\newcommand{\C}{{\mathbb{C}}}          % \C       = complexos
\newcommand{\N}{{\mathbb{N}}}          % \N       = naturais
\newcommand{\Proj}{{\mathbb{P}}}        % \Pro     = projectivo
\newcommand{\R}{{\mathbb{R}}}          % \R       = reais
\newcommand{\p}{{\mathfrak{p}}}       %
\newcommand{\sol}{{\mathfrak{o}}}
\newcommand{\hol}{{\mathfrak{hol}}}
\newcommand{\XIS}{{\mathfrak{X}}}
\newcommand{\rr}{\rightarrow}
\newcommand{\lrr}{\longrightarrow}
\newcommand{\calH}{{\cal H}}             %
\newcommand{\calL}{{\cal L}}             %
\newcommand{\calR}{{{\cal R}^\xi}}             %
\newcommand{\calV}{{\cal V}}             %
\newcommand{\gdois}{{\mathrm{G}_2}}
\newcommand{\SO}{{\mathrm{SO}}}
\newcommand{\SU}{{\mathrm{SU}}}
\newcommand{\na}{{\nabla}}
\newcommand{\tr}[1]{{\mathrm{tr}}\,{#1}}
\newcommand{\ric}{{\mathrm{ric}}}
\newcommand{\Scal}{{\mathrm{Scal}}}
\newcommand{\dx}{{\mathrm{d}}}
\newcommand{\inv}[1]{{#1}^{-1}}
\newcommand{\papa}[2]{\frac{\partial#1}{\partial#2}}
\newcommand{\gammadot}{{\dot{\gamma}}}
\newcommand{\gammadotdot}{{\ddot{\gamma}}}
\newcommand{\ydot}{{\dot{y}}}
\newcommand{\ydotdot}{{\ddot{y}}}
\newcommand{\ctildezero}{{\tilde{c}_0}}
\newcommand{\ctildeum}{{\tilde{c}_1}}
\newcommand{\mg}{{\mathrm{g}}}
\newtheorem{teo}{Theorem}[section]
\newtheorem{coro}{Corollary}[section]
\newtheorem{prop}{Proposition}[section]
\newenvironment{rema}[1][Remark.]{\begin{trivlist}
\item[\hskip \labelsep {\bfseries #1}]}{\end{trivlist}}
\def\cyclic{\mathop{\kern0.9ex{{+}
\kern-2.2ex\raise-.28ex\hbox{\Large\hbox{$\circlearrowright$}}}}\limits}
\title{On vector bundle manifolds with spherically 
\\
symmetric metrics}
\author{R. Albuquerque}
\begin{document}

%\begin{color}{DarkBlue}
%\begin{color}{black}

\maketitle

%\date{\today} %{1 dezembro 2009}

\markright{\sl\hfill  R. Albuquerque\hfill}

\begin{abstract}

We give a general description of the construction of weighted spherically symmetric metrics on vector bundle manifolds, i.e. the total space of a vector bundle $E\lrr M$, over a Riemannian manifold $M$, when $E$ is endowed with a metric connection. The tangent bundle of $E$ admits a canonical decomposition and thus it is possible to define an interesting class of two-weights metrics with the weight functions depending on the fibre norm of $E$; hence the generalized concept of spherically symmetric metrics. We study its main properties and curvature equations. Finally we focus on a few applications and compute the holonomy of Bryant-Salamon type $\gdois$ manifolds.

\end{abstract}

\vspace*{4mm}
\ \\
{\bf Key Words:} vector bundle, metric connection, spherically 
symmetric metric, holonomy, $\gdois$ manifold.
\ 
\\
{\bf MSC 2010:} (primary) 53C07, 53C25, 53C29; (secondary)  53C22, 53C55.

\vspace*{10mm}

\section*{Introduction}

We start this introduction by recalling three lines of independent research to which the present article intends to be related.

Important geometry has appeared in the last decades regarding the tangent bundle of a Riemannian manifold endowed with the metric found by S. Sasaki. As it is well-known, given a Riemannian manifold $M$, the metric on the total space of $\pi:TM\lrr M$ is defined by the canonical splitting of the tangent bundle of $TM$, via the Levi-Civita connection. The bundle projection $\pi$ becomes a Riemannian submersion. But one may also consider so-called weighted Sasaki metrics, which show even further intricate features, and then no canonical equations apply. Finding the curvature involves many computations, which are a step towards the deduction of the holonomy algebra.

In the geometry of complex and holomorphic Hermitian vector bundles $\pi:E\lrr M$ many interesting developments have resulted from the introduction, by S. Kobayashi and many others, of some special functions which vary with the fibre norm. It is quite interesting to observe the role of the zero-section in many theorems in \cite{Koba1}. If $M$ is K\"ahler, then $E$ becomes a Hermitian manifold in a natural way and also with freely chosen weights following the natural decomposition of $TE$.

In the search of real exceptional geometries, R. Bryant and S. Salamon have discovered complete $\gdois$ holonomy metrics on the total space of $E=\Lambda^2_-T^*M\lrr M$ where $M=S^4$ or $\C\Proj^2$. They used the natural decomposition of the tangent bundle of $E$ and two weight functions carefully chosen.

Now, on any vector bundle $\pi:E\lrr M$ over a Riemannian manifold $M$, endowed with a Riemannian metric structure, that is, a smooth section $g_{_E}$ of $S^2E^*$ non-degenerate and positive definite, and endowed with a compatible connection ${D}^{^E}$, one can equip the total space of $E$ with a Riemannian metric in the usual fashion. One may see this idea in general in few but varied contexts, of which the three themes above are example. Also we may refer the reader to \cite{BeleWei,BenLoubWood1,MurWals,Tapp} for pertinent aspects of such geometries, different from those addressed here. To the best of our knowledge, a theory considering metrics with weights is lacking in the literature.

In this article we study a subcase which is both general and most natural. Namely, the construction of Riemannian metrics $\mg_{_{M,E}}$ on the total space of $E\lrr M$ with the weights smoothly depending on the fibre squared norm. Our techniques have developed from our previous studies on fibre bundles, which in turn stem from the methods in \cite{Obri}. We shall be able to write $TE\simeq\pi^*TM\oplus{\pi^\star}E$, so we remark that throughout we need to distinguish the vertical lifts from the horizontal, and also to be permanently aware of the two pull-back structures $\pi^*TM$ and ${\pi^\star}E$, both being nothing else but the common pull-back by $\pi$.

In the second chapter we give several results on the curvature of the metric $\mg_{_{M,E}}$ with weights. In particular a computation which helps on the search for the global holonomy of the Riemannian manifold $E$ without actually determining it. Yet, how far are we still from knowing the global holonomy is a question that remains. Among our curvature results we give a criteria for Einsten metrics.

In the third chapter we give applications to Hermitian geometry, in the type of Sasaki type metrics and almost complex structures. Then we bring again to the front the $\gdois$ spaces of Bryant-Salamon. The results we started proving in \cite{Alb2014b} are finally taken to an end. They justify completely the previous efforts and the further particular details needed in the proof of a last theorem, which regards the real and complex hyperbolic spaces $\calH^4$ and $\calH_\C^2$, or any other negatively curved Einstein anti-self-dual manifold $M$. The theorem says that the disk bundles $D_{r_0,+}M$ contained in $\Lambda^2_+T^*M$ with essentially the Bryant-Salamon integrable $\gdois$ metric $\mg_\phi$, have indeed $\gdois$ holonomy. We also compute the holonomy of certain $\gdois$ metrics on $\Lambda^2_\pm T^*M$ for any K3 surface $M$ with the canonical metric.

The author is very grateful for the remarks and careful reading of the manuscript by an anonymous referee.

\section{A natural Riemannian structure}

\subsection{The metric on the manifold $E$ and its Levi-Civita connection}
\label{TmotmEaiLCc}

Let $M$ be a Riemannian manifold and $g_{_M}$ denote its metric tensor. Let $\pi:E\lrr M$ be a rank $k$ vector bundle over $M$. Our study considers as its main subject the manifold $E$. We also assume such vector bundle is endowed with a metric $g_{_E}$ and a compatible connection ${D}^{^E}$. A first equation is thus ${D}^{^E}g_{_E}=0$. The manifold $E$ has a canonical atlas of trivializations. Then the fibres $E_{\pi(e)}=\inv{\pi}(\pi(e))$, for each $e\in E$, have the natural structure of submanifolds, with tangent bundle the trivial bundle. Moreover we have an exact sequence $0\lrr\calV\lrr TE\stackrel{\dx\pi}{\lrr}\pi^* TM\lrr0$ of vector bundles over the \textit{manifold} $E$ and, by construction, the vertical or kernel bundle $\calV\lrr E$ identifies with ${\pi^\star}E\lrr E$. We then use the connection ${D}^{^E}$ to induce a splitting of $TE$ as $\calH^{{D}^{^E}}\oplus\calV$. Since $\calH^{{D}^{^E}}$ is canonically identified to the vector bundle $\pi^*TM$, through the restriction of the map $\dx\pi$, we may finally write
\begin{equation}\label{canonicaldecompo}
 \calH^{{D}^{^E}}\oplus\calV=TE\simeq \pi^*TM\oplus{\pi^\star}E\ .
\end{equation}
This canonical decomposition of the tangent bundle of $E$ has even further virtues. The terms horizontal and vertical used for the components of any tangent vector $X=X^h+X^v$ at each point $e\in E$ are defined accordingly. We also have a natural vector field $\xi$, a tautological section of vertical directions, defined by $\xi_e=e\in{\pi^\star}E$. The important role played by $\xi$ is shown through a projection onto $\calV$ with kernel $\calH^{{D}^{^E}}$:
\begin{equation}\label{projeccaoprincipal}
 {\pi^\star}{D}^{^E}_X\xi=X^v\ .
\end{equation}
To see this quickly, we may take a frame $(e_1,\ldots,e_k)$ of $E$ on an open set $U\subset M$. Then any point $e\in\inv{\pi}(U)\subset E$ is written uniquely as $e=\sum_{\alpha=1}^k y^\alpha e_\alpha$ and a vertical tangent vector to $E$ is written as $X=\sum_\beta x^\beta{\pi^\star}e_\beta$ ($y^\alpha,x^\beta\in\R$). Then $\dx\pi(X)=0$ and 
\[ {\pi^\star}{D}^{^E}_X\xi=\sum_{\alpha=1}^k\bigl(\dx y^\alpha(X){\pi^\star}e_\alpha+y^\alpha{\pi^\star}{D}^{^E}_X{\pi^\star}e_\alpha\bigr)
=\sum_{\alpha=1}^k x^\alpha{\pi^\star}e_\alpha=X  \ .   \]

On a chart of $M$ compatible with a trivialization of $E$ we can deduce the coordinate equations for a vector field in $\calH^{{D}^{^E}}$. On the other hand, we easily lift vector fields $X$ on $M$ to sections $X^h$ of $\pi^*TM$. These, given purely by differential geometry, are called the horizontal vector fields (the theory is indeed coherent, but the reader may see it with further detail in section \ref{somepropertiessection}).

Finally we introduce the metric structures in context. Clearly the manifold $E$ inherits a Riemannian structure $\pi^*g_{_M}\oplus{\pi^\star}g_{_E}$.
Letting $\na^{^M}$ denote the Levi-Civita connection of $M$, it is also clear that the connection ${D^{**}}=\pi^*\na^{^M}\oplus{\pi^\star}{D}^{^E}$ is a metric connection, i.e. ${D^{**}}(\pi^*g_{_M}\oplus{\pi^\star}g_{_E})=0$. Its torsion satisfies (throughout this article $X,Y,Z,W$ denote vector fields on the manifold $E$, if not stated differently elsewhere)
\begin{equation}\label{equacoesdatorsao}
\begin{cases}
  \dx\pi(T^{{D^{**}}}(X,Y))=\dx\pi ({D^{**}}_XY)-\dx\pi ({D^{**}}_YX)-\dx\pi([X,Y])=T^{\na^{^M}}(\dx\pi X,\dx\pi Y)=0 \\
   (T^{{D^{**}}}(X,Y))^v={\pi^\star}{D}^{^E}_XY^v-{\pi^\star}{D}^{^E}_YX^v-[X,Y]^v={\pi^\star}R^E(X,Y)\xi
\end{cases}.
\end{equation}
These two formulas are of the utmost importance and simple to prove; also they are similar to those found in \cite{Obri}. Recall that $R^E(X,Y)$ coupled with the metric of $E$ takes values in $\Lambda^2E^*$. Moreover, first by tensoriality and second by the previous formula, we have
\begin{eqnarray}
  {\pi^\star}R^E(X,Y)\xi &=& R^{{\pi^\star}D^{^E}}(X,Y)\xi   \nonumber \\ &=& {\pi^\star}R^E(X^h,Y^h)\xi  \nonumber \\ &=& 
  -[X^h,Y^h]^v\ .
\end{eqnarray}

Now we are much more interested in another metric defined on the manifold $E$ --- it is a metric arising as the above but with certain weight functions. First we consider the scalar function $r=\|\xi\|^2_{_E}$ defined on $E$, i.e. the squared radial-distance to the 0 section. Since $\xi$ is vertical, again by \eqref{projeccaoprincipal} we have
\begin{equation}\label{derivativeofr}
 \dx r=2\xi^\flat\ .
\end{equation}

The Riemannian manifold $E$ we wish to study in this article is defined by the metric
\begin{equation}\label{theweightedSasakimetric}
 \mg_{_{M,E}}=e^{2\varphi_1}\pi^*g_{_M}\oplus e^{2\varphi_2}{\pi^\star}g_{_E}
\end{equation}
where $\varphi_1,\varphi_2$ are smooth scalar functions on $E$ dependent only of $r$ and smooth at $r=0$ on the right, i.e. we assume $\varphi_i,\varphi_i',\varphi_i''\ldots$, $i=1,2$, exist and are continuous at 0. Notice we continue to assume \eqref{canonicaldecompo} implicitly. The map $\pi$ becomes a Riemannian submersion if and only if $\varphi_1=0$.
\begin{rema}
With $\varphi_1$ and any other functions of $r$ we use the notation 
$\varphi_1'=\papa{\varphi_1}{r}$.
\end{rema}

Clearly $D^{**}_X(e^{2\varphi_1}\pi^*g_{_M})=4\varphi_1'e^{2\varphi_1}\xi^\flat(X)\,\pi^*g_{_M}$, which is not so important but gives some clues to the following. Though it is quite easy to find metric connections for each summand of $\mg_{_{M,E}}$, we wish primarily to find a linear metric connection over $E$ keeping the same torsion of ${D^{**}}$. Hence we are led to consider ${\widetilde{D}}={D^{**}}+C$ with $C\in\Omega^0(S^2(T^*E)\otimes TE)$ given by ($a,b,c_1,c_2\in\Omega^0$ smooth functions of $r$)
\begin{equation}\label{formuladeC}
\begin{split}
 & C_XY\:=\: a\bigl(\xi^\flat(X)Y^h+\xi^\flat(Y)X^h\bigr)+\\
 &\ \ \qquad\qquad\qquad+c_1\langle X,Y\rangle_{_M}\xi+c_2\langle X,Y\rangle_{_E}\xi
 +b\bigl(\xi^\flat(X)Y^v+\xi^\flat(Y)X^v\bigr)\ .
 \end{split}
\end{equation}
We are using a common notation with brackets with obvious meaning and indexed when necessary. For instance, the notation $\langle X,Y\rangle_{_M}$ stands for $\pi^*g_{_M}(X^h,Y^h)$. Simple computations lead us to the following result, but along the following pages we shall continue with the $a,b,c$s above.
\begin{teo}\label{TeoremaformuladeCcoeficientes}
The linear connection ${\widetilde{D}}$ on the Riemannian manifold $E$ is a metric connection (${\widetilde{D}}\,\mg_{_{M,E}}=0$) if and only if
\begin{equation}\label{formuladeCcoeficientes}
 \begin{split}
   a=2\varphi_1'\qquad&\qquad c_1=-2\varphi_1'e^{2(\varphi_1-\varphi_2)}\\
 b=2\varphi_2'\qquad&\qquad c_2=-2\varphi_2'
 \end{split}\ .
\end{equation}
\end{teo}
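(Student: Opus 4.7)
The condition $\widetilde{D}\,\mathrm{g}_{M,E}=0$ is equivalent to
\begin{equation*}
(D^{**}_X \mathrm{g}_{M,E})(Y,Z)=\mathrm{g}_{M,E}(C_XY,Z)+\mathrm{g}_{M,E}(Y,C_XZ)
\end{equation*}
for all vector fields $X,Y,Z$ on $E$. My plan is to compute each side, read off bilinear identities, and then match coefficients by choosing test vectors with prescribed horizontal/vertical parts and alignment with $\xi$.

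\textbf{Step 1 (left-hand side).} Since ${D^{**}}(\pi^*g_{_M}\oplus{\pi^\star}g_{_E})=0$ by construction and $\varphi_i$ are functions of $r$ with $\dx r=2\xi^\flat$ by \eqref{derivativeofr}, the Leibniz rule applied to $\mathrm{g}_{M,E}=e^{2\varphi_1}\pi^*g_{_M}\oplus e^{2\varphi_2}{\pi^\star}g_{_E}$ yields
\begin{equation*}
(D^{**}_X \mathrm{g}_{M,E})(Y,Z)=4\varphi_1'e^{2\varphi_1}\xi^\flat(X)\langle Y,Z\rangle_{_M}+4\varphi_2'e^{2\varphi_2}\xi^\flat(X)\langle Y,Z\rangle_{_E}.
\end{equation*}

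\textbf{Step 2 (right-hand side).} Splitting $C_XY$ into its horizontal and vertical parts according to \eqref{formuladeC} and using $\langle\xi,\,\cdot\,\rangle_{_E}=\xi^\flat$ together with the fact that $\xi^\flat$ vanishes on horizontal vectors, a direct expansion gives
\begin{equation*}
\begin{split}
\mathrm{g}_{M,E}(C_XY,Z)+\mathrm{g}_{M,E}(Y,C_XZ)\:=\:& e^{2\varphi_1}\bigl[2a\,\xi^\flat(X)\langle Y,Z\rangle_{_M}+a\,\xi^\flat(Y)\langle X,Z\rangle_{_M}+a\,\xi^\flat(Z)\langle X,Y\rangle_{_M}\bigr]\\
&+e^{2\varphi_2}\bigl[2b\,\xi^\flat(X)\langle Y,Z\rangle_{_E}+b\,\xi^\flat(Y)\langle X,Z\rangle_{_E}+b\,\xi^\flat(Z)\langle X,Y\rangle_{_E}\\
&\quad+c_1\bigl(\xi^\flat(Y)\langle X,Z\rangle_{_M}+\xi^\flat(Z)\langle X,Y\rangle_{_M}\bigr)\\
&\quad+c_2\bigl(\xi^\flat(Y)\langle X,Z\rangle_{_E}+\xi^\flat(Z)\langle X,Y\rangle_{_E}\bigr)\bigr].
\end{split}
\end{equation*}

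\textbf{Step 3 (matching).} I would now specialize $X,Y,Z$ to isolate each coefficient. Taking $X$ vertical (with $\xi^\flat(X)\neq0$) and $Y,Z$ horizontal forces $\xi^\flat(Y)=\xi^\flat(Z)=0$ and $\langle Y,Z\rangle_{_E}=0$, leaving only the term $2ae^{2\varphi_1}\xi^\flat(X)\langle Y,Z\rangle_{_M}$, so $a=2\varphi_1'$. Taking $X=\xi$ and $Y,Z$ vertical perpendicular to $\xi$ kills every term except $2be^{2\varphi_2}r\langle Y,Z\rangle_{_E}$, giving $b=2\varphi_2'$. Then taking $X,Y$ horizontal and $Z$ vertical (so that the LHS is $0$) leaves the sole nonvanishing combination $\bigl(ae^{2\varphi_1}+c_1e^{2\varphi_2}\bigr)\xi^\flat(Z)\langle X,Y\rangle_{_M}$, which forces $c_1=-2\varphi_1'e^{2(\varphi_1-\varphi_2)}$. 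Finally, with $X$ vertical perpendicular to $\xi$, $Y=\xi$, and $Z$ vertical perpendicular to $\xi$, the LHS vanishes and the RHS reduces to $(b+c_2)\,r\,e^{2\varphi_2}\langle X,Z\rangle_{_E}$, yielding $c_2=-b=-2\varphi_2'$.

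\textbf{Sufficiency.} Substituting the four values of \eqref{formuladeCcoeficientes} back into the displayed identity of Step 2 and checking it against Step 1 is a routine verification: both sides then reduce to the same bilinear expression in each of the three symmetry types (H-H, V-V, mixed), so the equality holds for arbitrary $X,Y,Z$ by tensoriality. The main bookkeeping obstacle is keeping straight the two distinct pairings $\langle\cdot,\cdot\rangle_{_M}$ and $\langle\cdot,\cdot\rangle_{_E}$ and the fact that $\xi^\flat$ sees only the vertical component; once the test vectors are chosen to zero out all but one term, each coefficient is forced uniquely.
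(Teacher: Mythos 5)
Your proposal is correct and follows essentially the same route as the paper: both expand the metric-compatibility equation $\widetilde{D}\,\mg_{_{M,E}}=0$ using $D^{**}(\pi^*g_{_M}\oplus\pi^\star g_{_E})=0$ and $\dx r=2\xi^\flat$, and then extract the values of $a,b,c_1,c_2$ by matching the coefficients of the independent terms $\xi^\flat(X)\langle Y,Z\rangle_{_M}$, $\xi^\flat(Y)\langle X,Z\rangle_{_M}$, etc. (the paper phrases this as "six equations", you realize it with explicit horizontal/vertical test vectors, which is the same computation).
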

\begin{proof}
 Solving the equation ${\widetilde{D}}_X\mg_{_{M,E}}\,(Y,Z)=0$ is equivalent to 
 \begin{eqnarray*}
& 0\,=\, -4\varphi_1'e^{2\varphi_1}\xi^\flat(X)\langle Y,Z\rangle_{_M} -4\varphi_2'e^{2\varphi_2}\xi^\flat(X)\langle Y,Z\rangle_{_E}+ e^{2\varphi_1}\langle C_XY,Z\rangle_{_M}+\\
& +e^{2\varphi_1}\langle Y,C_XZ\rangle_{_M}+ e^{2\varphi_2}\langle C_XY,Z\rangle_{_E}+e^{2\varphi_2}\langle Y,C_XZ\rangle_{_E} =
\\ & = -4\varphi_1'e^{2\varphi_1}\xi^\flat(X)\langle Y,Z\rangle_{_M}+ e^{2\varphi_1}a(\xi^\flat(X)\langle Y,Z\rangle_{_M}+ \xi^\flat(Y)\langle X,Z\rangle_{_M}) + \\ & +e^{2\varphi_1}a(\xi^\flat(X)\langle Y,Z\rangle_{_M}
+ \xi^\flat(Z)\langle X,Y\rangle_{_M}) 
-4\varphi_2'e^{2\varphi_2}\xi^\flat(X)\langle Y,Z\rangle_{_E}+ e^{2\varphi_2}(c_1\langle X,Y\rangle_{_M}\xi^\flat(Z)+\\ &
 +c_2\langle X,Y\rangle_{_E}\xi^\flat(Z)+ c_1\langle X,Z\rangle_{_M}\xi^\flat(Y)+ c_2\langle X,Z\rangle_{_E}\xi^\flat(Y))+\\ &
 +e^{2\varphi_2}b(2\xi^\flat(X)\langle Y,Z\rangle_{_E}+\xi^\flat(Y)\langle X,Z\rangle_{_E}+ \xi^\flat(Z)\langle X,Y\rangle_{_E})\ .
 \end{eqnarray*}
Since this is valid for all vectors, we find six equations which then yield (\ref{formuladeCcoeficientes}).
\end{proof}
Since $C$ is symmetric, we still have $T^{{\widetilde{D}}}=T^{{D^{**}}}={\pi^\star}R^E(\ ,\ )\xi$. We are also going to abbreviate the notation for this last $\calV$-valued tensor: we let $\calR={\pi^\star}R^E(\ ,\ )\xi$.

Finally, the Levi-Civita connection of the metric $\mg_{_{M,E}}$ is the connection $\na^{^{M,E}}$ given by
\begin{equation}\label{theLeviCivitaconnection}
  \na^{^{M,E}}_XY=D^{**}_XY+C_XY+A_XY-\frac{1}{2}\calR(X,Y)
\end{equation}
with $C$ defined in (\ref{formuladeC},\ref{formuladeCcoeficientes}) and the $\pi^*TM$-valued 2-tensor $A$ defined by
\begin{equation}
 e^{2\varphi_1}\langle A(X,Y),Z\rangle_{_M}=\frac{e^{2\varphi_2}}{2}
 \bigl(\langle\calR(X,Z),Y\rangle_{_E}+\langle\calR(Y,Z),X\rangle_{_E}\bigr)\ .
\end{equation}
Notice $A$ is symmetric, so now we have $T^{\na^{^{M,E}}}=0$. On the other hand, since ${\widetilde{D}}=D^{**}+C$ is a metric connection, we just have to verify, which is very easy, that
\begin{equation}
 \mg_{_{M,E}}(A(X,Y)-\frac{1}{2}\calR(X,Y),Z) 
 =-\mg_{_{M,E}}(Y,A(X,Z)-\frac{1}{2}\calR(X,Z)) \ . 
\end{equation}
We note the formula $A(X,Y)=A_XY=A(X^h,Y^v)+A(X^v,Y^h)$ and stress that $A$ takes values in the horizontal distribution.

\subsection{Parallel vector fields and isometries of $\mg_{_{M,E}}$}
\label{somepropertiessection}

A first problem we would like to discuss regards the description of the parallel vector fields of $E$. Let us deduce their equations on a trivializing subset.

We let $x=(x^1,\ldots,x^m)$ be a chart of the base defined on an open subset $U\subset M$ ($\dim M=m$). If necessary restricting to a smaller open subset, we may take an orthonormal frame $\{e_1,\ldots,e_k\}$ of $E$ on $U$. Hence we have a trivialization $\inv{\pi}(U)\simeq U\times\R^k$ with coordinates $(x,y)$, linear on the fibres by assumption. Since any point $e\in\inv{\pi}(x)$ may be written as $e=\sum_\alpha y^\alpha e_\alpha$, the tautological vector field $\xi$ satisfies $\xi_e=\sum_\alpha y^\alpha{\pi^\star}e_\alpha$. We have $r=\sum_\alpha(y^\alpha)^2$ and denote $g_{_M}(\partial_i, \partial_j)=g_{ij}$, where the vectors $\partial_i=\papa{ }{x^i}$ denote the duals of the $\dx x^j$. This has inverse matrix $g^{jq}$. We also let $\pi^*\partial_i$ denote the lift of $\partial_i$ to the \textit{horizontal} part of $TE$. Below, Kronecker and Christoffel symbols have usual expression. The second are defined by $\na^M_{\partial_i}\partial_j=\Gamma_{ij}^{M,h}\partial_h$ and $D^{E}_{\partial_i}e_\alpha=\Gamma_{i\alpha}^{E,\beta}e_\beta$. Throughout indices satisfy $1\leq i,j,q,l\leq m$ and $1\leq\alpha,\beta,\epsilon\leq k$, and Einstein summation convention is assumed. For the curvature tensor we denote $R_{\beta\alpha ij}^E=\langle R^E(\partial_i,\partial_j)e_\alpha,e_\beta\rangle_{_E}$.

Note that the $\partial_i={\partial_i}_{(x,y)}$ also make sense in $\inv{\pi}(U)$, but such vector fields are not horizontal in general. It is easy to see, applying \eqref{projeccaoprincipal}, that
\begin{equation}\label{decompositionofchartvectorfields}
 \pi^*\partial_i=\partial_i-y^\alpha\Gamma_{i\alpha}^{E,\beta}{\pi^\star}e_\beta\ .
\end{equation}
Notice $\pi^*g_{_M}(\pi^*\partial_i,\pi^*\partial_j)=g_{ij}$ and ${\pi^\star}g_{{_E}}({\pi^\star}e_\alpha,{\pi^\star}e_\beta)=\delta_\alpha^\beta$. Henceforth, by \eqref{decompositionofchartvectorfields}, we find $\mg_{_{M,E}}(\partial_i,\partial_j)=e^{2\varphi_1}g_{ij}+e^{2\varphi_2}y^\alpha y^\gamma\Gamma_{i\alpha}^{E,\beta}\Gamma_{j\gamma}^{E,\beta}$.
It is interesting to observe that we do not have to make use of \eqref{decompositionofchartvectorfields} in the next deductions.

Following the orthogonal decomposition \eqref{canonicaldecompo}, any vector field on $E$ is written as $Y=Y^j\pi^*\partial_j+B^\alpha{\pi^\star}e_\alpha$. Then we may develop four equations for $\na^{^{M,E}}Y$ of different kind:
\begin{eqnarray}\label{equationofparallel}
  (\na^{^{M,E}}_{\pi^*\partial_i}Y)^q &=&   \papa{Y^q}{x^i}+Y^l\Gamma_{il}^{M,q}+ay^\alpha B^\alpha\delta_i^q+
  \frac{e^{2(\varphi_2-\varphi_1)}}{2}y^\alpha B^\beta R_{\beta\alpha ij}^Eg^{jq}  
  \label{equationofparallel1}\\
  (\na^{^{M,E}}_{{\pi^\star}e_\beta}Y)^q &=& \papa{Y^q}{y^\beta}+ay^\beta Y^q+
  \frac{e^{2(\varphi_2-\varphi_1)}}{2}y^\alpha Y^j R_{\beta\alpha jl}^E g^{lq}  \label{equationofparallel2}\\
  (\na^{^{M,E}}_{\pi^*\partial_i}Y)^\alpha &=&  \papa{B^\alpha}{x^i}+B^\beta\Gamma_{i\beta}^{E,\alpha}+c_1y^\alpha Y^jg_{ij}-\frac{1}{2}Y^jy^\beta R^E_{\alpha\beta ij}  \label{equationofparallel3}\\
  (\na^{^{M,E}}_{{\pi^\star}e_\beta}Y)^\alpha &=& \papa{B^\alpha}{y^\beta}+c_2B^\beta y^\alpha + by^\beta B^\alpha+by^\epsilon B^\epsilon\delta_\alpha^\beta  \label{equationofparallel4}
\end{eqnarray}
As the reader may agree, finding a germ of a parallel vector field in general is quite non-trivial even if we require $Y$ to be horizontal or to be vertical.
\begin{prop}
Assume the weight functions $\varphi_1=\varphi_1(r),\ \varphi_2=\varphi_2(r)$ are constant.\\
(i)\ \,The only horizontal parallel vector fields $Y$ on the manifold $E$ are the horizontal lifts of parallel vector fields $Y_0$ of $M$ for which $R^E(Y_0,\ )=0$.\\
(ii)\ \,Likewise, the only vertical parallel vector fields on $E$ are the vertical lifts of parallel sections of $\pi:E\lrr M$.
\end{prop}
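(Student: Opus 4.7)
The key is to exploit the fact that with $\varphi_1,\varphi_2$ constant we have $\varphi_1' = \varphi_2' = 0$, whence by Theorem~\ref{TeoremaformuladeCcoeficientes} the four coefficients in \eqref{formuladeCcoeficientes} all vanish: $a = b = c_1 = c_2 = 0$. The coordinate equations \eqref{equationofparallel1}--\eqref{equationofparallel4} for the parallelism of a general vector field $Y = Y^j \pi^*\partial_j + B^\alpha {\pi^\star}e_\alpha$ then retain only their base-derivative parts and their curvature terms. The plan is to examine the horizontal and vertical cases separately and, in each case, to use these simplified equations to force $Y$ to be the lift of an object on $M$, and then to read off what parallelism on $M$ plus a curvature condition on $R^E$ must say.

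For part (i), set $B^\alpha = 0$. The first step is to probe \eqref{equationofparallel2} in the radial fibre direction: contracting its right-hand side against $y^\beta$ produces the factor $y^\alpha y^\beta R^E_{\beta\alpha jl}$, which vanishes because $R^E$ is the curvature of a metric connection on $E$ and hence antisymmetric in the fibre indices $\alpha,\beta$. Thus the Euler derivative $y^\beta\papa{Y^q}{y^\beta}$ is zero and $Y^q$ is constant along every fibre ray through the zero-section; consequently $Y^q(x,y) = Y_0^q(x)$ depends on $x$ alone. Equation \eqref{equationofparallel1} then reduces to $\na^{^M} Y_0 = 0$, and differentiating \eqref{equationofparallel3} in $y^\gamma$ yields $Y_0^j R^E_{\alpha\gamma ij} = 0$, which is precisely $R^E(Y_0,\,\cdot\,) = 0$. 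The converse, that the horizontal lift of such a $Y_0$ is $\na^{^{M,E}}$-parallel, is then immediate upon substituting back into all four equations.

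For part (ii), set $Y^j = 0$. Equation \eqref{equationofparallel4} collapses at once to $\papa{B^\alpha}{y^\beta} = 0$, so $B = B(x)$ is the pull-back of a section of $E$; equation \eqref{equationofparallel3} then becomes ${D}^{^E} B = 0$, so $B$ is a parallel section. The remaining condition coming from \eqref{equationofparallel1}, namely $y^\alpha B^\beta R^E_{\beta\alpha ij} = 0$, is equivalent after differentiation in $y$ to $R^E B = 0$, which is automatic via the Ricci identity applied to the parallel section $B$. The converse is again a direct substitution.

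The only nontrivial point is the radial-constancy argument in part (i); it rests entirely on the metric skew-symmetry of $R^E$ in its fibre indices and would fail for a generic, non-metric connection. All other steps reduce either to a first-order ODE in $y$ with vanishing right-hand side (yielding $y$-independence) or to a first-order PDE on $M$ expressing $\na^{^M}$- or ${D}^{^E}$-parallelism.
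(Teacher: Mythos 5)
Your proof is correct and takes essentially the same route as the paper: with $a=b=c_1=c_2=0$ you read everything off the simplified coordinate equations \eqref{equationofparallel1}--\eqref{equationofparallel4}, which is exactly what the paper does. The only (harmless) variation is in part (i): the paper first deduces $R^E(Y,\ )=0$ from \eqref{equationofparallel3} and uses it to annihilate the curvature term in \eqref{equationofparallel2}, whereas you obtain the $y$-independence of the $Y^q$ by contracting \eqref{equationofparallel2} with $y^\beta$ and invoking the skew-symmetry of $R^E$ in the fibre indices, reading off $R^E(Y_0,\ )=0$ only afterwards --- both orderings are valid.
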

\begin{proof}
By hypothesis $a=b=c_1=c_2=0$. First suppose all $B^\alpha=0$ and $\na^{^{M,E}}Y=0$. Then \eqref{equationofparallel3} implies $R^E(Y,\ )=0$. By \eqref{equationofparallel2}, we have $Y^q$ independent of the $y^\beta$, so the vector field is a pull-back: $Y=\pi^*Y_0$. The result that $Y_0$ is $\na^{^M}$-parallel then follows by equation \eqref{equationofparallel1}. Suppose now that all the $Y^j=0$, i.e. $Y$ is vertical. The last equation in the list shows the field $Y$ arises as a vertical pull-back of a section $e$ of $E\lrr M$, i.e. $Y={\pi^\star}e$. The third equation shows that $e$ is ${D}^{^E}$-parallel. Now we conclude from \eqref{equationofparallel1} that in all points of $M$ we must have $R^E(\ ,\ )e=0$. But this is automatic for any parallel section.
\end{proof}
\begin{prop}\label{covaderivativeofxi}
 For all $X\in TE$ we have \,$\na^{^{M,E}}_X\xi=arX^h+(1+br)X^v$.
\end{prop}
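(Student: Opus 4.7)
The plan is to substitute $Y=\xi$ directly into the Levi-Civita formula \eqref{theLeviCivitaconnection} and evaluate each of the four pieces using what $\xi$ tautologically is, namely a purely vertical vector with $\xi^v=\xi$, $\xi^h=0$, $\langle\xi,\xi\rangle_{_E}=r$, and $\xi^\flat(X)=\langle X,\xi\rangle_{_E}$ (in particular $\langle X,\xi\rangle_{_M}=0$ for every $X$). The key observation that drives everything is the projection identity \eqref{projeccaoprincipal}: $D^{**}_X\xi={\pi^\star}{D}^{^E}_X\xi=X^v$. So the ``main'' term of $\nabla^{^{M,E}}_X\xi$ already contributes $X^v$; the rest of the work is to show that the correction tensors $C$, $A$ and $\calR$ assemble to the extra $arX^h+brX^v$.

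Next I would expand $C_X\xi$ using \eqref{formuladeC}. Because $\xi^h=0$ and $\langle X,\xi\rangle_{_M}=0$, only four of the six terms survive, yielding
\begin{equation*}
C_X\xi\,=\,a\,\xi^\flat(\xi)X^h+c_2\xi^\flat(X)\xi+b\,\xi^\flat(X)\xi+b\,\xi^\flat(\xi)X^v
\,=\,arX^h+brX^v+(b+c_2)\xi^\flat(X)\,\xi.
\end{equation*}
At this point the crucial algebraic coincidence enters: by Theorem \ref{TeoremaformuladeCcoeficientes} one has $b=2\varphi_2'$ and $c_2=-2\varphi_2'$, so $b+c_2=0$ and the stray $\xi^\flat(X)\xi$ term disappears, leaving exactly $arX^h+brX^v$.

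It remains to see that $A_X\xi$ and $\calR(X,\xi)$ both vanish. For $\calR$, tensoriality gives $\calR(X,\xi)={\pi^\star}R^E(X^h,\xi^h)\xi=0$ since $\xi^h=0$. For $A$, I would test it against an arbitrary $Z$ using its defining equation: the first term on the right-hand side, $\langle\calR(X,Z),\xi\rangle_{_E}$, vanishes because the metric connection property ${D}^{^E}g_{_E}=0$ forces $R^E(\cdot,\cdot)$ to be skew with respect to $g_{_E}$, hence $\pi^\star g_{_E}(R^E(X,Z)\xi,\xi)=0$; the second term $\langle\calR(\xi,Z),X\rangle_{_E}$ vanishes again because $\xi^h=0$. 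Thus $A_X\xi=0$.

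Putting the four pieces together,
\begin{equation*}
\nabla^{^{M,E}}_X\xi\,=\,D^{**}_X\xi+C_X\xi+A_X\xi-\tfrac12\calR(X,\xi)\,=\,X^v+arX^h+brX^v\,=\,arX^h+(1+br)X^v,
\end{equation*}
which is the claimed formula. There is no real obstacle here: the computation is essentially bookkeeping; the only non-routine step is noticing that the relation $b+c_2=0$ forced by Theorem \ref{TeoremaformuladeCcoeficientes} is precisely what kills the ``rogue'' $\xi^\flat(X)\xi$ contribution coming from $C$, and that the vanishings of $A_X\xi$ and $\calR(X,\xi)$ both hinge on the single fact that $\xi$ is vertical together with the metric compatibility of ${D}^{^E}$.
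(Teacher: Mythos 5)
Your proposal is correct and follows essentially the same route as the paper: the paper's own proof simply notes that $A_X\xi=0=\calR(X,\xi)$ and $b=-c_2$, which are exactly the three facts you verify, with your extra bookkeeping of $D^{**}_X\xi=X^v$ and the expansion of $C_X\xi$ being the same computation left implicit there.
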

\begin{proof}
 Since for all $X\in TE$ we have $A_X\xi=0=\calR(X,\xi)$ and $b=-c_2$, the result follows.
\end{proof}
We may also consider the search for parallel vector fields of the form 
\begin{equation}
Y=f\pi^*Y_0+g{\pi^\star}e+h\xi\ ,
\end{equation}
where $f,g,h$ are functions of $r$ and $Y_0$ and $e$ are sections on $M$, parallel for $\na^{^M}$ and ${D}^{^E}$ respectively. In particular, sections of constant norm.
\begin{teo}\label{conditionsforparallelvectorfields}
Suppose $Y\neq0$.\\
(i)\,\ For $k>1$, the vector field $Y$ is $\na^{^{M,E}}$-parallel if and only if $h=\varphi_1'=f'=0$,\, $fR^E(Y_0,\ )=0$ and also if $\varphi_2'=g'=0$ when $g{\pi^\star}e\neq0$.\\
(ii)\,\ For $k=1$, the vector field $Y$ is $\na^{^{M,E}}$-parallel if and only if

(a) either $\varphi_1'\neq0$ and $f\pi^*Y_0=0$ with $g\|e\|+\sqrt{r}h=0$, or

(b) $\varphi_1'=f'=0$ and $g\|e\|+\sqrt{r}h=ce^{-\varphi_2}$ for some constant $c$.
\end{teo}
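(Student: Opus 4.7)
The plan is to impose $\na^{^{M,E}}_X Y=0$ for all $X\in TE$ by applying the Levi-Civita formula \eqref{theLeviCivitaconnection} to the ansatz $Y=f\pi^*Y_0+g{\pi^\star}e+h\xi$, and testing successively with $X=\xi$, horizontal $X=\pi^*Y_1$, and vertical $X={\pi^\star}e$. Several simplifications apply throughout: $D^{**}_X\pi^*Y_0=0$ and $D^{**}_X{\pi^\star}e=0$ since $Y_0,e$ are parallel, $D^{**}_X\xi=X^v$ by \eqref{projeccaoprincipal}, and $X(f)=2f'\xi^\flat(X)$ by \eqref{derivativeofr}. Moreover, a parallel section $e$ satisfies $R^E(\cdot,\cdot)e=0$; by the skew-symmetry of $R^E$ with respect to $g_{_E}$ one obtains $\langle R^E(\cdot,\cdot)\xi,e\rangle_{_E}=0$, and in particular $A(\cdot,{\pi^\star}e)=0$.

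Testing first with $X=\xi$, Proposition \ref{covaderivativeofxi} together with $A_\xi=0$ and $\calR(\xi,\cdot)=0$ gives
\begin{equation*}
\na^{^{M,E}}_\xi Y=2r(f'+f\varphi_1')\pi^*Y_0+2r(g'+g\varphi_2'){\pi^\star}e+\bigl(2rh'+h(1+2r\varphi_2')\bigr)\xi.
\end{equation*}
For $k>1$ the three summands are generically linearly independent, so each coefficient vanishes: the $\xi$-equation is the ODE $(h\sqrt{r}\,e^{\varphi_2})'=0$ whose only smooth solution at $r=0$ is $h=0$, while the other two give $f'+f\varphi_1'=0$ and $g'+g\varphi_2'=0$. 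Testing next with horizontal $X=\pi^*Y_1$, all $D^{**}_X$ derivatives vanish and the vertical part of $\na^{^{M,E}}_X Y=0$ becomes $f\bigl[c_1\langle Y_1,Y_0\rangle_{_M}\xi-\tfrac12 R^E(Y_1,Y_0)\xi\bigr]=0$; splitting in the symmetric and skew parts of $(Y_0,Y_1)$ forces $c_1 f=0$ and $fR^E(Y_0,\cdot)=0$. The horizontal part, using $A(\cdot,{\pi^\star}e)=0$, reduces to $2\varphi_1'(g\rho+hr)\pi^*Y_1=0$ with $\rho=\langle\xi,{\pi^\star}e\rangle_{_E}$; as $\rho$ and $r$ vary independently over $E$, this gives $\varphi_1' g=\varphi_1' h=0$, so whenever $Y\neq 0$ we deduce $\varphi_1'=0$ and thus $f'=0$. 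Finally, testing with $X={\pi^\star}e$ and substituting the above, the ${\pi^\star}e$- and $\xi$-coefficients are $2\rho(g'+2g\varphi_2')$ and $-2\varphi_2'g\|e\|^2$; combined with $g'+g\varphi_2'=0$ these force $\varphi_2'=g'=0$ whenever $g{\pi^\star}e\neq 0$, proving (i).

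For (ii), in a rank-one bundle the existence of a parallel section $e$ forces $R^E=0$, and $\xi=t\,{\pi^\star}e$ with $r=t^2\|e\|^2$, so the vertical part of $Y$ collapses to $(g+th){\pi^\star}e$ and the parameters $\rho,r$ are linked by $\rho=t\|e\|^2$. The step-two analysis then produces only the two scalar equations $\varphi_1'f=0$ and $\varphi_1'(g+th)=0$: if $\varphi_1'\neq 0$ these are exactly $f\pi^*Y_0=0$ and $g\|e\|+\sqrt{r}h=0$, giving (a). If $\varphi_1'=0$, step one gives $f'=0$ and, using $\xi=t{\pi^\star}e$, merges the ${\pi^\star}e$- and $\xi$-components of $\na^{^{M,E}}_\xi Y=0$ into a single first-order ODE in $t$ for $G=g+th$ equivalent to the vanishing of the $t$-derivative of $Ge^{\varphi_2}$; integration yields $g\|e\|+\sqrt{r}h=c\,e^{-\varphi_2}$, which is (b). The main obstacle is the careful bookkeeping of horizontal and vertical components in $C_X Y$ and $A_X Y$, and, for $k=1$, recognizing that the loss of independence between $\rho$ and $r$ forces the conclusion to be phrased in terms of the composite invariant $g\|e\|+\sqrt{r}h$ rather than $g$ and $h$ separately.
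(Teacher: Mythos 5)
Your proposal is correct and follows essentially the same route as the paper: expand $\na^{^{M,E}}_XY$ via \eqref{theLeviCivitaconnection} using $A_X\xi=0$, $\calR(\,\cdot\,,\xi)=0$ and $R^E(\,\cdot\,,\,\cdot\,)e=0$, then read off the vanishing conditions in horizontal and vertical test directions, treating the rank-one case through the collapsed quantity $g\|e\|+\sqrt{r}h$ exactly as the paper does with $\psi$. The only cosmetic differences are organizational — you test with $\xi$, $\pi^*Y_1$, ${\pi^\star}e$ instead of projecting the paper's two general formulas, and you get $h=0$ from smoothness of $h\sqrt{r}\,e^{\varphi_2}$ at $r=0$ where the paper excludes $1+2r\varphi_2'=0$ via the unboundedness of $\varphi_2$ — the same regularity-at-the-zero-section idea.
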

\begin{proof}
 We indicate most of the steps. Of course here $g$ denotes a function; not the metric. For functions dependent of $r$, such as $f$, we have $\dx f=2f'\xi^\flat$. Let us denote $\hat{e}={\pi^\star}e$ and simply by $Y_0^h$ the horizontal lift $\pi^*Y_0$. Then, by the hypothesis, the following vanish in all directions $X^h,X^v$:
 \begin{eqnarray*}
 & & \na^{^{M,E}}_{X^h}Y \ =\  ag\xi^\flat(\hat{e})X^h+c_1f\langle X^h,Y_0^h\rangle_{_M}\xi -\frac{1}{2}f\calR(X^h,Y_0^h)+gA_{X^h}\hat{e}+arhX^h
 \end{eqnarray*}
and
\begin{eqnarray*}
 & & \na^{^{M,E}}_{X^v}Y\ =\ 2f'\xi^\flat(X^v)Y_0^h+af\langle\xi,X^v\rangle Y_0^h+fA_{X^v}Y_0^h +2g'\xi^\flat(X^v)\hat{e}+\\
 & &\qquad\qquad \ \ \ \  +gc_2\langle X^v,\hat{e}\rangle\xi+ gb(\xi^\flat(X^v)\hat{e}+\xi^\flat(\hat{e})X^v)+ 2h'\xi^\flat(X^v)\xi+h(1+rb)X^v\ .
\end{eqnarray*}
Now $\langle \na^{^{M,E}}_{X^h}Y,X^h\rangle_{_M}=(ag\xi^\flat(\hat{e})+arh)\|X^h\|^2=0$. For $k>1$ we may take $\xi$, with any norm, orthogonal to $\hat{e}$, so the equation implies $a=0$ or $(h,g)=(0,0)$. If $a\neq0$, then $\varphi_1'\neq0$ and then $\langle \na^{^{M,E}}_{X^h}Y,\xi\rangle_{_E}=c_1f\langle X^h,Y_0^h\rangle_{_M}\|\xi\|^2=0$ implies $fY_0^h=0$ and thus $Y=0$. So we assume $k>1$ and $a=c_1=0$. Then we are left with the vertical and horizontal parts $fR^E(Y_0,\ )=0$ and $gA_{X^h}\hat{e}=0$. The former implies $A_{X^v}Y_0^h=0$, while the latter is equivalent to $gR^E(\ ,\ )\hat{e}=0$, which is satisfied automatically because $e$ is parallel. On the other hand, $\langle \na^{^{M,E}}_{X^v}Y,Y_0^h\rangle=2f'\xi^\flat(X^v)\langle Y_0,Y_0\rangle_{_M}=0$, as it is immediate to see, hence $f'=0$.

Since $k>1$, we then may take $X^v=\hat{e}\perp\xi$, which yields $gc_2\|\hat{e}\|^2\xi+h(1+rb)\hat{e}=0$. These two summands must vanish and if $g\hat{e}\neq0$, then $c_2=-b=0=h$ and $g'=0$. If $g\hat{e}=0$, then because $e$ has constant norm we may assume $g=0$. Now the original equation, resumes again to $h(1+rb)=0$. Integrating $1+r2\varphi_2'=0$ gives $\varphi_2(r)=-\frac{1}{2}\log r+\frac{1}{2}\log c$, with $c>0$ a constant. Then $\varphi_2$ is not bounded at $0$. So we must have $h=0$ instead. And there is nothing left to check.

Now let us see the case $k=1$, where the curvature tensor vanishes: $R^E=0$ by trivial reason. Since $e$ is ${D}^{^E}$-parallel, we may already assume it has norm 1. Then $\xi,\hat{e},X^v$ are collinear, so we may write $X^v=\hat{e}$ and $\xi=\sqrt{r}\hat{e}$. In other words, $\xi^\flat(\hat{e})=\sqrt{r}$. The equation for $\na^{^{M,E}}_{X^h}Y$ yields
$ag\sqrt{r}X^h+c_1f\langle X^h,Y_0^h\rangle\xi+arhX^h=0$. Hence $c_1fY_0=0$ and $a(g\sqrt{r}+rh)=0$. The equation for $\na^{^{M,E}}_{X^v}Y$ gives $2f'\sqrt{r}Y_0+af\sqrt{r}Y_0=0$ and
\[ 2g'\sqrt{r}+gc_2\sqrt{r}+2gb\sqrt{r}+2h'r+h+rbh=0\ .\]
Since $b=-c_2=2\varphi_2'$, this is equivalent to
\[  2\varphi_2'(g\sqrt{r}+rh)+2g'\sqrt{r}+2h'r+h=0 \ .\qquad(*)\]
Of course this has to simplify further. Letting $\psi=g+\sqrt{r}h$, then ($*$) is equivalent to $\varphi_2'\psi+\psi'=0$.
If $c_1\neq0$, then $fY_0=0$ and $a\neq0$ and so $f'=0$ and $g\sqrt{r}+rh=0$, this is $\psi=0$. Finally, if $c_1=0$ then $a=0$ and the equations yield $f'=0$ and $\psi=ce^{-\varphi_2}$ for some constant $c$.
\end{proof}

Regarding the more general equation for a Killing vector field, i.e. a vector field $X$ such that the tensor field $\calL_X{\mg_{_{M,E}}}$ vanishes identically, equivalently, such that
\begin{equation}
 \mg_{_{M,E}}(\na^{^{M,E}}_YX,Z)+\mg_{_{M,E}}(Y,\na^{^{M,E}}_ZX)=0,\ \forall Y,Z\in\XIS(E)\ ,
\end{equation}
we cannot go much farther. We find such Lie derivative to be equal to (the meaning of $\calL_{X^h}{\pi^*g_{_M}}$ and $\calL_{X^v}{\pi^*g_{_E}}$ being analogous)
\begin{equation}
\begin{split}
  e^{2\varphi_1}(\calL_{X^h}{\pi^*g_{_M}})(Y,Z)+ e^{2\varphi_2}(\calL_{X^v}{{\pi^\star}g_{_E}})(Y,Z)
 +2a e^{2\varphi_1}\xi^\flat(X)\langle Y,Z\rangle_{_M}+\qquad\qquad\\
 +2b e^{2\varphi_2}\xi^\flat(X)\langle Y,Z\rangle_{_E} + e^{2\varphi_2}\langle\calR(X,Z),Y\rangle_{_E}+e^{2\varphi_2}\langle\calR(X,Y),Z\rangle_{_E}
\end{split}\ .
\end{equation}

Although infinitesimal isometries of the space $E$ are indeed difficult to describe, we have the following quite immediate construction. Suppose we have another Riemannian manifold $M_1$ together with a vector bundle $E_1\lrr M_1$ endowed with a metric structure and metric connection $D^{^{E_1}}$. Suppose also we have a parallel vector bundle isometry $\hat{f}$ along an isometry $f$ of the base manifolds:
\begin{equation}
 \begin{array}{rcl}
  E &\stackrel{\hat{f}}{\lrr}&E_1 \\
  \pi\downarrow & &\downarrow\pi_1\\
  M&\stackrel{f}{\lrr} & M_1
 \end{array}\ \ .
\end{equation}
We recall parallel morphism means that $f^*D^{^{E_1}}\circ\hat{f}=\hat{f}\circ D^{^E}$.
\begin{teo}
 In the above conditions, for the given same pair of functions $\varphi_1,\varphi_2$ on the radius of $E$ and $E_1$, the map $\hat{f}:(E,\mg_{_{M,E}})\lrr(E_1,\mg_{_{M_1,E_1}})$ is an isometry.
\end{teo}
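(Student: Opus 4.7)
The plan is to verify the isometry statement by decomposing everything along the canonical horizontal--vertical splitting \eqref{canonicaldecompo} on both $E$ and $E_1$, and checking each summand separately.

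First I would handle the vertical directions. Since $\hat f$ covers $f$, the identity $\pi_1\circ\hat f=f\circ\pi$ yields $\dx\pi_1\circ\dx\hat f=\dx f\circ\dx\pi$, so $\dx\hat f$ sends the kernel of $\dx\pi$ into the kernel of $\dx\pi_1$, i.e.\ $\dx\hat f(\calV)\subset\calV_1$. Under the canonical identifications $\calV\simeq{\pi^\star}E$ and $\calV_1\simeq{\pi_1^\star}E_1$, this restriction is nothing but the fibrewise linear map $\hat f|_{E_x}\colon E_x\to E_{1,f(x)}$, which is a linear isometry by hypothesis. In particular, the fibre squared norm is preserved, $r_1\circ\hat f=r$, hence $\varphi_i(r_1)\circ\hat f=\varphi_i(r)$ for $i=1,2$, so the weight functions match up automatically.

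Next I would show that $\dx\hat f$ preserves the horizontal distribution, i.e.\ $\dx\hat f(\calH^{{D}^{^E}})\subset\calH^{{D}^{^{E_1}}}$. For this I would recall the characterization: a tangent vector at $e=s(x)\in E$ of the form $\dx s(v)$, with $s$ a local section of $E$ and $v\in T_xM$, is horizontal if and only if ${D}^{^E}_v s=0$. Given such $s$ and $v$, define $s_1:=\hat f\circ s\circ\inv{f}$, a local section of $E_1$ near $f(x)$ taking value $\hat f(e)$ there. By the chain rule,
\[ \dx\hat f\bigl(\dx s(v)\bigr)=\dx s_1(f_*v), \]
and by the parallel morphism assumption $f^*{D}^{^{E_1}}\circ\hat f=\hat f\circ{D}^{^E}$ we get ${D}^{^{E_1}}_{f_*v}s_1=\hat f({D}^{^E}_v s)=0$. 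Hence $\dx\hat f(\dx s(v))$ is horizontal in $E_1$. Since $\dx\hat f$ is bijective and already identifies verticals with verticals, it must carry $\calH^{{D}^{^E}}$ isomorphically onto $\calH^{{D}^{^{E_1}}}$; under the identifications $\calH^{{D}^{^E}}\simeq\pi^*TM$ and $\calH^{{D}^{^{E_1}}}\simeq\pi_1^*TM_1$, the restriction is just $f_*$.

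Finally I would assemble the metric computation. On horizontal vectors,
\[ \hat f^*(e^{2\varphi_1(r_1)}\pi_1^*g_{_{M_1}})=e^{2\varphi_1(r)}\pi^*(f^*g_{_{M_1}})=e^{2\varphi_1(r)}\pi^*g_{_M}, \]
since $f$ is an isometry; on vertical vectors,
\[ \hat f^*(e^{2\varphi_2(r_1)}{\pi_1^\star}g_{_{E_1}})=e^{2\varphi_2(r)}{\pi^\star}g_{_E}, \]
since $\hat f$ is a fibrewise isometry; the mixed terms vanish because both metrics split according to their respective horizontal--vertical decomposition and $\dx\hat f$ respects the splitting. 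Adding the two pieces gives $\hat f^*\mg_{_{M_1,E_1}}=\mg_{_{M,E}}$. The one genuinely nontrivial point is the horizontal preservation in the second paragraph, which is precisely the geometric content of the parallel morphism hypothesis; once that is in hand, steps one and three are essentially bookkeeping.
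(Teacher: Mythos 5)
Your proof is correct, and its overall skeleton (show $\dx\hat f$ respects the horizontal--vertical splitting, note $r_1\circ\hat f=r$ so the weights match, then pull back each summand of the metric) is the same as the paper's; the difference lies in how the key step, preservation of the horizontal distribution, is verified. The paper first reduces to a parallel isometry over the identity of $M$ via the pulled-back connection $f^*D^{^{E_1}}$, lifts $\hat f$ to a map $\hat f^{\mathrm{up}}$ of ${\pi^\star}E$ onto $\pi_1^\star E_1$, and uses the tautological section together with the projection formula \eqref{projeccaoprincipal} to show ${\pi_1^* D^{^{E_1}}}_{\hat f_*X}\xi_1=\hat f^{\mathrm{up}}({\pi^\star}D^{^E}_X\xi)$, so that the kernels (the horizontal subspaces) correspond under $\dx\hat f$. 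You instead characterize a horizontal vector at $e=s(x)$ as $\dx s(v)$ with ${D}^{^E}_vs=0$ for a local section $s$, transport $s$ to $s_1=\hat f\circ s\circ\inv{f}$, and apply the chain rule plus the parallel-morphism identity to get ${D}^{^{E_1}}_{f_*v}s_1=\hat f({D}^{^E}_vs)=0$; this is an equivalent but more elementary, pointwise argument that avoids both the reduction step and the auxiliary lift $\hat f^{\mathrm{up}}$ (it only tacitly uses that every horizontal vector is realized by such a section, which is immediate). What the paper's route buys is consistency with the machinery already set up around $\xi$ and \eqref{projeccaoprincipal}, which is reused throughout; what yours buys is a shorter, self-contained verification of exactly the point where the parallelism hypothesis enters.
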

\begin{proof}
 Using the connection $f^*D^{^{E_1}}$ and a general procedure, one may reduce the problem to a parallel isometry $\hat{f}:(E,D^{^E})\lrr (E_1,D^{^{E_1}})$ of vector bundles over $M$ and along the identity of $M$. In this setting, this map may be raised to a vector bundle isometry $\hat{f}^{\mathrm{up}}(e,e')=(\hat{f}(e),\hat{f}(e'))$ of ${\pi^\star}E$ over $E$ onto the respective $\pi_1^\star E_1$ over $E_1$. Then we have $\hat{f}^*\xi_1=\hat{f}^{\mathrm{up}}\circ\xi$ and by hypothesis
 ${{\hat{f}}^*\pi_1^* D^{^{E_1}}}_X\circ\hat{f}^{\mathrm{up}}= \hat{f}^{\mathrm{up}}\circ{\pi^\star}D^{^E}_X,\ \forall X\in TE$. Finally it follows that 
 \begin{eqnarray*}  
 {\pi_1^* D^{^{E_1}}}_{\hat{f}_*X}\xi_1 &=& 
 {{\hat{f}}^*\pi_1^*D^{^{E_1}}}_X\hat{f}^*\xi_1 \\
 &=&  {{\hat{f}}^*\pi_1^*D^{^{E_1}}}_X\hat{f}^{\mathrm{up}}\xi
         \ =\ {\hat{f}}^{\mathrm{up}}({\pi^\star}D^{^E}_X\xi)\ .
 \end{eqnarray*}
 Hence the respective $D^{^E},D^{^{E_1}}$ horizontal subspaces are $(\mg_{_{M,E}},\mg_{_{M,E_1}})$-isometrically preserved by $\dx{\hat{f}}={\hat{f}}_*$ since this derivative is \textit{essentially} the identity on horizontals and since $r=\|e\|_{_E}^2=\|\hat{f}(e)\|_{_{E_1}}^2$ at all points $e\in E$ and the weight function $\varphi_1$ is the same. Regarding the vertical directions, the differential $\dx{\hat{f}}$ is that of a linear map, precisely $\hat{f}^{\mathrm{up}}$, and $\hat{f}$ is an isometry on the fibres by previous similar reason now with the weight function $\varphi_2$. Thus described the whole picture, we conclude the given bundle morphism is a manifold isometry.
\end{proof}
It is quite often the case that one has an isometry $f$ of $M$ and that $E\subset T^{p,q}M$,  $p,q\in\N$, is a sub-vector bundle of the tangent $(p,q)$-tensors on $M$, such that $f_*(E_x)=E_{f(x)},\ \forall x\in M$.
\begin{coro}\label{Isometriesarenaturallylifted}
 For any two functions $\varphi_1,\varphi_2$ of the squared-radius $r$, we have a 1-1 map
\begin{equation}
  \mathrm{Isom}(M,g_{_M})\hookrightarrow \mathrm{Isom}(E,\mg_{_{M,E}})\ .
\end{equation}
\end{coro}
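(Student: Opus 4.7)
The plan is to realise the desired map $f\mapsto\hat{f}$ by the natural tensorial lift and then reduce the corollary to the preceding theorem. Given $f\in\mathrm{Isom}(M,g_{_M})$, I define $\hat{f}\colon E\lrr E$ as the restriction to $E$ of the standard action of $f_*$ on $T^{p,q}M$ (combining push-forward on the $p$ contravariant and pull-back by $\inv{f}$ on the $q$ covariant slots). The restriction lands back in $E$ precisely by the hypothesis $f_*(E_x)=E_{f(x)}$, and by construction the diagram
\[ \pi_1\circ\hat{f}=f\circ\pi, \qquad M_1=M,\ E_1=E \]
commutes, so we are in the setting of the theorem.

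Next I would verify the two hypotheses of the preceding theorem: (i) $\hat{f}$ is a fibrewise isometry with respect to $g_{_E}$, and (ii) $\hat{f}$ is parallel, i.e.\ $f^*{D}^{^E}\circ\hat{f}=\hat{f}\circ{D}^{^E}$. Item (i) holds because $g_{_E}$ is the natural tensor metric inherited from $g_{_M}$ on the sub-bundle $E\subset T^{p,q}M$, and $f$ being an isometry of $g_{_M}$ the tensor action preserves this metric. Item (ii) holds because ${D}^{^E}$ is (the restriction of) the connection induced by $\nabla^{^M}$ on $T^{p,q}M$, and any isometry of $(M,g_{_M})$ intertwines with its Levi-Civita connection; since $E$ is assumed $f_*$-stable, the intertwining descends to $E$. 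Granted these two properties, the theorem applies and yields $\hat{f}\in\mathrm{Isom}(E,\mg_{_{M,E}})$.

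It remains to check that $f\mapsto\hat{f}$ is injective: the relation $\pi\circ\hat{f}=f\circ\pi$ allows the base map $f$ to be recovered from $\hat{f}$, so distinct $f$'s necessarily yield distinct $\hat{f}$'s. I do not foresee any serious obstacle here — the only mild subtlety is the implicit convention that the metric $g_{_E}$ and connection ${D}^{^E}$ on the sub-bundle $E\subset T^{p,q}M$ are the ones naturally induced from $(g_{_M},\nabla^{^M})$; once this is made explicit, the corollary is a direct corollary (indeed) of the theorem.
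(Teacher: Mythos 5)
Your proposal is correct and follows essentially the same route as the paper: lift $f$ to $\hat{f}$ via the tensorial action of $f_*$ on $E\subset T^{p,q}M$, use the invariance of the Levi-Civita connection (hence of the induced $g_{_E}$ and ${D}^{^E}$) under isometries to verify the hypotheses of the preceding theorem, and recover $f$ from $\pi\circ\hat{f}=f\circ\pi$ for injectivity. The paper's own proof is just a terser version of exactly this argument, resting on the uniqueness of the Levi-Civita connection.
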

\begin{proof}
The uniqueness of the Levi-Civita connection of $M$ implies it is an invariant connection for any given isometry $f$ of the base (this fundamental property, often taken for granted, contrasts for instance with symplectic geometry and symplectic connection theory). Then we are in the conditions above with the map $\hat{f}:E\lrr E$ induced by the differential $f_*:E\lrr f^*E$. (The term invariant connection, also used below in proposition \ref{totallygeodesicsections}, comes from \cite[vol. I, \S5]{KobNomi}).
\end{proof}

\subsection{Totally geodesic submanifolds}

We continue to deduce some properties of the metric.
\begin{prop}\label{totallygeodesicfibresandparallelsubbundles}
The Riemannian metric $\mg_{_{M,E}}$ and its Levi-Civita connection $\na^{^{M,E}}$ satisfy the following properties:\\
(i)\,\ The zero section  $M\equiv O_M\subset E$ is totally geodesic.\\
(ii)\,\ The fibres of $E$ are totally geodesic.\\
(iii)\,\ The following three assertions are equivalent: the vertical distribution $\calV\subset TE$ is $\na^{^{M,E}}$-parallel; the horizontal distribution $\calH^{{D}^{^E}}$ is $\na^{^{M,E}}$-parallel; $\varphi_1$ is a constant and ${D}^{^E}$ is flat.
\end{prop}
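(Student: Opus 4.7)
For (i) and (ii) the plan is to plug tangent vectors to each submanifold into the Levi-Civita formula \eqref{theLeviCivitaconnection} and check term-by-term that the result stays tangent. For the zero section $O_M$, its tangent vectors are precisely the horizontal lifts, and on $O_M$ one has $\xi=0$, hence $\xi^\flat=0$, $\calR=0$, and $A=0$ on pairs of horizontal vectors (the latter because $A(X^h,Y^h)=0$ by the splitting $A(X,Y)=A(X^h,Y^v)+A(X^v,Y^h)$). Thus $\na^{^{M,E}}$ restricts there to $D^{**}$, i.e. to $\pi^*\na^{^M}$, which is horizontal. For a fibre $E_x$, every tangent vector is vertical; I will check that $D^{**}_{X^v}Y^v={\pi^\star}D^{^E}_{X^v}Y^v$ is vertical, that $C_{X^v}Y^v$ is vertical (the only potentially horizontal piece of $C$ is $a(\xi^\flat(X)Y^h+\xi^\flat(Y)X^h)$, which vanishes), and that both $\calR(X^v,Y^v)=0$ and $A(X^v,Y^v)=0$, concluding that the second fundamental form is zero.

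For (iii), the first observation is that $\calV$ and $\calH^{{D}^{^E}}$ are orthogonal with respect to $\mg_{_{M,E}}$, and $\na^{^{M,E}}$ is metric, so one distribution is parallel if and only if the other is. I will then characterize when $\calV$ is parallel. Using \eqref{theLeviCivitaconnection} and recalling that $\calR(X,Y^v)=0$ and that $A$ takes values in the horizontal distribution, the horizontal part of $\na^{^{M,E}}_XY^v$ reduces to
\[
 a\,\xi^\flat(Y^v)X^h + A(X^h,Y^v).
\]
The plan is to first test this at $Y^v=\xi$: since $\xi^\flat(\xi)=r$ and $A(X^h,\xi)=0$ (by antisymmetry of $R^E$ in the defining formula for $A$), we obtain $arX^h=0$ for all $X^h$, forcing $a=0$, i.e. $\varphi_1'=0$, so $\varphi_1$ is constant. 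With $a=0$ the remaining condition is $A(X^h,Y^v)=0$ for all arguments, which through the defining identity of $A$ is equivalent to $R^E(X^h,Z^h)\xi=0$ for all $X^h,Z^h$ and all $\xi\in E$; hence $R^E=0$ and ${D}^{^E}$ is flat.

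For the converse direction I just need to run the same formulas backwards: if $\varphi_1$ is constant and ${D}^{^E}$ is flat, then $a=c_1=0$, $\calR=0$, and $A=0$, so $\na^{^{M,E}}=D^{**}+C$ with $C_XY^v$ involving only $\xi$, $Y^v$ and $X^v$ (all vertical), which keeps $\calV$ invariant; and symmetrically for $\calH^{{D}^{^E}}$. The main technical point, and the one I expect to require most care, is the evaluation $A(X^h,\xi)=0$ and the subsequent extraction of $R^E=0$ from the vanishing of $A$, since one must use the non-degeneracy of the metric in all three slots and the freedom of $\xi$ in the fibre.
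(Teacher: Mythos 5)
Your proposal is correct, and its core is the same computation as the paper's: the horizontal component of $\na^{^{M,E}}_X Y^v$ reduces to $a\,\xi^\flat(Y^v)X^h+A(X^h,Y^v)$, and its vanishing forces $\varphi_1'=0$ and $R^E=0$ (and conversely). You differ from the paper in two small but genuine ways in (iii). First, you get the equivalence of ``$\calV$ parallel'' and ``$\calH^{{D}^{^E}}$ parallel'' abstractly, from the fact that $\na^{^{M,E}}$ is metric and the two distributions are $\mg_{_{M,E}}$-orthogonal complements; the paper instead verifies the horizontal case separately through the coordinate equations \eqref{equationofparallel3}--\eqref{equationofparallel4}, so your route saves that second computation. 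Second, to extract $a=0$ you evaluate at $Y^v=\xi$ (using $A_X\xi=0$, which the paper has already established in Proposition \ref{covaderivativeofxi}), whereas the paper splits $e^{2\varphi_1}\langle\na^{^{M,E}}_{X^h}Y,Z^h\rangle$ into its symmetric and skew-symmetric parts in $(X,Z)$ and lets each vanish independently; both mechanisms are valid and give the same conclusion, with the only pedantic point in yours being that $a(r)=0$ is first obtained at $r>0$ and extends to $r=0$ by continuity of $\varphi_1'$. Parts (i) and (ii) are exactly the ``read it off \eqref{theLeviCivitaconnection}'' argument the paper indicates, and your identification of $T_{0_x}O_M$ with the horizontal subspace and the vanishing of $C$, $A$, $\calR$ on the relevant arguments is accurate.
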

\begin{proof}
(i) Trivial.\\
(ii) Equation \eqref{equationofparallel2} vanishes immediately if there is no horizontal part of $Y$. This condition of $\na^{^{M,E}}_{Z}Y\in\Gamma(\calV)$ for $Z,Y\in\Gamma(\calV)$ is also trivial to see from \eqref{theLeviCivitaconnection}, if one prefers. We conclude $\na^{^{M,E}}$ is also the Levi-Civita connection of the fibres, with metric $e^{2\varphi_2}{\pi^\star}g_{_E}$, so they are totally geodesic.\\
(iii) We prove that the two first assertions are both equivalent to the third. First, because of (ii), we are left to see $\na^{^{M,E}}_XY$ is vertical for any $X$ horizontal and $Y$ vertical. Taking $Z$ horizontal and looking at $e^{2\varphi_1}\langle \na^{^{M,E}}_XY,Z\rangle$, we get a symmetric and a skew-symmetric part:
\[ e^{2\varphi_1}\langle \na^{^{M,E}}_{X^h}Y,Z^h\rangle=e^{2\varphi_1}a\xi^\flat(Y)\langle X^h,Z^h\rangle+\frac{e^{2\varphi_2}}{2}\langle\calR(X^h,Z^h),Y\rangle \]
which must vanish in\-de\-pen\-den\-tly, giving the conditions since $a=2\varphi_1'$. They can also be read from \eqref{equationofparallel1}. Assuming now the second assertion and taking $Y$ horizontal in (\ref{equationofparallel3},\ref{equationofparallel4}), thus with all $B_\alpha=0$, the equations tell us again we must have $c_1=0$ and $R^E=0$ if $\na^{^{M,E}}Y$ is going to be horizontal, and reciprocally.
\end{proof}
The integrability of the horizontal distribution is independent of the metric.
The zero section $O_M$ is an important totally geodesic submanifold of $E$, which shall deserve further attention in the next chapter. It may be understood as an example of a \textit{soul} in the famous article of Cheeger and Gromoll, cf. \cite{Tapp}. From proposition \ref{covaderivativeofxi} we see that $\na^{^{M,E}}\xi=0$ on $E\backslash O_M$ if and only if $\varphi_1'=0$ and there $\exists c>0$ a constant such that
\begin{equation}
 \varphi_2(r)=-\frac{1}{2}\log r\,+ \frac{1}{2}\log c\ .
\end{equation}
However, we note $e^{2\varphi_2}=\frac{c}{r}$ is not bounded at the 0 section of $E$, failing our wishes to have smooth well-behaved metrics. Hence $\xi$ is never parallel because of $O_M$. We have the following result for later applications. It is the converse question from theorem \ref{conditionsforparallelvectorfields}.
\begin{teo}\label{parallelvectorfieldsonEandM}
 If the manifold $E$ admits a $\na^{^{M,E}}$-parallel non-vertical vector field $Y$, then $M$ admits a $\na^{^M}$-parallel vector field. In other words, every $\mg_{_{M,E}}$-parallel vector field over $E$ restricts over $O_M$ to an orthogonal sum of a parallel vector field of $M$ and a parallel section of $E$.
\end{teo}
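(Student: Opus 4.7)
The strategy is to read off the conclusion directly from the component equations \eqref{equationofparallel1}--\eqref{equationofparallel4} evaluated on the zero section $O_M$, where every term carrying a fibre-coordinate factor $y^\alpha$ drops out. I fix a chart $(x^1,\ldots,x^m)$ on an open $U\subset M$ together with an orthonormal frame $(e_1,\ldots,e_k)$ of $E$ over $U$, giving coordinates $(x,y)$ on $\inv{\pi}(U)\simeq U\times\R^k$. By \eqref{decompositionofchartvectorfields}, at $y=0$ the horizontal lifts reduce to $\pi^*\partial_i|_{O_M}=\partial_i$, so on $O_M$ the frame $\{\pi^*\partial_i,{\pi^\star}e_\alpha\}$ of $TE$ splits canonically into the basis $\{\partial_i\}$ of $TO_M\simeq TM$ and the frame $\{{\pi^\star}e_\alpha\}$ of the vertical normal bundle, which is identified with $E|_{O_M}$. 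Writing $Y=Y^j\pi^*\partial_j+B^\alpha{\pi^\star}e_\alpha$ on $\inv{\pi}(U)$ and restricting to $y=0$, I set
\[
Y_0:=Y^j(\,\cdot\,,0)\,\partial_j\in\Gamma(TM),\qquad e_0:=B^\alpha(\,\cdot\,,0)\,e_\alpha\in\Gamma(E),
\]
and the decomposition $Y|_{O_M}=Y_0+e_0$ is orthogonal because $\mg_{_{M,E}}$ is an orthogonal direct sum of its horizontal and vertical pieces.

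Next, I feed the hypothesis $\na^{^{M,E}}Y=0$ into \eqref{equationofparallel1} and \eqref{equationofparallel3} and set $y=0$. Every correction term on those right-hand sides carries an explicit factor $y^\alpha$ or $y^\beta$ and therefore vanishes, leaving exactly
\[
\papa{Y^q}{x^i}+Y^l\Gamma_{il}^{M,q}\,=\,0\qquad\text{and}\qquad\papa{B^\alpha}{x^i}+B^\beta\Gamma_{i\beta}^{E,\alpha}\,=\,0
\]
at $y=0$, which are the coordinate expressions of $\na^{^M}Y_0=0$ and ${D}^{^E}e_0=0$. This already gives both statements of the theorem, modulo the non-vanishing of $Y_0$.

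The final step is to upgrade non-verticality of $Y$ to $Y_0\neq0$. Suppose for contradiction that $Y^q(x,0)\equiv0$ on $M$ for all $q$. Fix $x_0\in M$ and $v\in E_{x_0}$ with $\|v\|_{_E}=1$, and evaluate \eqref{equationofparallel2} along the radial segment $y=sv$ of the fibre, contracted with $v^\beta$. Since $v^\alpha v^\beta$ is symmetric in $\alpha,\beta$ while $R^E_{\beta\alpha jl}=-R^E_{\alpha\beta jl}$ is antisymmetric, the curvature term vanishes identically, leaving the scalar linear ODE
\[
\frac{\dx}{\dx s}Y^q(x_0,sv)+s\,a(s^2)\,Y^q(x_0,sv)\,=\,0,\qquad Y^q(x_0,0)=0,
\]
for each $q$; by uniqueness $Y^q\equiv0$ along the ray. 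Since every point of $E$ sits on such a radial segment emanating from $O_M$, we conclude $Y^h\equiv0$ on $E$, contradicting non-verticality. The main obstacle I anticipate is spotting the antisymmetry cancellation here, which decouples the horizontal components along vertical rays into a first-order linear scalar equation; once that is in place, the zero-section values control the whole fibre, and the remaining steps are just direct evaluation of \eqref{equationofparallel1}--\eqref{equationofparallel4} at $y=0$.
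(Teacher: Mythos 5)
Your proposal is correct and follows essentially the paper's own route: the paper's one-line proof is precisely your second step, namely reading \eqref{equationofparallel1} and \eqref{equationofparallel3} at $y=0$, where all terms carrying a factor $y^\alpha$ vanish and one is left with the coordinate equations $\na^{^M}Y_0=0$ and ${D}^{^E}e_0=0$, in line with the zero section being totally geodesic. Your additional radial argument from \eqref{equationofparallel2} — using the skew-symmetry $R^E_{\beta\alpha jl}=-R^E_{\alpha\beta jl}$ to decouple the horizontal components into the scalar ODE $\frac{\dx}{\dx s}Y^q+s\,a(s^2)Y^q=0$ along fibre rays — is sound and is a welcome sharpening the paper leaves implicit, since it shows a non-vertical parallel field cannot have identically vanishing horizontal part on $O_M$, so the parallel field $Y_0$ obtained on $M$ is genuinely non-trivial.
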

\begin{proof}
 One just reads through \eqref{equationofparallel1} and \eqref{equationofparallel3}, consistent with (i) from proposition \ref{totallygeodesicfibresandparallelsubbundles}.
\end{proof}
 
A problem of the same kind is to try to find other sections $e:M\rr E$ which embed $M$ as a totally geodesic submanifold $e(M)=M^e$ of the Riemannian manifold $(E,\mg_{_{M,E}})$. It is easy to deduce the canonical decomposition (notice $e^*{\pi^\star}{D}^{^E}e^*\xi={D}^{^E}e$)
\begin{equation}
 \dx e(X)=X^h+{D}^{^E}_Xe\ \in e^*TE\:,\ \ \forall X\in T_xM\ .
\end{equation}
Hence we have a description of $T_{e_x}M^e\subset T_{e_x}E$ for each $x\in M$. We end this section with a particular case (which might induce further relevant study). It is interesting since it brings into the equations a new general Hessian operator $H^{^E}$.
\begin{prop}\label{totallygeodesicsections}
(i) Let $\varphi_1,\varphi_2$ be constants and suppose that $R^Ee=0$. Then $M^e$ is a totally geodesic submanifold of $E$ if and only if
\begin{equation}\label{halfcurvatureequation}
H^{^E}(X,Y)e={D}^{^E}_X{D}^{^E}_Ye-{D}^{^E}_{\na^{^M}_XY}e=0 \ ,\qquad\forall X,Y\in TM\ . 
\end{equation}
(ii)  Let $\varphi_1,\varphi_2$ be constants and suppose that $e_0$ is a $D^{^E}$-parallel section. Then the translation map $t:E\lrr E$, $t(e)=e+e_0$, is an invariant map of $\na^{^{M,E}}$.
\end{prop}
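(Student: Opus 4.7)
The plan is to handle (i) by computing the second fundamental form of $M^e$ directly, and (ii) by showing $t$ is an isometry. For (i), since the weight functions are constants, the coefficients $a,b,c_1,c_2$ in \eqref{formuladeCcoeficientes} all vanish, so $C\equiv 0$ in \eqref{theLeviCivitaconnection}. Extending $X,Y$ to vector fields on $M$, I would compute $\na^{^{M,E}}_{\dx e(X)}\dx e(Y)$ at points of $M^e$ using the natural decomposition $\dx e(X)=X^h+\pi^\star(D^E_X e)$ noted in the excerpt.

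Three ingredients contribute. First, $D^{**}_{\dx e(X)}\dx e(Y)=(\na^{^M}_X Y)^h+\pi^\star(D^E_X D^E_Y e)$: this follows from the direct-sum formula $D^{**}=\pi^*\na^{^M}\oplus\pi^\star D^E$, the fact that pull-back sections have zero derivative along vertical directions, and the identity $\pi^\star D^E_{X^h}\pi^\star(D^E_Y e)=\pi^\star(D^E_X D^E_Y e)$. Second, $\calR(\dx e(X),\dx e(Y))|_{e(x)}=R^E(X,Y)\xi|_{e(x)}=R^E(X,Y)e$, which vanishes by the hypothesis $R^E e=0$. Third, $A$ only has mixed horizontal-vertical contributions, and the defining formula reduces $A(X^h,\pi^\star(D^E_Y e))$ to an expression involving $\langle R^E(X,\cdot)e, D^E_Y e\rangle_{_E}$, again zero by $R^E e=0$. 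Hence $\na^{^{M,E}}_{\dx e(X)}\dx e(Y)=(\na^{^M}_X Y)^h+\pi^\star(D^E_X D^E_Y e)$. Since the tangent space to $M^e$ at $e(x)$ is the graph $\{Z^h+\pi^\star(D^E_Z e):Z\in T_x M\}$, tangency forces $Z=\na^{^M}_X Y$ from the horizontal part, so matching the vertical part yields precisely \eqref{halfcurvatureequation}, proving the equivalence.

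For (ii), I would show $t$ is an isometry of $\mg_{_{M,E}}$, whence it preserves $\na^{^{M,E}}$ by uniqueness of the Levi-Civita connection. In a local frame $\{e_\alpha\}$ on $U\subset M$ with fibre coordinates $(x,y)$ and $e_0=y_0^\alpha(x)e_\alpha$, the map reads $t(x,y)=(x,y+y_0(x))$. Clearly $t_*$ preserves the vertical distribution; and applying the horizontal-lift formula \eqref{decompositionofchartvectorfields} at both $(x,y)$ and $(x,y+y_0)$, the discrepancy between the two horizontal lifts of $\partial_i$ is exactly $(\partial_i y_0^\beta-y_0^\alpha\Gamma_{i\alpha}^{E,\beta})\pi^\star e_\beta=\pi^\star(D^E_{\partial_i}e_0)$, which vanishes by hypothesis. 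Hence $t_*$ preserves the splitting \eqref{canonicaldecompo} and is essentially the identity on each summand under the natural affine identifications. Since $\pi\circ t=\pi$ and the weights $\varphi_i$ are constants (so the metric \eqref{theweightedSasakimetric} has no dependence on the fibre norm), we conclude $t^*\mg_{_{M,E}}=\mg_{_{M,E}}$.

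The main technical obstacle lies in carefully unpacking the vanishing of the $A$-term in (i): one must recognise that, along $M^e$, the tautological field $\xi$ takes the value $e(x)$, so that $\calR$ and $A$ both involve expressions of the shape $R^E(\cdot,\cdot)e$, to which the hypothesis $R^E e=0$ applies directly. Once this simplification is made, the equivalence reduces to the cleanly isolated $D^{**}$ term, which is \eqref{halfcurvatureequation}.
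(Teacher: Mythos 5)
Your proof of (i) is correct and is essentially the paper's own argument: with constant weights $C=0$, the hypothesis $R^Ee=0$ kills both the $\calR$-term and the $A$-term along $M^e$ (since $\xi=e(x)$ there), leaving $\na^{^{M,E}}_{\dx e(X)}\dx e(Y)=(\na^{^M}_XY)^h+{\pi^\star}(D^{^E}_XD^{^E}_Ye)$, and tangency of this vector to the graph $\{Z^h+{\pi^\star}(D^{^E}_Ze):Z\in T_xM\}$ is exactly \eqref{halfcurvatureequation}. For (ii) you use the same key computation as the paper --- parallelism of $e_0$ makes $t_*$ carry horizontal lifts to horizontal lifts, so $t_*$ is essentially the identity with respect to the splitting \eqref{canonicaldecompo} --- but you conclude differently: you show $t^*\mg_{_{M,E}}=\mg_{_{M,E}}$ (constancy of $\varphi_1,\varphi_2$ being what removes the dependence on the fibre norm, which $t$ changes) and invoke uniqueness of the Levi-Civita connection, whereas the paper verifies directly that $\calR$, $A$ and hence $\na^{^{M,E}}$ are $t$-invariant. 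Your route is equally valid and in fact yields slightly more (that $t$ is an isometry, not merely an affine map of the connection). One small slip: the discrepancy of horizontal lifts should read $(\partial_iy_0^\beta+y_0^\alpha\Gamma_{i\alpha}^{E,\beta}){\pi^\star}e_\beta={\pi^\star}(D^{^E}_{\partial_i}e_0)$, with a plus sign inside the parenthesis; the identification with ${\pi^\star}(D^{^E}_{\partial_i}e_0)$ and the conclusion are unaffected.
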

\begin{proof}
(i) Letting $X^h+{D}^{^E}_Xe$ and $Y^h+{D}^{^E}_Ye$ be any two vectors tangent to $M^e$, then the condition is that the following vector must also be tangent to the same submanifold: 
 \[ \na^{^{M,E}}_{X^h+{D}^{^E}_Xe}(Y^h+{D}^{^E}_Ye)=\na^{^M}_XY+{D}^{^E}_X{D}^{^E}_Ye\ .\]
The right hand side follows from computations; for instance one must verify that $A_{X^h}{D}^{^E}_Ye=0$. The formula yields the result immediately.\\
(ii) First, recalling the hypothesis, we conclude with a little computation that $t_*$ preserves $\calH^{D^{^E}}$. It follows that essentially $t_*$ is the \textit{identity} map on $TE$, which implies easily that $t\cdot\calR=\calR$. Since the tensor $C=0$ and the weights are constant, we also have $A$ and hence $\na^{^{M,E}}$ invariant by $t$.
\end{proof}
Clearly, the left hand side of \eqref{halfcurvatureequation} is the half of $R^E_{X,Y}e=H_{X,Y}e-H_{Y,X}e$. Also, taking the trace, we may say $e$ is harmonic. We note this generalized Hessian and its symmetric part are tensorial in $X,Y$. These operators could be better studied in the theory of connections on vector bundles.

\subsection{The geodesics}
\label{Thegeodesics}

We recover the trivialization of $E$ introduced earlier. It is given by a neighbourhood $\inv{\pi}(U)\simeq U\times\R^k$ where $U$ is the domain of a chart $x$ of $M$. We also use an orthonormal frame $\{e_\alpha\}_{\alpha=1,\ldots,k}$, formed by sections of $E$ on $U$, as introduced in the same section \ref{somepropertiessection}. 

A curve $\gamma=\gamma(t),\ t\in\R$, with image in $\inv{\pi}(U)\subset E$ may then be written in the coordinates, functions of $t$, as a map:
\begin{equation}
 \gamma=(\gamma^1,\ldots,\gamma^m,y^1,\ldots,y^k)\ .
\end{equation}
In  the following we use summation convention for repeated indices and denote by $\gammadot^i$ the derivative with respect to $t$. The tautological vector field $\xi$ along $\gamma$ reads $\xi_{\gamma}=y^\alpha{\pi^\star}e_\alpha$. When $\gamma$ defines a section $y=y^\alpha e_\alpha$ of $E\lrr M$ along $\pi\circ\gamma=(\gamma^1,\ldots,\gamma^m)$, then along this same curve we have
\begin{equation}
{D}^{^E}_{\partial_t}y=\ydot^\beta e_\beta+\gammadot^iy^\alpha\Gamma_{i\alpha}^{E,\beta} e_\beta=z^\beta e_\beta\ . 
\end{equation}
It is indeed useful to define 
\begin{equation}
 z^\beta=\ydot^\beta+\gammadot^iy^\alpha\Gamma_{i\alpha}^{E,\beta} \ .
\end{equation}
\begin{teo}\label{eqgeodesics}
 The curve $\gamma$ is a geodesic of $\mg_{_{M,E}}$ if and only if, $\forall{1\leq p\leq m},\ {1\leq\alpha\leq k}$, we have
 \begin{equation}\label{eqgeodesics2}
  \begin{cases}
  \gammadotdot^p+\gammadot^i\gammadot^j\Gamma_{ij}^{M,p} +2a\gammadot^p z^\beta y^\beta+ e^{2\varphi_2-2\varphi_1}\gammadot^iz^\beta y^\mu R^E_{\beta\mu iq}g^{qp}=0\\
  \dot{z}^\alpha+\gammadot^i\gammadot^jc_1g_{ij}y^\alpha+\gammadot^i z^\beta\Gamma_{i\beta}^{E,\alpha}-bz^\beta z^\beta y^\alpha 
  +2bz^\alpha z^\beta y^\beta=0
  \end{cases}\ .
 \end{equation}
\end{teo}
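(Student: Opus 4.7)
My plan is to insert $X=Y=\gammadot$ into the Levi-Civita formula \eqref{theLeviCivitaconnection} and read off the coefficient of each $\pi^*\partial_p$ and each $\pi^\star e_\alpha$. Two immediate simplifications occur: the tensor $\calR$ is skew, so $\calR(\gammadot,\gammadot)=0$; and $A$ is symmetric and horizontal-valued with $A(X,Y)=A(X^h,Y^v)+A(X^v,Y^h)$, so $A(\gammadot,\gammadot)=2A(\gammadot^h,\gammadot^v)$. Hence $\na^{^{M,E}}_{\gammadot}\gammadot$ splits cleanly into a horizontal and a vertical half, and it suffices to match each against the two lines of \eqref{eqgeodesics2}.

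The second preparatory step is to decompose $\gammadot$ in the trivialisation $\inv{\pi}(U)\simeq U\times\R^k$. Using \eqref{decompositionofchartvectorfields} to invert $\partial_i = \pi^*\partial_i + y^\alpha\Gamma_{i\alpha}^{E,\beta}\,\pi^\star e_\beta$, a direct computation gives $\gammadot=\gammadot^i\pi^*\partial_i+z^\beta\pi^\star e_\beta$, which is precisely the reason for introducing $z^\beta = \ydot^\beta+\gammadot^iy^\alpha\Gamma_{i\alpha}^{E,\beta}$ as the covariant derivative of the vertical part of $\gamma$ along $\pi\circ\gamma$. In particular $\gammadot^h=\gammadot^i\pi^*\partial_i$, $\gammadot^v=z^\beta\pi^\star e_\beta$, $\langle\gammadot,\gammadot\rangle_M=\gammadot^i\gammadot^jg_{ij}$, $\langle\gammadot,\gammadot\rangle_E=z^\beta z^\beta$, and $\xi^\flat(\gammadot)=y^\beta z^\beta$.

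Each summand of $\na^{^{M,E}}_{\gammadot}\gammadot$ is then expanded in these coordinates. Since $D^{**}=\pi^*\na^{^M}\oplus\pi^\star D^{^E}$ acts in a product fashion, $D^{**}_{\partial_t}\gammadot$ produces the Christoffel expressions $(\gammadotdot^p+\gammadot^i\gammadot^j\Gamma_{ij}^{M,p})\pi^*\partial_p$ horizontally and $(\dot z^\alpha+\gammadot^iz^\beta\Gamma_{i\beta}^{E,\alpha})\pi^\star e_\alpha$ vertically. Substitution of $X=Y=\gammadot$ into \eqref{formuladeC} immediately contributes the $a,b,c_1,c_2$ terms appearing in \eqref{eqgeodesics2}, after invoking $c_2=-b$ from \eqref{formuladeCcoeficientes} to combine the two vertical contributions proportional to $y^\alpha$. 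Finally, because $\calR(X,Y)=\pi^\star R^E(X^h,Y^h)\xi$ annihilates vertical slots, the defining equation of $A$ with $Z=\pi^*\partial_q$ collapses to $e^{2\varphi_1}\langle A(\gammadot^h,\gammadot^v),\pi^*\partial_q\rangle_M=\tfrac12 e^{2\varphi_2}\gammadot^iy^\mu z^\beta R^E_{\beta\mu iq}$, contributing exactly the curvature term in the first line of \eqref{eqgeodesics2}.

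The only real obstacle is clerical: one must resist the temptation to work with $\gammadotdot$ and $\ydotdot$ directly (they are \emph{not} the correct covariant components when $D^{^E}$ is non-trivial) and instead systematically keep $z^\beta$ and its ordinary time-derivative $\dot z^\alpha$ as the vertical book-keeping variables. Once this discipline is maintained, every step is a routine algebraic expansion, and the two components of $\na^{^{M,E}}_{\gammadot}\gammadot=0$ are precisely the two lines of \eqref{eqgeodesics2}.
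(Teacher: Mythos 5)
Your proposal is correct and follows essentially the same route as the paper: decompose $\dot{\gamma}=\dot{\gamma}^i\pi^*\partial_i+z^\beta\pi^\star e_\beta$ via \eqref{decompositionofchartvectorfields}, then expand $\na^{^{M,E}}_{\dot{\gamma}}\dot{\gamma}$ term by term from \eqref{theLeviCivitaconnection}, using $\calR(\dot{\gamma},\dot{\gamma})=0$, $c_2=-b$, and the defining relation for $A$ to produce exactly the two lines of \eqref{eqgeodesics2}. The only difference is organizational (you treat $D^{**}$, $C$, $A$, $\calR$ separately, while the paper expands $\na_{\dot{\gamma}}\pi^*\partial_i$ and $\na_{\dot{\gamma}}\pi^\star e_\beta$), which does not change the substance.
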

\begin{proof}
A geodesic of $\mg_{_{M,E}}$ is a curve which satisfies ${\gamma^*\na^{^{M,E}}}_{\partial_t}\gammadot=0$ (introducing $\gamma^*$ is the same as saying \textit{along} $\gamma$), so first we deduce from \eqref{decompositionofchartvectorfields} the canonical decomposition 
 \begin{align*}
  \gammadot&=\gammadot^i\partial_i+\ydot^\beta\partial_{y^\beta}\\
  &=\gammadot^i(\pi^*\partial_i+y^\alpha\Gamma_{i\alpha}^{E,\beta}{\pi^\star}e_\beta) +\ydot^\beta{\pi^\star}e_\beta\\
  &=\gammadot^i\pi^*\partial_i+z^\beta{\pi^\star}e_\beta 
 \end{align*}
(notice that this is essentially $\gammadot=\gammadot^i\pi^*\partial_i+\pi^\star(D^{^E}_{\partial_t}y)$).
Then 
\begin{equation*}
 {\gamma^*\na^{^{M,E}}}_{\partial_t}\gammadot=\gammadotdot^i\pi^*\partial_i+ \gammadot^i\na^{^{M,E}}_{\gammadot}\pi^*\partial_i+\dot{z}^\beta{\pi^\star}e_\beta +z^\beta \na^{^{M,E}}_\gammadot{\pi^\star}e_\beta\ ,
\end{equation*}
and since we have 
\begin{align*}
\na^{^{M,E}}_{{\pi^\star}e_\beta}\pi^*\partial_i=ay^\beta\pi^*\partial_i+A_{{\pi^\star}e_\beta}\pi^*\partial_i=ay^\beta\pi^*\partial_i+\frac{e^{2\varphi_2-2\varphi_1}}{2}y^\mu R^E_{\beta\mu ij}g^{jq}\pi^*\partial_q\ ,
\end{align*}
we deduce the two summands
\begin{align*}
\gammadot^i\na^{^{M,E}}_\gammadot\pi^*\partial_i&=\gammadot^i\gammadot^j\bigl(\Gamma_{ji}^{M,l}\pi^*\partial_l+c_1g_{ij}y^\alpha{\pi^\star}e_\alpha-\frac{1}{2}\calR(\pi^*\partial_j,\pi^*\partial_i)\bigr)+\gammadot^iz^\beta \na^{^{M,E}}_{{\pi^\star}e_\beta}\pi^*\partial_i\\
&= \gammadot^i\gammadot^j\Gamma_{ij}^{M,l}\pi^*\partial_l
+\gammadot^i\gammadot^jc_1g_{ij}y^\mu{\pi^\star}e_\mu 
+\gammadot^iz^\beta (ay^\beta\pi^*\partial_i 
+\frac{e^{2\varphi_2-2\varphi_1}}{2}y^\mu R^E_{\beta\mu ij}g^{jq}\pi^*\partial_q)
 \end{align*}
and
\begin{align*}
 z^\beta \na^{^{M,E}}_\gammadot{\pi^\star}e_\beta&=z^\beta\gammadot^j(\Gamma_{j\beta}^{E,\mu}{\pi^\star}e_\mu +ay^\beta\pi^*\partial_j+\frac{e^{2\varphi_2-2\varphi_1}}{2}y^\mu R^E_{\beta\mu jl} g^{lq}\pi^*\partial_q)+\\
  &\qquad +z^\beta z^\nu(c_2\delta_\nu^\beta y^\tau{\pi^\star}e_\tau+
 by^\nu{\pi^\star}e_\beta+by^\beta{\pi^\star}e_\nu)\ .
\end{align*}
 Recalling $c_2=-b$, summing and contracting, finishes the proof.
\end{proof}
We recall that $\Gamma^M,\Gamma^E$ and $R^E$ depend only of the $\gamma^i$. Also the geodesics of $M$ become geodesics of $O_M$, the zero section, as expected by earlier findings. Other lifts are quite `singular'.
\begin{prop}
Let $\gamma$ be a curve in $E$  which defines a \textit{parallel section} $y$ along the curve $\tau=\pi\circ\gamma$, thus having $\|y\|^2_{_E}=r_0$ a constant. Moreover, we assume that $r_0\neq0$. Then $\gamma$ is a geodesic of $\mg_{_{M,E}}$ if and only if $\tau$ is a geodesic of $M$ and  ${\varphi'_1}(r_0)=0$.
\end{prop}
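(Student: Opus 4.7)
The plan is to feed the parallel-section hypothesis directly into the geodesic equations of Theorem \ref{eqgeodesics}. The definition of $z^\beta$ appearing just before that theorem shows that $y = y^\alpha e_\alpha$ being a parallel section along $\tau = \pi \circ \gamma$ is exactly the statement $z^\beta \equiv 0$. I would substitute this into both lines of \eqref{eqgeodesics2} and track what survives.

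In the first (horizontal) line, every summand other than the first two carries at least one $z$-factor; what remains is $\gammadotdot^p + \gammadot^i \gammadot^j \Gamma_{ij}^{M,p} = 0$, which is precisely the geodesic equation for $\tau$ on $M$. In the second (vertical) line, $z \equiv 0$ also gives $\dot z \equiv 0$, so the equation collapses to $c_1 g_{ij} \gammadot^i \gammadot^j y^\alpha = 0$ for every $\alpha$. Since $r_0 = \|y\|^2_{_E} \neq 0$, the components $(y^\alpha)$ cannot all vanish at any point, so this is equivalent to $c_1 \|\dot\tau\|^2_M = 0$. Assuming $\gamma$ is non-constant (equivalently $\tau$ is non-constant, since a parallel section over a fixed point of $M$ is constant), the already-established fact that $\tau$ is a geodesic forces $\|\dot\tau\|^2_M$ to be a strictly positive constant, whence $c_1$ must vanish at $r = r_0$. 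The formula $c_1 = -2\varphi_1' e^{2(\varphi_1-\varphi_2)}$ from Theorem \ref{TeoremaformuladeCcoeficientes} then yields $\varphi_1'(r_0) = 0$.

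The converse direction is an immediate reverse substitution: if $\tau$ is a geodesic of $M$ and $\varphi_1'(r_0) = 0$, then setting $z \equiv 0$ makes every term in both lines of \eqref{eqgeodesics2} vanish, so $\gamma$ is a geodesic of $\mg_{_{M,E}}$. I foresee no real obstacle; the only point requiring a word of care is the degenerate constant-curve case, which is tacitly excluded by the word \emph{curve} and in which the conclusion would be vacuous anyway.
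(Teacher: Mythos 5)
Your argument is correct and is essentially the paper's own proof: substitute $z^\alpha\equiv 0$ into \eqref{eqgeodesics2}, note the horizontal line reduces to the geodesic equation for $\tau$ and the vertical line to $c_1\|\dot\tau\|^2_{_M}y^\alpha=0$ with some $y^\alpha\neq0$, forcing $c_1(r_0)=0$, i.e. $\varphi_1'(r_0)=0$. Your explicit remark on excluding the constant-curve case is a minor point the paper leaves tacit, but otherwise the two proofs coincide.
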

\begin{proof}
 This is immediate from \eqref{eqgeodesics2}, since the assumption is $z^\alpha=0,\ \forall1\leq\alpha\leq k$, and the term $\gammadot^i\gammadot^jc_1g_{ij}=\|\tau\|_{_M}^2c_1$ (notice some $y^\alpha\neq0$) varies only with $c_1(r)$ for any geodesic $\tau$.
\end{proof}
It is interesting to notice the case $a=c_1=0$, i.e., the case $\varphi_1'(r)=0, \forall r$. Since parallel sections of $E$ exist along any curve in $M$, we have lifts of geodesics of $M$ to geodesics of $E$ with any given initial 1st order conditions. In the next subsection we look at the case when all $\gammadot^i=0$. These are the vertical geodesics, described below in formula \eqref{verticalgeodesic}.

Also we observe that while the first equation in \eqref{eqgeodesics2} is similar to that of a Jacobi vector field $y$ along a curve, the second corresponds with ${D}^{^E}{D}^{^E}y=0$.

Regarding the completeness of the metric $\mg_{_{M,E}}$, we have some observations on a statement which still aims for a rigorous proof. Recall the hypothesis that $\varphi_i,\ i=1,2$ are smooth at $r=0$ on the right. We then conjecture that any such $\mg_{_{M,E}}$ is complete if and only if the metric $g_{_M}$ on $M$ is complete and also the totally geodesic fibres are complete. Our argument relies on the results that Riemannian completeness is a question of the induced metric space, that solutions for the above system do exist and that the topology of $U\times\R^k$ does not prohibit their continuous development through any given instant. The completeness by Cauchy sequences on the base and the bundle transition functions assure the smooth development up to infinity of geodesics contained in $E$.

In particular we believe the previous assertion remains true regardless of ${D}^{^E}$ being also complete or not (we have in mind the notion of a \textit{complete} connection on a vector bundle as a connection for which parallel transport in $E$ is continuously defined along any given curve in $M$)\footnote{These interactions are important, specially for the above problem when we think of the pseudo-Riemannian case. However, even for this situation, for the weighted Sasaki pseudo-Riemannian structures, defined by obvious sign change in \eqref{theweightedSasakimetric}, we believe the metric completeness of the base manifold is still the sufficient condition, with the same arguments as above.}.

There seems to be no reference for this problem, namely among the many studies of all the generalized Sasaki metrics found in the literature. For geodesics of tangent sphere bundles with classical Sasaki metric ($\varphi_1=\varphi_2=0$), one may see \cite{BBNV} and try to bring those results into the present setting. The completeness stands again conjecturally when we have a complete but non-compact base.

\subsection{Spherically symmetric metrics on $\R^k$}

A vertical geodesic of $\mg_{_{M,E}}$ is a geodesic which lies in the fibres of $E$. In virtue of proposition \ref{totallygeodesicfibresandparallelsubbundles}, any vertical geodesic is equivalent to a geodesic of $E$ which is tangent to the fibres of $E$ at just one point. We can analyse these curves in the vector-space manifold $\R^k$ with metric $e^{2\varphi_2(r)}((\dx y^1)^2+\cdots+(\dx y^k)^2)$. The metric, say $g_{_{\varphi_2}}$, clearly has spherical symmetry (or rotational symmetry, both terms appear in the literature with the same meaning). Since we could not find its basic computations elsewhere, we write some results here for completion. From theorem \ref{eqgeodesics} we immediately write the geodesic equations:
\begin{equation}\label{verticalgeodesic}
 \ydotdot^\alpha+2b\ydot^\alpha\ydot^\beta y^\beta-b\ydot^\beta\ydot^\beta y^\alpha=0\ ,\quad\forall1\leq\alpha\leq k\ .
\end{equation}
We remark ${\dot{\varphi}_2}=2\ydot^\beta y^\beta\varphi_2'=b\ydot^\beta y^\beta$ and $r=y^\alpha y^\alpha$ (cf. \eqref{derivativeofr}). As it can be seen, resuming with our main study, it is not easy to find the vertical geodesics of the metric $\mg_{_{M,E}}$. Of course the case $b=0$ is well-known.

For completion of exposition we give the sectional, Ricci and scalar curvatures of the metric $g_{_{\varphi_2}}$. With the free coordinates $y^\alpha$ of $\R^k$, we deduce from \eqref{theLeviCivitaconnection} that $\na^{^{M,E}}_\beta\partial_\nu = -b\delta_\beta^\nu y^\mu\partial_\mu+by^\beta\partial_\nu+  by^\nu\partial_\beta$ and hence that (no summation on repeated indices here)
\begin{equation}
 \begin{split}
  R^{\na^{^{M,E}}}_{\alpha\beta\alpha\beta}&=2\papa{b}{y^\alpha}y^\beta\delta_\alpha^\beta- \papa{b}{y^\alpha}y^\alpha-\papa{b}{y^\beta}y^\beta
 +b(1+br)\delta_\alpha^\beta-b(1+br)+\\&\qquad\qquad+ b\delta_\alpha^\beta-b+
b^2(y^\beta y^\beta-2y^\alpha y^\beta\delta_\alpha^\beta+y^\alpha y^\alpha)\ .
 \end{split}
\end{equation}
For any plane $\Pi\subset T_y\R^k$ spanned by $\partial_\alpha,\partial_\beta$ with $\alpha\neq\beta$, we find (no summation convention here) the sectional curvature:
\begin{equation}
\begin{split} \label{curvaturaseccionalvvv}
 k^{g_{_{\varphi_2}}}(\Pi)&=e^{-2\varphi_2}R^{\na^{^{M,E}}}_{\alpha\beta\alpha\beta}\\
 &=e^{-2\varphi_2}\biggl(-\papa{b}{y^\alpha}y^\alpha-\papa{b}{y^\beta}y^\beta-2b-b^2r+b^2(y^\beta y^\beta+y^\alpha y^\alpha)\biggr)\\
 &=e^{-2\varphi_2}\bigl((b^2-2b')(y^\alpha y^\alpha+y^\beta y^\beta)-2b-b^2r\bigr)\\
 &=4e^{-2\varphi_2}\bigl(({\varphi_2'}^2-\varphi_2'')(y^\alpha y^\alpha+y^\beta y^\beta)-\varphi_2'-r{\varphi_2'}^2\bigr) \ .
\end{split} 
\end{equation}
For $k=2$ the metric is flat if and only if $\varphi_2''r+\varphi_2'=0$. Non-trivial solutions are ill-defined metrics: $\varphi_2=-l\log r\,+L$ with constant $l,L$.
 The Ricci curvature is given by
\begin{equation}
\begin{split}
 \ric^{g_{_{\varphi_2}}}(\partial_\beta,\partial_\beta) &=\sum_{\alpha\neq\beta}R^{\na^{^{M,E}}}_{\alpha\beta\alpha\beta}
 =(b^2-2b')\bigl(r+(k-2)(y^\beta)^2\bigr)-(k-1)(b^2r+2b)\\
 &\quad\quad=4({\varphi_2'}^2-{\varphi_2''})\bigl(r+(k-2)(y^\beta)^2\bigr)-4(k-1)({\varphi_2'}^2r+{\varphi_2'})
\end{split}
\end{equation}
and (easy also to deduce from \cite[Theorem 1.159, formula f]{Besse})
\begin{equation}
\begin{split}
 \Scal^{g_{_{\varphi_2}}}&=e^{-2\varphi_2}(k-1)\bigl(2b^2r -4b'r-kb^2r-2kb\bigr)\\
 &=-4e^{-2\varphi_2}(k-1)\bigl(r{\varphi_2'}^2(k-2)+\varphi_2'k+2r\varphi_2''\bigr)\ .
\end{split}
\end{equation}
We can easily guarantee conditions in order to have $g_{_{\varphi_2}}$ globally with negative scalar curvature.

\subsection{Cheeger-Gromoll, Musso-Tricerri and generalized Bergery metric}
\label{CGMTagBm}

In the famous paper \cite{MusTri}, E.~Musso and F.~Tricerri introduced a metric on the total space of $TM\lrr M$ which then they attribute to \cite{CheeGromoll}. The well-known name of Cheeger-Gromoll for a metric on the tangent bundle of any given Riemannian manifold is quite surprising\footnote{Indeed, after reading both articles, the present author does not find the significant reason for this attribution and he seems not to be the only; in \cite{KaziSali} the choice is referred as a matter of inspiration.}.

The Cheeger-Gromoll metric is nowadays an extensively studied and generalized object, cf. \cite{Abb1,AbbCalva,AbbSarih,Alb2008,BenLoubWood1,BenLouWood2,KaziSali,KozNied,Munteanu,Seki,TahVanWat} just to reference a few works. The source for this wealth of studies is the existence of a 1-form $\xi^\flat$ (recall this ${}^\flat$-duality throughout the text is relative to the un-weighted metric $\pi^*g_{_M}\oplus{\pi^\star}g_{_{E}}$). We remark $\xi^\flat$ agrees with the metric-dual to the Liouville 1-form $\lambda$ on the manifold $E^*=T^*M$, which induces the well-known symplectic structure $\dx\lambda$ over the most important phase-space of Hamiltonian mechanics (relations within this point and the geodesic spray vector field may be seen in \cite{Alb2012}).

Notice we are now talking about $E=TM$ and so another natural 1-form $\theta$ may be brought into the picture; $\theta$ is precisely the horizontal dual form of the form $\xi^\flat$. One of the most general Cheeger-Gromoll metrics appearing in references above is thus defined by
\begin{equation}
 G=\mg_{_{M,TM}}+f_3\xi^\flat\otimes\xi^\flat+f_4\theta\otimes\theta+ e^{2\varphi_5}(\theta\otimes\xi^\flat+\xi^\flat\otimes\theta)
\end{equation}
with $f_3,f_4,\varphi_5$ scalar functions such that $e^{2\varphi_2}+f_3>0,\ e^{2\varphi_1}+f_4>0$.

Now, we are interested in the total space of any vector bundle $E\lrr M$ given in general. Hence there is no reason for considering the form $\theta$. We shall give the name \textit{Musso-Tricerri metric} on the manifold $E$ to the metric defined by
\begin{equation}\label{metricMussoTricerri}
 \mg_{_{M,E}}^{\mathrm{MT}}=\mg_{_{M,E}}+f_3\xi^\flat\otimes\xi^\flat
\end{equation}
with $e^{2\varphi_2}+f_3>0$ and $f_3:E\lrr\R$ smooth. The Levi-Civita connection is of the form $\na^{^{M,E}}+L$, with $L$ non trivial but not difficult to find. We shall not carry this study here, which stems from the tangent bundle particular setting.

We recall now that L.~B\'erard Bergery introduced a Riemannian structure which is defined as a particular case of what follows next (cf. \cite{BBergery} and \cite{BeleWei} or \cite{Tapp} for recent applications). First, Euclidean space $\R^k$ is identified with $S^{k-1}\times\R^+_0/\sim$, with the sphere $S^{k-1}\times\{0\}$ collapsed to a point. Euclidean metric at any point $(x,t)$ in polar coordinates is easily seen to be $g_{_{\mathrm{Euc}}}=t^2g_{_{S^{k-1}}}+(\dx t)^2$. A radial deformation of this metric is then achieved by
\begin{equation}
 g_{_f}=f^2(t)g_{_{S^{k-1}}}+(\dx t)^2
\end{equation}
for any given smooth odd function $f=f(t):\R\lrr\R$.

Now given the vector bundle $\pi:E\lrr M$ and any two smooth functions $\varphi_1(r),\varphi_2(r)$, satisfying the usual hypothesis, and supposing we are given the same function $f$ as above, we define a \textit{generalized Bergery metric} on $E$ through the canonical splitting \eqref{canonicaldecompo} and the formula
\begin{equation}\label{metricBergery}
  \mg_{_{M,E,f}}=e^{2\varphi_1}\pi^*g_{_M}\oplus e^{2\varphi_2}{\pi^\star}{g_{_{E}}}_f\ .
\end{equation}
This is clearly the usual metric $\mg_{_{M,E}}$ with radial weights we have been treating, when $f$ is the identity. Recall we have denoted the squared-norm as $r=t^2$ in previous sections.

The above generalizes the Riemannian metric of B.~Bergery, which is constructed only, to the best of our knowledge, on the trivial flat vector bundle $E=M\times\R^k$ and the function $\varphi_2=0$.

The condition to have a complete metric in the trivial bundle case is the following: $M$ must be complete, as well as the fibres, and also $\varphi_1$ smoothly extendible to 0, cf. \cite{BBergery}. For the fibres, completeness is the same as $f(t)>0,\ \forall t>0$, and $f'(0)=1$, cf. \cite{BBergery}.
\begin{teo}\label{metricsthesame}
 Any \emph{complete} generalized Bergery metric on $E$ in the above conditions is conformally equivalent to the Musso-Tricerri metric. More precisely, with $t=\sqrt{r}$ and $f$ function of $t$, then  $f^2/t^2$ is smooth on $\R$ and
 \begin{equation}\label{conformalequivalence}
  \mg_{_{M,E,f}}=\frac{f^2}{r} \Bigl(\frac{r}{f^2} e^{2\varphi_1}\pi^*g_{_M}+e^{2\varphi_2}{\pi^\star}g_{_E}+ e^{2\varphi_2}\bigl(\frac{1}{f^2}-\frac{1}{r}\bigr)\xi^\flat\otimes\xi^\flat\Bigr)\ .
 \end{equation}
 \end{teo}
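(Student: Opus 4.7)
The plan is to verify (\ref{conformalequivalence}) by a direct pointwise computation, exploiting the canonical orthogonal decomposition $TE \simeq \pi^*TM \oplus {\pi^\star}E$. Before starting, I would record the preliminary observation that $f^2/r$ extends smoothly to all of $\R$: since $f$ is smooth and odd, a standard Taylor argument writes $f(t) = t\, h(t^2)$ for some smooth $h$, hence $f^2/t^2 = h(t^2)^2$ is smooth as a function of $r = t^2$. Under the completeness hypothesis ($f'(0)=1$, $f>0$ on $(0,\infty)$) one has $h(0)=1$ and $h>0$, so the conformal factor $f^2/r$ is smooth and strictly positive throughout $E$.

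Both sides of (\ref{conformalequivalence}) split into horizontal, vertical and mixed components under $TE \simeq \pi^*TM \oplus {\pi^\star}E$. Since $\xi^\flat$ annihilates $\pi^*TM$, the horizontal block of the right-hand side reduces to $\frac{f^2}{r}\cdot\frac{r}{f^2} e^{2\varphi_1}\pi^*g_{_M} = e^{2\varphi_1}\pi^*g_{_M}$, which matches the horizontal block of $\mg_{_{M,E,f}}$, while the mixed blocks vanish on both sides by construction. The substance of the theorem is thus the vertical identity, which by tensoriality and the local trivialization ${\pi}^{-1}(U) \simeq U \times \R^k$ reduces to the fibrewise assertion on $\R^k$:
$$ g_f \;=\; \frac{f^2}{r}\, g_{\mathrm{Euc}} \;+\; \Bigl(\frac{1}{r} - \frac{f^2}{r^2}\Bigr)\, \xi^\flat \otimes \xi^\flat\ . $$

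To verify this at a point $y\neq 0$, I would decompose any $v \in T_y\R^k$ into its Euclidean-orthogonal radial and tangential-to-sphere parts $v = v_r + v_\perp$ with $v_r = (g_{\mathrm{Euc}}(v,\xi)/r)\,\xi$. From the polar expressions $g_{\mathrm{Euc}} = t^2 g_{_{S^{k-1}}} + (\dx t)^2$ and $g_f = f^2 g_{_{S^{k-1}}} + (\dx t)^2$, the two metrics coincide on the radial line and differ by the factor $f^2/t^2$ on sphere-tangent directions. Combined with the identity $g_{\mathrm{Euc}}(v_r,v_r) = \xi^\flat(v)^2/r$, a short algebraic manipulation gives $g_f(v,v) = \frac{f^2}{r}g_{\mathrm{Euc}}(v,v) + (1 - f^2/r)\,\xi^\flat(v)^2/r$, which polarizes to the required fibrewise identity. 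Pulling back by $\pi$, multiplying by $e^{2\varphi_2}$, restoring the horizontal piece and factoring $f^2/r$ then produces (\ref{conformalequivalence}).

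The main obstacle lies not in the algebraic manipulation but in confirming that the inner bracketed expression is genuinely of Musso-Tricerri type (\ref{metricMussoTricerri}), i.e.\ that the modified horizontal weight $(r/f^2)\,e^{2\varphi_1}$ and the coefficient $f_3 = e^{2\varphi_2}(1/f^2 - 1/r)$ of $\xi^\flat \otimes \xi^\flat$ are smooth functions of $r$ up to and including the zero section, and that the Musso-Tricerri positivity constraint is preserved. Both smoothness issues are settled via $f(t) = t\,h(t^2)$: one has $r/f^2 = 1/h(r)^2$, smooth because $h(0)^2 = 1$, and $1/f^2 - 1/r = (1-h(r)^2)/(r\,h(r)^2)$, smooth because $1-h(r)^2$ vanishes at $r=0$. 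The positivity $e^{2\varphi_2} + f_3\,r = e^{2\varphi_2}\cdot r/f^2 > 0$ follows directly, so the bracketed tensor is an admissible Musso-Tricerri metric and the proof is complete.
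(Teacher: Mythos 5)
Your proposal is correct and follows essentially the same route as the paper: reduce to the fibre, use the polar expressions $g_{_{\mathrm{Euc}}}=t^2g_{_{S^{k-1}}}+(\dx t)^2$, $g_{_f}=f^2g_{_{S^{k-1}}}+(\dx t)^2$ together with $\dx\sqrt{r}=\frac{1}{\sqrt r}\xi^\flat$, perform the same algebraic rearrangement (yours via evaluation on $v=v_r+v_\perp$ and polarization, the paper's directly on the tensors), and then settle smoothness across the zero section using the oddness of $f$. Your factorization $f(t)=t\,h(t^2)$ with $h$ smooth, $h(0)=1$, is in fact a cleaner and slightly stronger treatment of that last point than the paper's Mac-Laurin/Cauchy-rule remark, which for non-analytic $f$ only claims continuous differentiability of $f^2/t^2$ at $0$.
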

\begin{proof}
The problem lies first within the fibres so we simplify computations by hiding the horizontal part. Recall $\dx r=2\xi^\flat$ from \eqref{derivativeofr}. Hence
   \begin{align*}
{\pi^\star}g_{_{E,f}}&=f^2(\sqrt{r})g_{_{S^{k-1}}}+(\dx \sqrt{r})^2\\
&=f^2g_{_{S^{k-1}}}+\frac{1}{r}\xi^\flat\otimes\xi^\flat\\
&=\frac{f^2}{r}\bigl(rg_{_{S^{k-1}}}+\frac{1}{r}\xi^\flat\otimes\xi^\flat\bigr)
+\bigl(1-\frac{f^2}{r}\bigr)\frac{1}{r}\xi^\flat\otimes\xi^\flat\\
&=\frac{f^2}{r}\Bigl({\pi^\star}g_{_{E}}+ 
\bigl(\frac{1}{f^2}-\frac{1}{r}\bigr)\xi^\flat\otimes\xi^\flat\Bigr)\ .
\end{align*}
Hence the smooth conformal factor expressed in the formula, at least away from $t=0$. The Mac-Laurin expansion $f(t)=t+\frac{f'''(0)}{6}t^3+\ldots$ shows that $\psi(t)=f^2(t)/t^2$ is smooth everywhere. In case $f$ is not analytic, the formula still indicates that $\psi$ is continuously differentiable at 0 --- which can be proved using the Cauchy rule. The conformal factor is thus smooth with respect to $t$, as required, and well-defined. Finally the summand 
$(\frac{1}{f^2}-\frac{1}{r})\xi^\flat\otimes\xi^\flat$, required for the Musso-Tricerri metric, is smooth by the same reason and has norm $\frac{r}{f^2}-1$, which tends to 0 when $t\rr0$. Regarding the weights, the smoothness of the metric is now proved.
\end{proof}

\section{The Riemannian curvature of $\mg_{_{M,E}}$}

\subsection{Two observations}
\label{Twoobserv}

Let us begin this section with a model example and a remark which is somewhat related to curvature.

Suppose $M$ is a K\"ahler manifold of real dimension $m$, with parallel K\"ahler form $\omega\in\Gamma(M;E)$ where $E=\Lambda^2T^*M$. The rank $k=({}^m_{2})$ vector bundle $E$ is endowed with an induced metric structure on its fibres and with a compatible metric connection. We may hence consider the metric $\mg_{_{M,E}}$ with constant coefficients or any other. For the moment we let $\varphi_1,\varphi_2$ be constant. The image $\omega(M)=M^\omega$ inside $E$ is a totally geodesic submanifold (cf. proposition \ref{totallygeodesicsections}). Its tangent space is $\calH^{{D}^{^E}}$. Thus a trivialization chart of $E$ corresponds with a de \!Rham decomposition of $E$ near each point $t\omega_x,\ \forall x\in M,\  t\in\R$. We consider now the vector bundle $E^0=\omega^\perp$ over $M$, i.e. $E^0_x=\{e\in E:\ \langle e,\omega\rangle_{_E}=0\}$. Then $E^0$ is a submanifold of dimension $m+k-1$. Since it is a parallel sub-vector bundle, the manifold $(E^0,\mg_{_{M,E^0}})$ is totally geodesic in $(E,\mg_{_{M,E}})$. Its tangent bundle is given by the perpendicular to the vector field  $Y={\pi^\star}\omega$, the vertical lift, which is a $\mg_{_{M,E}}$-parallel vector field on $E$, as follows from theorem \ref{conditionsforparallelvectorfields}.

The above picture is quite irrelevant, was it not true that one can find it with any vector bundle with a parallel section, such as the obvious compatible almost complex structure $J\in\Gamma(\sol(TM))$ or even the metric $g_{_M}\in\Gamma S^2(T^*M)$ in the general real manifold context. We remark the holonomy of each respective $E^0$ is closely related to that of $E$. Moreover, further relations between algebra and geometry follow from the natural Riemannian bundle structures with canonical metric structure, such as $\Lambda^pE,\ S^pE$ or the tensor product of two distinct Riemannian vector bundles.

The 5-dimensional manifold $S^2T^*M$ or its 4-dimensional subspace $E^0=\{g_{_M}\}^\perp$, associated to any given Riemann surface, have interesting computable geometry which may be studied elsewhere. Hopefully one might be able to find $\mathrm{SO}(3)$ holonomy with irreducible non-trivial representation in $\mathrm{SO}(5)$, for well chosen weight functions. Besides, with low dimensional base spaces one can actually concentrate on a nearly infinite number of worthy examples.

We shall proceed, in a section below, to compute the Riemann curvature tensor of $\mg_{_{M,E}}$ at the zero section $O_M$. The purpose is to find in quick steps some relevant information about the Riemannian holonomy of $E$, and proceed with applications. The following observation seems to have some \textit{originality} within the extensive literature of generalized Sasaki metrics and metrics on vector bundle manifolds.

First one recalls the general theory of connections which says the holonomy group is an invariant, up to conjugation, by parallel transport over the immersed curves of class $\mathrm{C}^j,\ \forall 1\leq j\leq\infty$, inside the connection's structure Lie group and over each connected component of the manifold (cf. \cite{Joy,KobNomi}). 

For the Riemannian manifold $(E,\mg_{_{M,E}})$, we thus find some information on the holonomy group if we find it on a point of the zero section $O_M$, just because vector spaces are connected. Of course, this does not (always) prevent from having to study parallel transport of a given structure when one wishes to infer a global statement supported on the local holonomy, which, as it is well-known, is closely related to the curvature tensor.

We call \textit{local} holonomy that which is produced by the curvature tensor at a given point or subset of points. The theorem of Ambrose-Singer, well-known as a global statement, says the holonomy group of the manifold is known when the curvature tensor is known everywhere.

\subsection{Flat vector bundle}
\label{Fvbacc}

Here we study the curvature of a simple case of the metric $\mg_{_{M,E}}$. We assume $D^{^E}$ is flat and hence the setting is also a generalization to flat vector bundles of the trivial product bundle, studied by Bergery, with $f(t)=t$ as introduced in section \ref{CGMTagBm}. 

Let us take the connections ${D^{**}}=\pi^*\na^{^M}\oplus{\pi^\star}D^{^E}$ and ${\widetilde{D}}={D^{**}}+C$ defined earlier for theorem \ref{TeoremaformuladeCcoeficientes}, which now are both torsion free. On the way we are assuming two given functions $\varphi_1,\varphi_2$ of the squared-radius. The following formulas are easy to check. Of course the Levi-Civita connection is $\na^{^{M,E}}={\widetilde{D}}$ so we shall use ${\widetilde{R}}$ for the Riemannian curvature tensor of $\mg_{_{M,E}}$. The following computations may be of some originality:
 \begin{align}
  {\widetilde{R}}(X^h,Y^h)Z^h &= \label{curvaturaflathhh} \pi^*R^M(X^h,Y^h)Z^h+4r{\varphi_1'}^2e^{2\varphi_1-2\varphi_2}(X^h\wedge Y^h)(Z^h) \\
 {\widetilde{R}}(X^h,Y^h)Z^v &= 0 \\
 {\widetilde{R}}(X^h,Y^v)Z^h &= e^{2\varphi_1-2\varphi_2}\langle X^h,Z^h\rangle\bigl(4(\varphi_1''+{\varphi_1'}^2-2\varphi_1'\varphi_2')
 \xi^\flat(Y^v)\xi+2(2r\varphi_1'\varphi_2'+\varphi_1')Y^v\bigr)   \label{curvaturaflathvh} \\
 {\widetilde{R}}(X^h,Y^v)Z^v &= \bigl(4(2\varphi_1'\varphi_2' -{\varphi_1'}^2-\varphi_1'')\xi^\flat(Y^v)\xi^\flat(Z^v)-2(2r\varphi_1'\varphi_2'+ 
     \varphi_1')\langle Y^v,Z^v\rangle\bigr)X^h \label{curvaturaflathvv} \\
 {\widetilde{R}}(X^v,Y^v)Z^h &= 0 \\
 \begin{split}\label{curvaturaflatvvv}
  {\widetilde{R}}(X^v,Y^v)Z^v &= 4(\varphi_2''-{\varphi_2'}^2)\bigl(\xi^\flat(Z^v)(X^v\wedge Y^v)(\xi)-\langle X^v\wedge Y^v,\xi\wedge Z^v\rangle\xi\bigr)+ \\
  & \hspace{3cm} +4(\varphi_2'+r{\varphi_2'}^2)(X^v\wedge Y^v)(Z^v) 
 \end{split}\ .
 \end{align}
We use $(u\wedge v)z=\langle u,z\rangle v-\langle v,z\rangle u$. Constant curvature $K$ corresponds thus to ${\widetilde{R}}(u,v)z=-K(u\wedge v)z$. We also notice \eqref{curvaturaflatvvv} is a slight difference form of, but related to \eqref{curvaturaseccionalvvv}. Of course the expected Riemannian symmetries are confirmed. We also hope the formulas are useful to the reader. These computations are interesting for further study, to be carried elsewhere. E.g. there may exist Einstein metrics.

\subsection{The Riemannian curvature at the zero section}
\label{TRctatzs}

Back in the general setting let us again consider the connections ${D^{**}}$ and ${\widetilde{D}}={D^{**}}+C$. We show here the computations of the curvature in general form. Let us denote
\begin{equation}\label{curvatura1}
 R^{{\mg_{_{M,E}}}}=R^{\na^{^{M,E}}}\ .
\end{equation}
Since $\xi=0$ on $O_M$ we have $C\wedge C\:_{\mid_o}=0$ at any given point $o\in O_M$ of the zero section. It then follows by definition that
\begin{equation}\label{curvatura2}
 \begin{split}
   R^{\widetilde{D}}\ _{\mid_o} 
   &=\,R^{D^{**}}+\dx^{D^{**}}C\ \ _{\mid_o}\ .
 \end{split}
\end{equation}
Following \eqref{theLeviCivitaconnection}, the same reasons imply
\begin{equation}\label{curvatura3}
\begin{split}
R^{\mg_{_{M,E}}}\ _{\mid_o}=&\, R^{\widetilde{D}}+\dx^{\widetilde{D}}(A-\frac{1}{2}\calR)\ \  _{\mid_o}\\
=&\, R^{D^{**}}+\dx^{D^{**}}C +\dx^{D^{**}}(A-\frac{1}{2}\calR)\ \ _{\mid_o} \ .
 \end{split}
\end{equation}
Now, $X(\langle\xi,Y\rangle_{_E})=\langle X,Y\rangle_{_E}+\langle\xi, {\pi^\star}D^{^E}_XY\rangle_{_E}$ and hence, $\forall X,Y,Z,W\in TE$,
\begin{equation}\label{curvatura4aux}
 \begin{split}
  ({D^{**}}_XC_Y)Z\ \ _{\mid_o}=&\, {D^{**}}_X(C_YZ)-C_Y({D^{**}}_XZ)\ \ _{\mid_o}\\
  =&\,a\langle X,Y\rangle_{_E}Z^h+a\langle X,Z\rangle_{_E}Y^h+ c_1\langle Y,Z\rangle_{_M}X^v+\\
  &\ \ +c_2\langle Y,Z\rangle_{_E} X^v+b\langle X,Y\rangle_{_E}Z^v+b\langle X,Z\rangle_{_E}Y^v  \ .
 \end{split}
\end{equation}
 Here, $a=a_{\mid_0},\ b=b_{\mid_0}$, etc, just as for all other scalar functions --- we recall, $c_1e^{2\varphi_2}=-ae^{2\varphi_1},\ a=2\varphi_1',\ b=2\varphi_2'=-c_2$. From this last we have
\begin{equation}\label{curvatura5}
 \begin{split}
(\dx^{D^{**}}C)(X,Y)Z\ \ _{\mid_o}=&\, ({D^{**}}_XC_Y)Z-({D^{**}}_YC_X)Z-C_{[X,Y]}Z\ \ _{\mid_o}\\
  =&\,a\langle X,Z\rangle_{_E}Y^h-a\langle Y,Z\rangle_{_E}X^h+ c_1\langle Y,Z\rangle_{_M}X^v-c_1\langle X,Z\rangle_{_M}Y^v+\\
  &\ \ +2b\langle X,Z\rangle_{_E}Y^v-2b\langle Y,Z\rangle_{_E} X^v  \ .
 \end{split}
\end{equation}
Since
\begin{equation}\label{curvatura6aux}
\begin{split}
 {\widetilde{D}}_X(\calR(Y,Z))\ \ _{\mid_o}
 &\,= {\pi^\star}D^{^E}_X({\pi^\star}R^E(Y,Z)\xi)\ \ _{\mid_o}\\
 &\,= ({\pi^\star}D^{^E}_X{\pi^\star}R^E(Y,Z))\xi+{\pi^\star}R^E(Y,Z){\pi^\star}D^{^E}_X\xi\ \ _{\mid_o}\\
&\,=R^E(Y,Z)X^v
 \end{split}
\end{equation}
(with notation slightly abbreviated), we then have
\begin{equation}\label{curvatura7aux}
 ({\widetilde{D}}_X\calR_Y)Z\ \ _{\mid_o}= {\widetilde{D}}_X(\calR(Y,Z))-\calR(Y,{\widetilde{D}}_XZ)\ \ _{\mid_o}=R^E(Y,Z)X^v
\end{equation}
and
\begin{equation}\label{curvatura8aux}
 \begin{split}
\mg_{_{M,E}}(\widetilde{D}_Y(A(X,Z)),W)\ _{\mid_o}
 &=\,Y\bigl(\mg_{_{M,E}}(A(X,Z),W)\bigr) -\mg_{_{M,E}}(A(X,Z),\widetilde{D}_YW)\ _{\mid_o}\\
  &=\,\frac{1}{2}Y\bigl(e^{2\varphi_2}(\langle\calR(X,W),Z\rangle_{_E}+\langle\calR(Z,W),X\rangle_{_E})\bigr)\ _{\mid_o}\\
  &=\,\frac{1}{2}e^{2\varphi_2}\bigl(\langle R^E(X,W)Y^v,Z\rangle_{_E}+ \langle R^E(Z,W)Y^v,X\rangle_{_E}\bigr)\ .
 \end{split}
\end{equation}
Finally
\begin{equation}\label{curvatura9}
\begin{split}
   \lefteqn{  \mg_{_{M,E}}(\dx^{\widetilde{D}}(A-\frac{1}{2}\calR)(X,Y)Z,W)\ _{\mid_o}\,= }\\
 &\hspace{1cm}\,=   \mg_{_{M,E}}({\widetilde{D}}_X(A-\frac{1}{2}\calR)_Y\,Z -{\widetilde{D}}_Y(A-\frac{1}{2}\calR)_X\,Z,W)\ _{\mid_o}\\
  &\hspace{1cm}\,=  \mg_{_{M,E}}({\widetilde{D}}_X((A-\frac{1}{2}\calR)(Y,Z)) -{\widetilde{D}}_Y((A-\frac{1}{2}\calR)(X,Z)),W)\ _{\mid_o}\\
  &\hspace{1cm}\,=\frac{1}{2}e^{2\varphi_2}\bigl(\langle R^E(Y,W)X^v,Z\rangle_{_E}+\langle R^E(Z,W)X^v,Y\rangle_{_E}\\
  &\hspace{2cm} \ \ -\langle R^E(Y,Z)X^v,W\rangle_{_E}-\langle R^E(X,W)Y^v,Z\rangle_{_E}\\
  &\hspace{2cm} \ \ -\langle R^E(Z,W)Y^v,X\rangle_{_E}+\langle R^E(X,Z)Y^v,W\rangle_{_E}\bigr)\ .
\end{split}
\end{equation}
Letting $R^{\mg_{_{M,E}}}(X,Y,Z,W)=\mg_{_{M,E}}(R^{\mg_{_{M,E}}}(X,Y)Z,W)$, we may see again \eqref{curvatura3},\eqref{curvatura5},\eqref{curvatura9} and clearly deduce a set of formulas. First recall that
\begin{equation}\label{curvatura10}
 R^{D^{**}}=\pi^*R^M\oplus{\pi^\star}R^E\ .
\end{equation}
\begin{teo}\label{curvatura11}
 Let $x\in M,\ o\in O_M\subset E$ with $\pi(o)=x$. Then at point $o$
 \begin{align}
 %\mbox{for}\ X,Y,Z,W\ \mbox{horizontal}, \qquad 
 R^{\mg_{_{M,E}}}_o(X^h,Y^h,Z^h,W^h) &  = e^{2\varphi_1}\langle\pi^*R^M_x(X^h,Y^h)Z^h,W^h\rangle_{_M} \\ 
 % \mbox{for}\ X,Y,Z\ \mbox{hor.},\ W\ \mbox{vertical}, \qquad 
 R^{\mg_{_{M,E}}}_o(X^h,Y^h,Z^h,W^v) & =0  \\
% \mbox{for}\ X,Y\ \mbox{hor.} , \ Z,W\ \mbox{vert.}, \qquad  
R^{\mg_{_{M,E}}}_o(X^h,Y^h,Z^v,W^v) &=
       e^{2\varphi_2}\langle{\pi^\star}R^E_x(X^h,Y^h)Z^v,W^v\rangle_{_E} \\
% \mbox{for}\ X,Z\ \mbox{hor.},\ Y,W\ \mbox{vert.}, \qquad \hspace{8cm} \nonumber \\
 R^{\mg_{_{M,E}}}_o(X^h,Y^v,Z^h,W^v) &= ae^{2\varphi_1}\langle X^h,Z^h\rangle_{_M}
 \langle Y^v,W^v\rangle_{_E} +\frac{1}{2}e^{2\varphi_2} \langle{\pi^\star}R^E_x(X^h,Z^h)Y^v,W^v\rangle_{_E}   \\ 
 % \mbox{for}\ X,Y\ \mbox{vert.} , \ Z,W\ \mbox{hor.}, \qquad 
  R^{\mg_{_{M,E}}}_o(X^v,Y^v,Z^h,W^h) &= e^{2\varphi_2}\langle{\pi^\star}R^E_x(Z^h,W^h)X^v,Y^v\rangle_{_E} \\
%  \mbox{... }, \quad \quad  
R^{\mg_{_{M,E}}}_o(X^v,Y^v,Z^v,W^h) &=0 \\
  R^{\mg_{_{M,E}}}_o(X^h,Y^v,Z^v,W^v) &=0  \\
  R^{\mg_{_{M,E}}}_o(X^v,Y^v,Z^v,W^v) &=
 -2be^{2\varphi_2}(\langle X^v,W^v\rangle_{_E}\langle Y^v,Z^v\rangle_{_E}
 -\langle X^v,Z^v\rangle_ {_E}\langle Y^v,W^v\rangle_{_E})\ .
 \end{align}
Recall, here, $a=a_{\mid_0},\ b=b_{\mid_0}$, etc, as well as with all other scalar functions.
\end{teo}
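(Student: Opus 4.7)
The plan is to invoke the decomposition \eqref{curvatura3}, which at the point $o\in O_M$ splits
\[
 R^{\mg_{_{M,E}}}\ _{\mid_o}\,=\,R^{D^{**}}+\dx^{D^{**}}C+\dx^{\widetilde{D}}(A-\tfrac{1}{2}\calR)\ _{\mid_o}\ .
\]
Each of the three summands has already been made explicit: the first by \eqref{curvatura10}, the second by \eqref{curvatura5}, and the third by the six-term expression \eqref{curvatura9}. The theorem then follows by evaluating each piece on all horizontal/vertical configurations of $(X,Y,Z,W)$ and reading off which terms survive.

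Three vanishing principles do nearly all the work. First, since $D^{**}=\pi^*\na^{^M}\oplus{\pi^\star}{D}^{^E}$ preserves the horizontal and vertical distributions and depends on the first two arguments only through their images under $\dx\pi$, the curvature $R^{D^{**}}$ in \eqref{curvatura10} is block-diagonal: it contributes the $\pi^*R^M$ piece precisely when $Z,W$ are both horizontal and the $\pi^\star R^E$ piece precisely when they are both vertical, in both cases depending on $(X,Y)$ through $(X^h,Y^h)$ only. Second, in \eqref{curvatura5} the brackets $\langle\,,\,\rangle_M$ pair non-trivially only two horizontal entries while $\langle\,,\,\rangle_E$ pairs only two vertical entries, so each of the six summands of $\dx^{D^{**}}C$ is readily tested against each signature of $(X,Y,Z)$. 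Third, $R^E$ is a 2-form on $M$ pulled back to $E$, hence $R^E(\cdot,\cdot)=R^E((\cdot)^h,(\cdot)^h)$ and the superscript $(\cdot)^v$ kills horizontal inputs; consequently every summand of \eqref{curvatura9} with a vertical vector in one of the first two slots of $R^E$, or with a horizontal vector where the superscript $v$ appears, vanishes at $o$.

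With these principles the eight listed cases reduce to short verifications. In the purely horizontal case only $R^{D^{**}}$ survives, and in $(h,h,v,v)$ likewise only the $\pi^\star R^E$ block of $R^{D^{**}}$ survives; the configurations $(h,h,h,v)$, $(v,v,v,h)$ and $(h,v,v,v)$ vanish identically. The case $(v,v,v,v)$ is governed solely by the $2b$-summands of \eqref{curvatura5}, which deliver precisely $-2be^{2\varphi_2}(\langle X,W\rangle_E\langle Y,Z\rangle_E-\langle X,Z\rangle_E\langle Y,W\rangle_E)$. The case $(v,v,h,h)$ is governed by \eqref{curvatura9}: only terms 2 and 5 survive, and the skew-symmetry of $R^E$ in its last two arguments (as $D^{^E}$ is metric) combines them into a single factor $e^{2\varphi_2}\langle{\pi^\star}R^E(Z,W)X,Y\rangle_E$.

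The one case requiring an algebraic adjustment is the cross-term $(X^h,Y^v,Z^h,W^v)$, where both $\dx^{D^{**}}C$ and $\dx^{\widetilde{D}}(A-\tfrac{1}{2}\calR)$ contribute. The single surviving summand of \eqref{curvatura5} is $-c_1\langle X,Z\rangle_M Y^v$; pairing against $W^v$ under $\mg_{_{M,E}}$ produces $-c_1e^{2\varphi_2}\langle X,Z\rangle_M\langle Y,W\rangle_E$, and substituting $c_1=-ae^{2\varphi_1-2\varphi_2}$ from \eqref{formuladeCcoeficientes} delivers exactly $ae^{2\varphi_1}\langle X,Z\rangle_M\langle Y,W\rangle_E$. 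The only surviving term of \eqref{curvatura9} is the sixth, contributing the complementary $\tfrac{1}{2}e^{2\varphi_2}\langle{\pi^\star}R^E(X,Z)Y,W\rangle_E$. The main obstacle is therefore not depth of argument but patience in confirming case by case which of the many formally present tensorial pieces actually survive the horizontal/vertical bookkeeping at $o$.
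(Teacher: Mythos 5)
Your proposal is correct and follows essentially the same route as the paper: the paper also obtains Theorem \ref{curvatura11} by evaluating the decomposition \eqref{curvatura3}, via \eqref{curvatura10}, \eqref{curvatura5} and \eqref{curvatura9}, on the horizontal/vertical configurations at a zero-section point (where $\xi=0$ kills $C$, $A$ and $\calR$), merely leaving the case-by-case bookkeeping implicit. Your surviving-term analysis, including the $c_1=-ae^{2\varphi_1-2\varphi_2}$ substitution in the mixed case and the use of skew-symmetry of $R^E$ in the $(v,v,h,h)$ case, reproduces exactly what the paper intends.
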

Theorem \ref{curvatura11} can also be checked case by case, as it is done in some references for the curvature of the Sasaki and generalized Sasaki metrics on $E=TM$, cf. \cite{Kow1,Seki}. Now recall $E$ has rank $k$ and suppose $M$ has dimension $m$. In the following result it is a remarkable surprise that the curvature of $D^{^E}$ has completely disappeared.
\begin{teo}\label{curvaturasdeRiccieescalar}
 The Ricci tensor $\ric^{\mg_{_{M,E}}}(X,Y)=\tr{R^{\mg_{_{M,E}}}(\ ,X)Y}$ and the scalar curvature $\Scal^{\mg_{_{M,E}}}=\mathrm{tr}_{\mg_{_{M,E}}}\ric^{\mg_{_{M,E}}}$ satisfy ($a=a_{\mid_0},\ b=b_{\mid_0}$, as well as with all other scalar functions):
 \begin{align}
  \ric^{\mg_{_{M,E}}}_o(X^h,W^h)&=\ric^M_x(X^h,W^h)-ak e^{2(\varphi_1-\varphi_2)}\langle X^h,W^h\rangle_{_M}\\
  \ric^{\mg_{_{M,E}}}_o(X^h,W^v)&=0\\
  \ric^{\mg_{_{M,E}}}_o(X^v,W^v)&=(2b(1-k)-am)\langle X,W\rangle_{_E}
\end{align}
and also at $o$
\begin{align}
  \Scal^{\mg_{_{M,E}}}_o&= e^{-2\varphi_1}\Scal^M_x+e^{-2\varphi_2}(2bk(1-k)-2akm)\ .
 \end{align}
\end{teo}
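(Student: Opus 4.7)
Since Theorem \ref{curvatura11} already tabulates every component of $R^{\mg_{_{M,E}}}_o$ on a mixed horizontal-vertical basis, the plan is to obtain $\ric$ and $\Scal$ at $o$ by a direct orthonormal-frame trace. I would first pick a $g_{_M}$-orthonormal frame $X_1,\ldots,X_m$ on a neighbourhood of $x = \pi(o)$ and a $g_{_E}$-orthonormal frame $e_1,\ldots,e_k$ of $E$ near $x$. The rescaled lifts $e^{-\varphi_1}X_i^h$ and $e^{-\varphi_2}e_\alpha^v$ are then an $\mg_{_{M,E}}$-orthonormal basis of $T_oE$ (with $\varphi_1,\varphi_2$ evaluated at $r=0$), so for any $X,Y \in T_oE$ the Ricci tensor reads
\[
 \ric^{\mg_{_{M,E}}}_o(X,Y) \,=\, e^{-2\varphi_1}\sum_{i=1}^m R^{\mg_{_{M,E}}}_o(X_i^h, X, Y, X_i^h) \,+\, e^{-2\varphi_2}\sum_{\alpha=1}^k R^{\mg_{_{M,E}}}_o(e_\alpha^v, X, Y, e_\alpha^v).
\]

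The core of the argument is to evaluate these two sums in each of the three sub-cases $(X,Y)=(X^h,W^h),\,(X^h,W^v),\,(X^v,W^v)$, relabelling every summand into one of the eight slot-configurations listed in Theorem \ref{curvatura11} via the usual Riemann symmetries $R_{ABCD}=-R_{BACD}=-R_{ABDC}=R_{CDAB}$. The horizontal-horizontal case yields $\ric^M(X,W)$ from the first summand and a multiple of $\langle X,W\rangle_{_M}$ from the second; the vertical-vertical case receives an $a$-contribution from the horizontal summand and a $-2b$-contribution from the vertical summand, where $\sum_\alpha\bigl(\langle e_\alpha,e_\alpha\rangle\langle X,W\rangle - \langle e_\alpha,W\rangle\langle X,e_\alpha\rangle\bigr)$ produces the factor $k-1$; the mixed case vanishes because every relabelled configuration lands on one of the zero entries of the table. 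The scalar curvature then follows by a second trace $\Scal^{\mg_{_{M,E}}}_o = e^{-2\varphi_1}\sum_i\ric_o(X_i^h,X_i^h) + e^{-2\varphi_2}\sum_\alpha\ric_o(e_\alpha^v,e_\alpha^v)$, which is routine once $\ric$ is known.

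The one simplification without which nothing would work is the identity $\sum_\alpha \langle R^E(U,V)e_\alpha,e_\alpha\rangle_{_E}=0$, which holds because $R^E$ is skew-symmetric with respect to $g_{_E}$. This is precisely what makes the $\frac{1}{2}e^{2\varphi_2}\langle R^E(\cdot,\cdot)\cdot,\cdot\rangle_{_E}$ term of the $(h,v,h,v)$-case of Theorem \ref{curvatura11} drop out of every trace, and it is the reason why $R^E$ disappears entirely from the final formulas, as announced in the statement. I expect this observation, together with careful bookkeeping of the Riemann symmetries when moving arguments between the slot positions, to be the only delicate point; everything else is an index-juggling exercise on the table of Theorem \ref{curvatura11}.
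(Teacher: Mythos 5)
Your proposal is correct and is essentially the paper's own (implicit) argument: Theorem \ref{curvaturasdeRiccieescalar} is obtained exactly by tracing the table of Theorem \ref{curvatura11} in the rescaled orthonormal frame $e^{-\varphi_1}X_i^h$, $e^{-\varphi_2}e_\alpha^v$, and your identity $\sum_\alpha\langle R^E(U,V)e_\alpha,e_\alpha\rangle_{_E}=0$ (skew-adjointness of $R^E$ for the metric connection) is precisely why the curvature of $D^{^E}$ "completely disappears", as the paper remarks. One caveat so the mismatch does not alarm you: carrying out your trace honestly gives $\ric_o(X^h,W^h)=\ric^M_x(X,W)-ak\,e^{-2(\varphi_2-\varphi_1)}\langle X,W\rangle_{_M}$, i.e.\ with a factor $k$ absent from the first displayed formula of the theorem --- this is a typo in that display, since both the stated scalar curvature $\Scal^{\mg_{_{M,E}}}_o$ and the relation $\lambda^{^E}=\lambda^{^M}e^{-2\varphi_1}-ake^{-2\varphi_2}$ of Corollary \ref{curvaturasdeRiccieescalarcorolario} are consistent only with the $k$-corrected horizontal Ricci.
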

\begin{coro}\label{curvaturasdeRiccieescalarcorolario}
 If the Riemannian manifold $E$ is Einstein, hence satisfying $\ric^{\mg_{_{M,E}}}=\lambda^{^{E}}\mg_{_{M,E}}$, then $M$ is Einstein say with Einstein constant $\lambda^{^M}$ and at $o$ we have
 \begin{equation}
  \lambda^{^M} e^{2\varphi_2-2\varphi_1}+a(m-k)+2b(k-1)=0\ .
 \end{equation}
Moreover
\begin{equation}\label{curvaturasdeRiccieescalarEinstein}
\lambda^{^E} = (2b(1-k)-am)e^{-2\varphi_2} 
=\lambda^{^M}e^{-2\varphi_1}-ake^{-2\varphi_2} \ .
\end{equation}
\end{coro}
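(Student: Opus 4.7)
The corollary follows by applying the Einstein condition $\ric^{\mg_{_{M,E}}} = \lambda^E \mg_{_{M,E}}$ at an arbitrary point $o$ of the zero section $O_M$, and comparing with the three pointwise Ricci identities of Theorem \ref{curvaturasdeRiccieescalar}. The mixed horizontal-vertical identity is automatic on both sides and carries no information; so the whole argument rests on the remaining two.

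First I would consider the horizontal-horizontal identity. Equating $\ric^{\mg_{_{M,E}}}_o(X^h, W^h)$ with $\lambda^E \mg_{_{M,E}}(X^h, W^h) = \lambda^E e^{2\varphi_1}\langle X, W\rangle_{_M}$ yields, at every $x \in M$, an identity of the form $\ric^M_x(X, W) = C \langle X, W\rangle_{_M}$, where the coefficient $C$ is a polynomial expression in $\lambda^E$ and in $\varphi_1, \varphi_2, a, k$ evaluated at $r=0$. Because $\varphi_1,\varphi_2$ are functions of $r$ alone, the quantities $a=2\varphi_1'$ and $b=2\varphi_2'$ at the zero section are genuine real constants common to every fibre; hence $C$ is a constant and $M$ is Einstein with Einstein constant $\lambda^M := C$, giving one explicit relation between $\lambda^M$, $\lambda^E$, $a$, $k$ and the weights at $r=0$.

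Next, applying the same scheme to the vertical-vertical identity immediately gives the first equality in \eqref{curvaturasdeRiccieescalarEinstein},
\[ \lambda^E e^{2\varphi_2} = 2b(1-k) - am. \]
Finally, I would combine the expression for $\lambda^M$ obtained in the first step with this vertical relation, multiplying through by $e^{2\varphi_2-2\varphi_1}$ to clear the exponentials and substituting to eliminate $\lambda^E$. This algebraic step yields the advertised identity $\lambda^M e^{2\varphi_2-2\varphi_1}+a(m-k)+2b(k-1)=0$, while the second equality in \eqref{curvaturasdeRiccieescalarEinstein} is obtained directly by rearranging the horizontal-horizontal relation to solve for $\lambda^E$ in terms of $\lambda^M$.

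The entire argument is a direct book-keeping consequence of Theorem \ref{curvaturasdeRiccieescalar}, so there is no substantive obstacle. The only conceptual point requiring attention is the passage from a pointwise proportionality $\ric^M_x \propto g_{_{M},x}$ to a genuine Einstein condition with a single constant on $M$; this passage is ensured precisely by the radial dependence of the weight functions, which makes the constants at $r=0$ the same over every fibre.
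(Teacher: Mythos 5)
Your plan is exactly the route the paper intends: the corollary carries no separate proof because it is the direct bookkeeping you describe, namely imposing $\ric^{\mg_{_{M,E}}}=\lambda^{^E}\mg_{_{M,E}}$ at a point $o\in O_M$, reading off the horizontal and vertical lines of Theorem \ref{curvaturasdeRiccieescalar}, observing that $a,b,\varphi_1,\varphi_2$ at $r=0$ are the same constants along all of $O_M$ (so $M$ is Einstein), and then eliminating $\lambda^{^E}$ between the two relations.

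There is, however, one point you pass over too quickly, and it is the only place where care is needed. The horizontal--horizontal line of Theorem \ref{curvaturasdeRiccieescalar}, as printed, is $\ric^{\mg_{_{M,E}}}_o(X^h,W^h)=\ric^M_x(X^h,W^h)-a\,e^{-2(\varphi_2-\varphi_1)}\langle X^h,W^h\rangle_{_M}$. If you ``rearrange it directly'', as you claim, you get $\lambda^{^E}=\lambda^{^M}e^{-2\varphi_1}-a\,e^{-2\varphi_2}$ and, after eliminating $\lambda^{^E}$ with the vertical relation, $\lambda^{^M}e^{2\varphi_2-2\varphi_1}+a(m-1)+2b(k-1)=0$ --- not the stated coefficients $-ak\,e^{-2\varphi_2}$ and $a(m-k)$. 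The corollary requires the horizontal Ricci entry to carry the coefficient $-ak\,e^{2(\varphi_1-\varphi_2)}$: this is what one actually obtains by tracing the mixed term $a\,e^{2\varphi_1}\langle X,Z\rangle_{_M}\langle Y,W\rangle_{_E}$ of Theorem \ref{curvatura11} over the $k$ vertical directions, and it is also the only value compatible with the scalar-curvature term $-2akm\,e^{-2\varphi_2}$ appearing in the same theorem; the printed horizontal line is evidently missing the factor $k$. So your strategy is the right one and matches the paper's, but to turn it into a proof of the stated identities you must either recompute that vertical trace (recovering the factor $k$) or explicitly reconcile the discrepancy, since the formulas you assert to fall out ``directly'' do not follow verbatim from the theorem as printed.
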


The holonomy equations given by theorem \ref{curvatura11} generate a Lie subalgebra of the orthogonal Lie algebra of $T_oE$, in matrix form respecting the canonical decomposition. There are obviously three kinds of operators $R^{\mg_{_{M,E}}}_o(X,Y)$:
\begin{equation}\label{holonomia1}
 \left[\begin{array}{cc} e^{2\varphi_1}R^M(X^h,Y^h)&0 \\ 0 &e^{2\varphi_2}R^E(X^h,Y^h)
 \end{array}\right]
\end{equation}
\begin{equation}\label{holonomia2}
 \left[\begin{array}{cc} 0&-B \\B^\dag& 0
 \end{array}\right]\quad\mbox{with}\quad
    B(X^h,Y^v)=2\varphi_1'e^{2\varphi_1}(X^h)^\flat\otimes(Y^v)^\flat+\frac{1}{2}e^{2\varphi_2}\langle R^E(X^h,\ )Y^v,\ \rangle_{_E}\\
\end{equation}
and
\begin{equation}\label{holonomia3}
           \left[\begin{array}{cc}
 e^{2\varphi_2}\langle R^E(\ ,\ )X^v,Y^v\rangle_{_E} &0\\ 0& 4\varphi_2'e^{2\varphi_2}(X^v)^\flat\wedge(Y^v)^\flat  \end{array}\right]
\end{equation}
where $B^\dag$ is the adjoint endomorphism of $B$ with respect to the product metric (not the weighted). By the celebrated Ambrose-Singer theorem these endomorphisms generate the local holonomy algebra. That is, a Lie subalgebra of the Lie algebra of the Riemannian holonomy group of $E$. It is also the moment to recall the last of the two observations in section \ref{Twoobserv}.

\subsection{The flat connection again}

Suppose $D^{^E}$ is a flat connection on $E\lrr M$ with the dimensions $m=\dim M,\ k=\mathrm{rk}\,E$. At a point $o\in O_M$ we find the Riemannian curvature of $\mg_{_{M,E}}$ from (\ref{holonomia1}---\ref{holonomia3}). We get in the most general case, i.e. when both $\varphi_1'(0),\varphi_2'(0)$ do not vanish, the 
three types of endomorphisms in $\sol(T_oE,\mg_{_{M,E}})\simeq\Lambda^2\R^{m+k}= \Lambda^2\R^{m}\oplus\p\oplus\Lambda^2\R^{k}$:
\begin{equation}\label{holonomiaflat1}
 \left[\begin{array}{cc}  R^M & 0 \\ 0 & 0    \end{array}\right]\qquad
 \left[\begin{array}{cc} 0 & -{E_i^\alpha}\\ (E_i^\alpha)^\dag  & 0 \end{array}\right]\qquad
 \left[\begin{array}{cc} 0 & 0 \\ 0 &  e^\alpha\wedge e^\beta    \end{array}\right]\qquad\forall i,j,\alpha,\beta.
\end{equation}
The matrices $E_i^\alpha=[\delta_i^p\delta_\alpha^\beta]_{p\beta}$ and those in the middle generate the subspace $\p$ of dimension $mk$. Since $[\p,\p]=\sol(m)\oplus\sol(k)$, we find the first part of the following result.
\begin{prop}\label{holonomiaflat2} Let $\hol^{\mg_{_{M,E}}}$ denote the whole holonomy Lie algebra.\\
(i) If $\varphi_1'(0)\neq0$, then $\hol^{\mg_{_{M,E}}}=\sol(m+k)$. \\
(ii) If $\varphi_1'(0)=0\neq\varphi_2'(0)$, then $\hol^{\mg_{_{M,E}}}\supseteq\hol^M\oplus\sol(k)$.\\
(iii)  $\hol^{\mg_{_{M,E}}}\supseteq\hol^M$, with equality if both $\varphi_1,\varphi_2$ are constant.
\end{prop}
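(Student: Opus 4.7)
The plan is to read off the curvature operators of $\mg_{_{M,E}}$ at a point $o\in O_M$ from Theorem \ref{curvatura11}, using the hypothesis $R^E=0$ to recognize them as precisely the three block types displayed in (\ref{holonomiaflat1}). Each such operator is a value of $R^{\mg_{_{M,E}}}_o$, hence lies in $\hol^{\mg_{_{M,E}}}$, and so does the Lie subalgebra it generates by closure of $\hol^{\mg_{_{M,E}}}$ under brackets.

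For (i), when $\varphi_1'(0)\neq 0$, the middle type of operator equals $2\varphi_1'(0)e^{2\varphi_1(0)}\neq 0$ times the matrices built out of $E_i^\alpha$; as $i,\alpha$ vary these span the $mk$-dimensional subspace $\p$ in the reductive decomposition $\sol(m+k)=\sol(m)\oplus\sol(k)\oplus\p$. A direct elementary commutator computation yields $[\p,\p]=\sol(m)\oplus\sol(k)$, so $\p+[\p,\p]=\sol(m+k)$ lies in $\hol^{\mg_{_{M,E}}}$; the opposite inclusion $\hol^{\mg_{_{M,E}}}\subseteq\sol(m+k)$ is obvious, giving equality.

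The key ingredient behind (ii) and (iii) is the inclusion $\hol^M\hookrightarrow\hol^{\mg_{_{M,E}}}$, for which I would invoke Proposition \ref{totallygeodesicfibresandparallelsubbundles}. The zero section $O_M$ is totally geodesic, with induced metric $e^{2\varphi_1(0)}g_{_M}$ -- a constant rescaling of $g_{_M}$ -- so its intrinsic Levi-Civita connection, and hence its holonomy algebra, agree with those of $(M,g_{_M})$. Since the second fundamental form of $O_M$ vanishes, $\mg_{_{M,E}}$-parallel transport along any loop in $O_M$ preserves the orthogonal decomposition $TE|_{O_M}=\pi^*TM\oplus\calV$, and its restriction to $\pi^*TM$ coincides with the intrinsic parallel transport of $O_M$. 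This realizes $\hol^M$ as a Lie subalgebra of $\hol^{\mg_{_{M,E}}}$ acting on the horizontal summand, which proves the inclusion in (iii).

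For (ii), under $\varphi_1'(0)=0\neq\varphi_2'(0)$ and $R^E=0$, the middle block in (\ref{holonomiaflat1}) vanishes, while the third is $4\varphi_2'(0)e^{2\varphi_2(0)}\neq 0$ times $(X^v)^\flat\wedge(Y^v)^\flat$ and as $X^v,Y^v$ run over an orthonormal basis spans all of $\sol(k)$ on the vertical summand; combined with $\hol^M$ on the orthogonal horizontal summand, which commutes with it, one obtains $\hol^M\oplus\sol(k)\subseteq\hol^{\mg_{_{M,E}}}$. Finally, for the equality in (iii) with $\varphi_1,\varphi_2$ both constant, a $D^{^E}$-parallel frame $e_1,\ldots,e_k$ on a simply-connected $U\subset M$ makes $\Gamma_{i\alpha}^{E,\beta}\equiv 0$, whence (\ref{decompositionofchartvectorfields}) gives $\pi^*\partial_i=\partial_i$ on $\pi^{-1}(U)\simeq U\times\R^k$ and $\mg_{_{M,E}}=e^{2\varphi_1(0)}\pi^*g_{_M}+e^{2\varphi_2(0)}\sum_\alpha(\dx y^\alpha)^2$ is a local Riemannian product; the de Rham decomposition theorem then yields $\hol^{\mg_{_{M,E}}}=\hol^M\oplus\{0\}=\hol^M$. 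The main obstacle is the third paragraph: promoting the single-point curvature image $R^M_x(\Lambda^2 T_xM)$, which is all the pointwise data at $o$ gives, to the full $\hol^M$ requires the totally-geodesic/parallel-transport mechanism, and this is what permits the claimed global Lie algebra statement.
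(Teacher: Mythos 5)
Your part (i) is exactly the paper's argument: with $\varphi_1'(0)\neq0$ and $R^E=0$ the middle block of \eqref{holonomiaflat1} spans $\p$, and $[\p,\p]=\sol(m)\oplus\sol(k)$ forces $\hol^{\mg_{_{M,E}}}=\sol(m+k)$. For (ii) and (iii) you take a genuinely different route: the paper just invokes the global flat-bundle curvature formulas \eqref{curvaturaflathhh}--\eqref{curvaturaflatvvv} (curvature at \emph{every} point of $E$, fed into Ambrose--Singer), whereas you work only at the zero section and promote the pointwise data to $\hol^M$ via the totally geodesic $O_M$ and parallel transport. That mechanism is sound, but as written it has one genuine lacuna. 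For a loop $\gamma\subset O_M$ the ambient transport is block diagonal: on the horizontal summand it is the intrinsic transport of $(M,g_{_M})$, as you say, but on $\calV|_{O_M}$ it acts by the normal connection, which (since $C$, $A$ and $\calR$ all vanish along $O_M$, so $\na^{^{M,E}}=D^{**}$ there) is exactly $D^{^E}$. Ignoring that block, your argument only produces a subalgebra of $\hol^{\mg_{_{M,E}}}$ that \emph{projects} onto $\hol^M$, not $\hol^M\oplus 0$ itself, which is what (ii) and (iii) assert. In the present section $D^{^E}$ is flat, so around null-homotopic loops the normal transport is the identity and the conclusion is saved --- but this must be said; for (ii) alone one could instead subtract the vertical components using the $\sol(k)$ block \eqref{holonomia3} you already have, while for (iii) with both $\varphi_i'(0)=0$ the flatness of this normal connection is really what is needed.

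A second, smaller point concerns the equality in (iii): the de Rham decomposition theorem is not the right tool (it requires completeness and simple connectedness for a global splitting, and a merely local product structure does not by itself control the holonomy of large null-homotopic loops). What you actually have is simpler and sufficient: with $\varphi_1,\varphi_2$ constant and $R^E=0$, $\na^{^{M,E}}=D^{**}=\pi^*\na^{^M}\oplus\pi^\star D^{^E}$, so $\calH^{{D}^{^E}}$ and $\calV$ are globally parallel (proposition \ref{totallygeodesicfibresandparallelsubbundles}(iii)); the induced connection on $\calV$ is the flat $\pi^\star D^{^E}$, the induced connection on $\calH^{{D}^{^E}}$ is the pullback $\pi^*\na^{^M}$, and Ambrose--Singer applied to the curvature $\pi^*R^M\oplus 0$ gives $\hol^{\mg_{_{M,E}}}\subseteq\hol^M$, hence equality with your inclusion. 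With these two repairs your proof is complete, and it is arguably more self-contained than the paper's, which for (ii) and (iii) leans entirely on the explicit curvature computations of section \ref{Fvbacc}; the price is that you must track the normal connection along the zero section, which the global formulas let the paper avoid.
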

The second and third assertions follow from (\ref{curvaturaflathhh}---\ref{curvaturaflatvvv}).

We see clearly now that we cannot eliminate the chance of \eqref{curvaturaflatvvv} producing a smaller vertical holonomy subalgebra in the event of just $\varphi_2'(0)=0$. But that is a matter for the strict study of spherically symmetric metrics on euclidean space.

\section{Applications}

\subsection{Hermitian tangent bundle with generalized Sasaki metric}

We now start looking for some applications of the theory above with a particular case of a well-known result. Given any Riemannian manifold $M$, the generalized Sasaki almost Hermitian structure consists of the $\mg_{_{M,E}}$-compatible almost complex structure $J^{\mathrm{S,\psi}}$ on the manifold $E=TM$ defined by
\begin{equation}
 J^{\mathrm{S,\psi}}=e^{-\psi}B-e^{\psi}B^\dag
\end{equation}
where $\psi=\varphi_2-\varphi_1$ and the endomorphism $B:TTM\lrr TTM$ is defined by $BZ^h=Z^v,\ BZ^v=0$. The map $B$ is a well-defined structure on $TM$ which cannot be reproduced on other vector bundles. It has proven quite useful in other studies, cf. \cite{Alb2008,Alb2012,Alb2014a}\footnote{We remark that the structures of generalized Sasaki type with weight functions dependent of the base point $x\in M$, rather than the squared-radius $r$, have been studied by the author in \cite{Alb2012}.}. The almost complex structure $J^{\mathrm{S,\psi}}$ clearly generalizes the case $\varphi_1=\varphi_2=0$, which we call the \textit{canonical case} and is due to S.~Sasaki. The almost Hermitian structure is trivial to check. Notice here the adjoint $B^\dag$ is with respect to the canonical case. Defining also $\overline{\psi}=\varphi_2+\varphi_1$ it follows\footnote{One may say the close relations between metric and complex structures start with the twist $\varphi_1,\varphi_2\mapsto\psi,\overline{\psi}$.} that the associated symplectic 2-form $\omega^{\mathrm{S,\overline{\psi}}}=J^{\mathrm{S,\psi}}\lrcorner\mg_{_{M,E}}$ satisfies
\begin{equation}
  \omega^{\mathrm{S,\overline{\psi}}}=e^{\overline{\psi}}\omega^{\mathrm{S,0}}\ .
\end{equation}
\begin{prop}\label{propsymplec}
In case $\dim M=m>1$, the 2-form $\omega^{\mathrm{S,\overline{\psi}}}$ is symplectic if and only if $\overline{\psi}$ is a constant. 
\end{prop}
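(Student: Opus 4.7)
The plan is to separate the two conditions defining a symplectic form. Non-degeneracy of $\omega^{\mathrm{S,\overline{\psi}}}$ is immediate: $J^{\mathrm{S,0}}$ is an almost complex structure compatible with the positive-definite un-weighted metric $\pi^*g_{_M}\oplus{\pi^\star}g_{_E}$, so $\omega^{\mathrm{S,0}}$ is non-degenerate everywhere on $TM$, and the nowhere-vanishing conformal factor $e^{\overline{\psi}}$ preserves that property. Hence the proposition reduces to showing that $d\omega^{\mathrm{S,\overline{\psi}}}=0$ if and only if $\overline{\psi}$ is constant.

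Next I would use that the canonical Sasaki 2-form is closed: via the metric isomorphism $TM\simeq T^*M$, the 1-form $\xi^\flat$ corresponds to the Liouville 1-form $\lambda$, as recalled in section \ref{CGMTagBm}, so $\omega^{\mathrm{S,0}}=\pm d\xi^\flat$ is in particular exact. Leibniz combined with $d\overline{\psi}=2\overline{\psi}'\xi^\flat$ from \eqref{derivativeofr} then gives
\begin{equation*}
d\omega^{\mathrm{S,\overline{\psi}}}\,=\,d(e^{\overline{\psi}}\omega^{\mathrm{S,0}})\,=\,2\overline{\psi}'e^{\overline{\psi}}\,\xi^\flat\wedge\omega^{\mathrm{S,0}}\ ,
\end{equation*}
and closedness becomes equivalent to $\overline{\psi}'\,\xi^\flat\wedge\omega^{\mathrm{S,0}}\equiv0$ on $TM$.

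The last step is a pointwise algebraic check off the zero section. At $u\in T_xM\setminus\{0\}$, pick a $g_{_M}$-orthonormal basis $(e_1,\ldots,e_m)$ of $T_xM$ with $e_1$ proportional to $u$; then $\xi^\flat_u=\|u\|\,e_1^{v\flat}$, while $\omega^{\mathrm{S,0}}_u=\sum_i e_i^{h\flat}\wedge e_i^{v\flat}$ from $\omega^{\mathrm{S,0}}(X^h,Y^v)=\langle X,Y\rangle_{_E}$. In the wedge, the $i=1$ summand is killed by the repeated factor $e_1^{v\flat}$, and the remaining $m-1$ summands $\|u\|\,e_1^{v\flat}\wedge e_i^{h\flat}\wedge e_i^{v\flat}$ are linearly independent. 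Since $m>1$, the wedge does not vanish at $u$. Therefore $\overline{\psi}'\equiv0$ off the zero section, and the smoothness at $r=0$ from the right extends this to all of $[0,\infty)$, so $\overline{\psi}$ is constant. The converse direction is immediate from the displayed formula.

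The main obstacle I expect is the identification $\omega^{\mathrm{S,0}}=\pm d\xi^\flat$: it is classical and explicitly flagged in the excerpt, but not proved there, so to remain self-contained one could verify $d\omega^{\mathrm{S,0}}=0$ directly by a short computation in the adapted frames $\pi^*\partial_i,{\pi^\star}e_\alpha$ of section \ref{somepropertiessection}, using the torsion formulas \eqref{equacoesdatorsao} together with $\omega^{\mathrm{S,0}}(X^h,Y^v)=\langle X,Y\rangle_{_E}$.
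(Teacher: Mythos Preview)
Your proposal is correct and follows essentially the same route as the paper: both argue that $\omega^{\mathrm{S,0}}$ is closed (indeed exact, via the Liouville form identification from section \ref{CGMTagBm}), reduce the question to $\dx\overline{\psi}\wedge\omega^{\mathrm{S,0}}=0$, and conclude $\overline{\psi}$ constant when $m>1$. The paper dismisses this last step as ``trivial reasons'' where you give an explicit pointwise check, and the alternative torsion-plus-Bianchi verification of $\dx\omega^{\mathrm{S,0}}=0$ that you mention as a fallback is exactly the second proof the paper writes out.
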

\begin{proof}
 This is easy after seeing that $\omega^{\mathrm{S,0}}$ is always an exact form (cf. the beginning of section \ref{CGMTagBm}). Then  $\dx\overline{\psi}\wedge\omega^{\mathrm{S,0}}=0$ implies $\overline{\psi}$ is constant, by trivial reasons. Here we shall restart with another proof, however, by showing that the canonical 2-form is closed. Indeed, the connection $D^{**}$, for which $B$ and the 2-form are clearly parallel, has torsion $T^{{D^{**}}}(X,Y)=\pi^\star R^E(X,Y)\xi$ due to \eqref{equacoesdatorsao}. A well-known formula says
 \begin{eqnarray*} 
 \dx\omega^{\mathrm{S,0}}(X,Y,Z)&=& \omega^{\mathrm{S,0}}(T^{{D^{**}}}(X,Y),Z)+\omega^{\mathrm{S,0}}(T^{{D^{**}}}(Y,Z),X)+\omega^{\mathrm{S,0}}(T^{{D^{**}}}(Z,X),Y) \ .
 \end{eqnarray*}
 The result follows by Bianchi identity since $\na^{^M}$ is torsion-free.
\end{proof}
On the other hand $J^{\mathrm{S,\psi}}$ is integrable if $R^M=0$ and $\psi$ is a constant. In sum, $TM$ is both complex analytic and symplectic, i.e. a K\"ahlerian manifold, if and only if $M$ is flat and $\varphi_1,\varphi_2$ are constants. 

It is however possible to find subspaces of $TM$ which are complex analytic for other classes of $M$. The equation of integrability led us to that discovery, in \cite{Alb2014c}. 

In any dimension, the unique solutions are the tangent disk bundles $D_{r_0}M\lrr M$ of real space forms of sectional curvature $\kappa$ with any squared-radius $r_0\in\R^+$ and metric satisfying $\psi$ constant. For a complete metric, $r_0=+\infty$ when $\kappa>0$.  Such disk bundles are, moreover, K\"ahlerian; by the proposition above we take $\overline{\psi}=0$. The family of metrics, with a controlled parameter $c_1$, is given by
\begin{equation}
 \mg_{_{M,DM}}=\sqrt{c_1+\kappa r}\,\pi^*g_{_M}\oplus\frac{1}{\sqrt{c_1+\kappa r}}\,{\pi^\star}g_{_{TM}}
\end{equation}
where $r_0=-c_1/\kappa$ if $\kappa<0$ or $+\infty$ otherwise. We refer the reader to \cite{Alb2014c} for details. Again a local holonomy unitary subgroup is found through the results of section \ref{TRctatzs}. In general, regarding the Hermitian or unitary holonomy of $\mg_{_{M,TM}}$ and the corresponding Levi-Civita connection $\na^{^{M,TM}}$, the zero section $O_M$ can only tell us about the whole manifold geometry when we have also $J^{\mathrm{S,\psi}}$ parallel. 

The flat base space has $TM$ with Riemannian holonomy group $\SO(2m)$ if $\varphi_1'(0)\neq0$. The holonomy is trivial if both weight functions are constant, cf. proposition \ref{holonomiaflat2}.

\subsection{Metrics with $\gdois$ holonomy}

In this section $M$ denotes an oriented Riemannian 4-manifold.

In \cite{Alb2014b} the author studied some generalizations of the metrics of Bryant-Salamon (\cite{BrySal}) on the vector bundle $E=\Lambda^2_\pm T^*M$ of self-dual and anti-self-dual 2-forms on $M$. Recall those metrics were originally found on $\Lambda_-^2$ for positive scalar curvature self-dual Einstein manifolds, essentially $S^4$ and $\C\Proj^2$. A change of orientation is not a restraint, but in working with $\Lambda^2_+$ one finds a perfect mirror construction for negative scalar curvature, and thus finds an unknown number of new examples of Riemannian 7-manifolds with $\gdois$ holonomy. In particular for the Einstein base $M=\calH^4$ and $\calH^2_\C$, respectively, the real and complex hyperbolic spaces.

Pure identity is not perfection and hence, when trying to find the holonomy subgroup of $\gdois$, the Lie theory for those \textit{new} symmetric spaces does not apply. As proved by the author in \cite{Alb2014b}, the arguments of Bryant-Salamon cannot be reproduced for $\Scal^M<0$. Our main purpose here is therefore to give a general proof that, for certain spaces, including the original, the holonomy groups are the whole $\gdois$ Lie group. The study includes the scalar flat base case, which yields a different conclusion.

Similar metrics on vector bundle manifolds with $\gdois$ holonomy were also found by  G. W. Gibbons, D. N. Page and C. N. Pope in \cite{GibbPagePope}. Indeed this reference deals with bundles over $S^4$ and $\C\Proj^2$, but does not see the $\Scal^M\leq0$ cases. To the best of our knowledge, the same is true for all recent research in the field.

Let us recall the manifold $E=\Lambda^2_\pm T^*M$ and its metric $\mg_\phi$ in general. We shall not be so focused on the $\gdois$ structure, the 3-form $\phi$, which determines the metric. 

Given an oriented orthonormal frame $\{e^4,e^5,e^6,e^7\}$ of $T^*M$ on an open subset, we have a frame on $E$ on the same open subset defined by\footnote{Everywhere possible, we omit the $\pm$ which is attached to each 2-form and vector bundle.}:
\begin{equation}\label{selfdualformsandantiselfdualforms}
 e^1=e^{45}\pm e^{67}\ ,\qquad e^2=e^{46}\mp e^{57}\ ,\qquad e^3=e^{47}\pm e^{56}\ .
\end{equation}
One may now carefully check that the metric $g_{_E}$ on the vector bundle is the unique such that $\{e^1,e^2,e^3\}$ constitute an orthonormal frame\footnote{The $e^i,\ i=1,2,3$, have norm 2 for the usual metric on 2-forms, but indeed it is $\frac{1}{2}$ of this that is used in \cite{Alb2014b}. In particular, the notation here for $r$ refers to the \textit{half} squared radius $r$ mentioned there.}. 

The metrics we are interested, as explained above, come in pairs. Let us assume that, when $\Lambda^2_-$ is considered, then the base manifold $M$ is Einstein and self-dual (i.e. has vanishing anti-self dual Weyl tensor $W_-=0$). And when we refer to $\Lambda^2_+$, the base manifold is Einstein anti-self-dual ($W_+=0$). The vector bundles $E$ inherit a metric connection $\na$ from the Levi-Civita connection $\na^M$, which of course commutes with the Hodge star operator.

Finally the desired metric on $E$ is a spherically symmetric metric $\mg_{_{M,E}}$ with certain weight functions, cf. \cite{Alb2014b,BrySal}:
\begin{equation}
 \mg_\phi=\mg_{_{M,E}}=\sqrt{2\ctildezero^2sr+\ctildeum}\,\pi^*g_{_M}\oplus 
 \frac{\ctildezero^2}{\sqrt{2\ctildezero^2sr+\ctildeum}}\,\pi^\star g_{_E}
\end{equation}
where $\ctildezero,\ctildeum>0$ are constants and $s=\frac{1}{12}\Scal^M$. There are clearly two degrees of freedom in this metric, concerning the conformal changes on $M$ and on the fibres of $E$, which are easy to normalize. Nevertheless, we shall keep the two constant indeterminants $\ctildezero,\ctildeum$ until the end.

Recall we already know the metric $\mg_\phi$ has holonomy in $\gdois$ (i.e. $\phi$ is parallel for the Levi-Civita connection itself induces). Implicitly there is the metric connection $\na$ for $g_{_E}$. One close look at the defining equations in \cite{Alb2014b} will prove the horizontal subspace is defined by \eqref{canonicaldecompo},\eqref{projeccaoprincipal}.

Now we must abide to one more little detail, because for $\Scal^M<0$ of course the definitions are not consistent. This is easily solved by restricting the study to the open disk bundle of radius $\sqrt{r_0}$, where $r_0=-\ctildeum/2s$,
\begin{equation}
 D_{r_0,\pm}M=\{ e\in E:\ \|e\|_{_E}^2<r_0\}\ .
\end{equation}

The spherically symmetric metric weights are given by
\begin{equation}
 \varphi_1(r)=\frac{1}{4}\log(2\ctildezero^2 sr+\ctildeum)\qquad\qquad
\varphi_2(r)=-\frac{1}{4}\log(2\ctildezero^2 sr+\ctildeum)+\log\ctildezero\ .
\end{equation}
Then
\begin{equation}
 e^{2\varphi_1(0)}=\ctildeum^\frac{1}{2}\qquad\qquad\quad
 e^{2\varphi_2(0)}=\ctildezero^2\ctildeum^{-\frac{1}{2}}
\end{equation}
\begin{equation}
 \varphi_1'(r)=\frac{\ctildezero^2s}{2(2\ctildezero^2sr+\ctildeum)}\qquad\qquad
\varphi_2'(r)=-\frac{\ctildezero^2s}{2(2\ctildezero^2 sr+\ctildeum)}
\end{equation}
and
\begin{equation}
 \varphi_1'(0)=\frac{\ctildezero^2s}{2\ctildeum}=-\varphi_2'(0)\ .
\end{equation}
As it is becoming apparent, we shall indeed use duely the zero section. Regarding the famous coefficients from theorem \ref{TeoremaformuladeCcoeficientes}, defined by $a=2\varphi_1',\ b=2\varphi_2'=-c_2,\ c_1=-2\varphi_1'e^{2\varphi_1-2\varphi_2}$, we find at 0
\begin{equation}
   a=-b=c_2=\frac{\ctildezero^2s}{\ctildeum}\qquad c_1=-s \ .
\end{equation}

It is known the special integrable geometry of $\gdois$ is Ricci flat. In our case this may be confirmed by corollary \ref{curvaturasdeRiccieescalarcorolario}, in particular through formula \eqref{curvaturasdeRiccieescalarEinstein}. Indeed, we have both
\begin{equation}
 2b(1-k)-am=-4(b+a)=0
\end{equation}
and
\begin{equation}
 \lambda^{^M}e^{-2\varphi_1}-ake^{-2\varphi_2}=3s\ctildeum^{-\frac{1}{2}}
           -3\frac{\ctildezero^2s}{\ctildeum}\ctildezero^{-2}\ctildeum^{\frac{1}{2}}=0\ .
\end{equation}
We now write our main result for the metrics $\mg_\phi$ on $\Lambda^2_-T^*M$ if $s>0$ and $D_{r_0,+}M$ if $s<0$.
\begin{teo}\label{AholonomiaG2paratodoosnaonulo}
 For $s\neq0$, the holonomy group of $\mg_\phi$  is the Lie group $\gdois$.
\end{teo}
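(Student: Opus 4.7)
Since the Bryant--Salamon type construction in \cite{Alb2014b} endows $E$ with a parallel $\gdois$-structure, the reduced holonomy $\mathrm{Hol}^0(\mg_\phi)$ is already contained in $\gdois$. The plan is to prove the reverse inclusion by showing that $\mathrm{Hol}^0$ acts irreducibly on $T_oE$ at some $o\in O_M$, and then invoking the classification of proper closed connected subgroups of $\gdois$: every such subgroup is contained in one of the two maximal subgroups $\SU(3)$ or $\SO(4)$, which preserve, respectively, a line in $\R^7$ or an orthogonal $3+4$ splitting of $\R^7$. Thus it is enough to rule out both a parallel line field on $E$ and a parallel $3+4$ orthogonal splitting of $TE$.

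For the parallel line, Theorem \ref{parallelvectorfieldsonEandM} says that any $\mg_\phi$-parallel vector field restricts over the totally geodesic zero section $O_M$ to the orthogonal sum of a parallel vector field $Y_0$ on $M$ and a parallel section $e$ of $E=\Lambda^2_\pm T^*M$. Since $M$ is Einstein with $\ric^M=3s\,g_{_M}$ and $s\neq0$, the identity $\ric^M(Y_0,Y_0)=3s\|Y_0\|^2$ kills $Y_0$. A non-zero parallel section $e$ would be a parallel 2-form on $M$ of the prescribed duality type; combined with the Einstein and $W_\mp=0$ conditions, this would force a compatible K\"ahler structure on $M$, which is inconsistent with the cases at hand ($S^4$ and $\calH^4$ admit no K\"ahler structure; for $\C\Proj^2$ and $\calH^2_\C$ the K\"ahler form lies in the duality bundle \emph{opposite} to the $E$ under consideration, and any other hypothetical parallel 2-form would conflict with the irreducibility of these symmetric spaces).

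For the $3+4$ splitting, any $\mathrm{Hol}^0$-invariant orthogonal decomposition $T_oE=V_3\oplus V_4$ would force every curvature endomorphism $R^{\mg_{_{M,E}}}(X,Y)\in\sol(T_oE)$ to be block-diagonal in this splitting. Yet in the canonical horizontal--vertical splitting the off-diagonal curvature operators given by \eqref{holonomia2} contain the term $2\varphi_1'e^{2\varphi_1}(X^h)^\flat\otimes(Y^v)^\flat$, which does not vanish because $2\varphi_1'(0)=\ctildezero^2 s/\ctildeum\neq0$ when $s\neq0$; as $X^h,Y^v$ range the corresponding rank-one maps span all of $\mathrm{Hom}(T_xM,E_x)$. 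Hence the canonical splitting is not preserved. In the homogeneous examples, the lifted isometry group $\mathrm{Isom}(M)\hookrightarrow\mathrm{Isom}(E)$ of Corollary \ref{Isometriesarenaturallylifted} preserves the canonical splitting and acts transitively on $O_M$ with irreducible isotropy on each fibre, so any would-be $\mathrm{Hol}^0$-invariant $3+4$ decomposition at $o$ must coincide with the canonical one, which we have just excluded. In the general (non-homogeneous) anti-self-dual Einstein case one completes the argument via the Ambrose--Singer theorem applied to the explicit expressions of Theorem \ref{curvatura11}, checking that the operators of types \eqref{holonomia1}--\eqref{holonomia3}, together with iterated brackets, already span the full $14$-dimensional $\mathfrak{g}_2\subset\sol(7)$.

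The main obstacle is exactly this last verification beyond the symmetric-space setting: the generators of type \eqref{holonomia2} produce, via Lie brackets, all of $\sol(4)\oplus\sol(3)$ inside $\sol(T_oE)$, but to conclude that $\hol^{\mg_\phi}=\mathfrak{g}_2$ one must control how much of $\mathfrak{g}_2$ is thereby reached and make essential use of the cross-term in \eqref{holonomia2} and the vertical term \eqref{holonomia3}, both of which are non-zero precisely because $s\neq0$. The specific form of $R^M$ on Einstein (anti-)self-dual 4-manifolds (reducing to the scalar and surviving Weyl part) and of $R^E$ on $\Lambda^2_\pm T^*M$ then supplies the remaining generators, completing the proof.
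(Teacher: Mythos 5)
Your proposal does not close the argument; the decisive step is missing and one of its structural claims is wrong. First, the classification you invoke is incorrect: it is not true that every proper closed connected subgroup of $\gdois$ lies in $\SU(3)$ or $\SO(4)$ --- there is also the principal $\SO(3)\subset\gdois$ acting irreducibly on $\R^7$, which preserves neither a line nor a $3+4$ splitting. So even if you had excluded a parallel line field and a parallel $3+4$ splitting, you could not yet conclude $\mathrm{Hol}=\gdois$ without extra input (Berger's list, or the theorem of Bryant that the paper itself quotes later for the $S^4$ case, namely that holonomy contained in $\gdois$ with no parallel vector field must be all of $\gdois$). Second, your exclusion of an invariant $3+4$ splitting is not established: showing that the \emph{canonical} horizontal--vertical splitting is not curvature-invariant says nothing about other splittings, and the appeal to the lifted isometries of Corollary \ref{Isometriesarenaturallylifted} does not help, since a holonomy-invariant decomposition need not be invariant under isometries; the ``irreducible isotropy'' step is not an argument. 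You also assert that the off-diagonal operators \eqref{holonomia2} span all of $\mathrm{Hom}(T_xM,E_x)$; this is false --- because of the $R^E$ cross-term they span only an $8$-dimensional subspace of the $12$-dimensional $\mathrm{Hom}(T_xM,E_x)$, which is exactly what the paper computes.

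Most importantly, in your last paragraph you yourself defer the general (and in fact the actual) content of the theorem --- that the curvature operators of types \eqref{holonomia1}--\eqref{holonomia3} at a zero-section point generate a $14$-dimensional subalgebra --- to an unspecified ``verification''. That verification \emph{is} the paper's proof: using $\rho^i=\mp s e^i$ (Einstein plus (anti-)self-duality), one shows the operators $R^{\mg_\phi}(e^h_k)$ and $R^{\mg_\phi}(\pi^\star e^i,\pi^\star e^j)$ are proportional and give a $3$-dimensional space, the $R^{\mg_\phi}(e^h_{\bar k})$ give another $3$-dimensional space (non-zero precisely because $s\neq0$), and the mixed operators $R^{\mg_\phi}(e_\alpha,\pi^\star e^i)$ give an $8$-dimensional space (via the explicit rank-$2$ matrices), totalling $14=\dim\gdois$, with no case distinction between homogeneous and non-homogeneous bases. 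Finally, your exclusion of a parallel section of $E$ is argued only case-by-case for the four model spaces and with partly incorrect reasons (e.g.\ $\calH^4$, being contractible, does admit almost complex structures; what rules out a parallel $2$-form there is the holonomy $\SO(4)$ of the hyperbolic metric), whereas the theorem concerns any Einstein (anti-)self-dual base with $s\neq0$; the clean general argument would be that a parallel section of $\Lambda^2_\pm T^*M$ of constant norm forces a K\"ahler structure whose orientation makes $W_\pm\neq0$ when $s\neq0$, contradicting $W_\pm=0$. As it stands, the proposal reduces the theorem to the very computation it does not carry out.
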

\begin{proof}
 We must recall the decomposition of the curvature tensor of 4-manifolds under the Lie algebra isomorphism $\sol(4)=\sol(3)\oplus\sol(3)$. A reference is \cite{Besse}; we recall the lines of \cite{Alb2014b}. The symmetric operator on 2-forms defined by 
  \begin{equation}\label{curvaturaFourManifolds}
  \langle {\cal R}(e_\alpha\wedge e_\beta),e_\gamma\wedge e_\delta\rangle_{_M}=-\langle R^M(e_\alpha,e_\beta)e_\gamma,e_\delta\rangle_{_M}=R^M_{\alpha\beta\gamma\delta} 
     \end{equation}
 gives rise to an irreducible decomposition respecting $\Lambda_+\oplus\Lambda_-$
 \begin{equation}
 {\cal R}=\left[\begin{array}{cc}
          W_++s1_3 & \ric_0 \\ {\ric_0}^\dag & W_-+s1_3     \end{array}\right]\ .     
 \end{equation}
 The Weyl tensor is $W=W_++W_-$ where $W_\pm$ are traceless. The map $\ric_0$ is the traceless part of $\ric^{g_M}$. It follows $s=\frac{1}{12}\Scal^M$.
 
 Now the curvature of the vector bundle $E$ is given in the frame $p=(e^1,e^2,e^3)$ by
 \[ R^Ep=p\rho\qquad\quad\mbox{where}\qquad\rho= \left[\begin{array}{ccc}
              0 & -\rho^3 &\rho^2 \\   \rho^3& 0 &-\rho^1 \\ -\rho^2 &\rho^1 &0
              \end{array} \right]   \ .      \]
 In other words
 \begin{equation}\label{curvaturaselfdualeantiselfdual}
 R^Ee^i=\rho^ke^j-\rho^je^k,\  \quad\forall\ \mbox{cycle}\ (ijk)=(123)\ .
 \end{equation}
 We may write again $\rho$, more precisely each $\rho^i_+$ and $\rho^i_-$, as a linear combination of the self-dual and anti-self dual 2-forms. Taking a dual frame of the $e^4,e^5,e^6,e^7$, we get respective 2-vectors $e_1,e_2,e_3$, which verify $e^i_\pm(e_{\pm,j})=2\delta^i_j,\ e^i_\pm(e_{\mp,j})=0$,\ $\forall i,j=1,2,3$. Careful computations, cf. \cite{Alb2014b}, yield:
 \[ \rho_+^i(e_{+,j})=-{\cal R}_{ij}\qquad
    \rho_\pm^i(e_{\mp,j})=\mp{\cal R}_{i\bar{j}}  \qquad 
    \rho_-^i(e_{-,j})=+{\cal R}_{\bar{i}\bar{j}}  \qquad\forall i,j=1,2,3 .  \]
 Notice, for instance, ${\cal R}_{ij}=\langle{\cal R}e_i,e_j\rangle_{_M}$ is the extension through \eqref{curvaturaFourManifolds} with the metric $g_{_M}$. If $M$ is Einstein, equivalently if $\ric_0=0$, then $s$ is a constant.
 
 As said above, $M$ is self-dual if $W=W_+$. Self-dual and Einstein is the same as
\begin{equation}\label{selfdualcurvatureequation}
 W_-=0\qquad\Longleftrightarrow\qquad\ \rho^i_-=se^i_-\ ,\quad \forall i=1,2,3\ .
\end{equation}
 Anti-self-duality corresponds to $W=W_-$. Together with the Einstein condition, that implies $\rho^i_+=-se^i_+$. All together, the hypothesis are henceforth that $\rho^i=\mp se^i$.
 
 Now we are ready for the computation of the dimension of the holonomy Lie algebra. Indeed we just have to prove $\dim\hol^{\mg_{_{M,E}}}=14$, since 14 is the dimension of $\gdois$. Theorem \ref{curvatura11} leads to the answer. By formulas (\ref{holonomia1}--\ref{holonomia3}) and subsequent observations, the holonomy is generated by those $\sol(7)$-type matrices. Let us recall, in the corresponding order and introducing the weights,
 \begin{equation*}
 R^{\mg_\phi}(X^h,Y^h)=\left[\begin{array}{cc} \ctildeum^\frac{1}{2}R^M(X^h,Y^h)&0 \\ 0 &\ctildezero^2\ctildeum^{-\frac{1}{2}}R^E(X^h,Y^h)
 \end{array}\right]
\end{equation*}
\begin{equation*}
 R^{\mg_\phi}(X^h,Y^v)=\left[\begin{array}{cc} 0&-B \\B^\dag& 0
 \end{array}\right]
 \end{equation*}
 with $B(X^h,Y^v)=a\ctildeum^\frac{1}{2}(X^h)^\flat\otimes(Y^v)^\flat+ 
 \frac{1}{2}\ctildezero^2\ctildeum^{-\frac{1}{2}}\langle R^E(X^h,\ )Y^v,\ \rangle_{_E}$
and
\begin{equation*}
    R^{\mg_\phi}(X^v,Y^v)=  \left[\begin{array}{cc}
 \ctildezero^2\ctildeum^{-\frac{1}{2}}\langle R^E(\ ,\ )X^v,Y^v\rangle_{_E} &0\\ 0& 2b\ctildezero^2\ctildeum^{-\frac{1}{2}}(X^v)^\flat\wedge(Y^v)^\flat  \end{array}\right]
\end{equation*}
where $B^\dag$ is the adjoint endomorphism of $B$. 
 
Notice we may consider the horizontal lift of $e_i$ as well as the vertical lift of $e^i$, which we have denoted by $\pi^\star e^i$. Recall $\langle e^i,e^j\rangle_{_E}=\frac{1}{2}\langle e^i,e^j\rangle_{_M}=\delta_{ij},\ \forall i,j=1,2,3$. We then conclude various identities on the manifold $E$. First, in coherence with the above,
\[ \frac{1}{2}\langle R^M(e_k),e_i\rangle_{_M}=-\frac{1}{2}{\cal R}_{ki}= 
   \pm\frac{1}{2}\rho^k_\pm(e_i)=-s\delta_{ik} \]
and hence $R^M(e_k)=-se^k$ (the horizontal lift, the pullback).
Second, from \eqref{curvaturaselfdualeantiselfdual} and in positive order $(ijk)$,
\[ \langle R^E(e_k)\pi^\star e^i,\pi^\star e^j\rangle_{_E}=\rho^k(e_k)=\mp2s=
     \mp2s(\pi^\star e^i\wedge\pi^\star e^j)(e_i,e_j)   \ . \]
Finally, the orthogonal maps $R^{\mg_\phi}(e_k^h)$ and $R^{\mg_\phi}(\pi^\star e^i,\pi^\star e^j)$ are equal, respectively, to
\begin{equation*}
\left[\begin{array}{cc} -\ctildeum^\frac{1}{2}se^k&0 \\ 0 &\mp2s\ctildezero^2\ctildeum^{-\frac{1}{2}}\pi^\star e^i\wedge\pi^\star e^j
 \end{array}\right]\quad\mbox{and}\quad\left[\begin{array}{cc}
 \mp\ctildezero^2\ctildeum^{-\frac{1}{2}}se^k &0\\ 0& -2s\ctildezero^4\ctildeum^{-\frac{3}{2}}\pi^\star e^i\wedge\pi^\star e^j  \end{array}\right]\ .
\end{equation*}
 In sum,
 \[ \pm\frac{\ctildezero^2}{\ctildeum}R^{\mg_\phi}(e_k^h)=R^{\mg_\phi}(\pi^\star e^i,\pi^\star e^j)\  \]
 and we have proved all these 6 maps generate a 3-dimensional subspace. 
 
 Certainly there is another 3-dimensional subspace of maps, non-vanishing just in the $4\times 4$-square, generated by the $R^{\mg_\phi}(e_{\bar{k}}^h)$. They refer to $W_\mp+s1_3$ and do not vanish because $s\neq0$. (And notice \textit{sometimes} $W_+$ or $W_-$ do not vanish either.)
 
 We are left to prove the $R^{\mg_\phi}(X^h,Y^v)$ generate an 8-dimensional subspace. Let us take any $\alpha=4,5,6,7$. Letting $\theta^i=\langle\pi^\star e^i,\ \rangle_{_E}$, we have:
 \begin{align*}
 \lefteqn{ R^{\mg_\phi}(e_\alpha,\pi^\star e^i,Z^h,W^v)=} \\
 & = a\ctildeum^\frac{1}{2}\langle e_\alpha,Z^h\rangle_{_M}\langle\pi^\star e^i,W^v\rangle_{_E}  +\frac{1}{2}\ctildezero^2\ctildeum^{-\frac{1}{2}} 
  \langle R^E(e_\alpha,Z^h)\pi^\star e^i,W^v\rangle_{_E} \\
  &= \frac{\ctildezero^2s}{2\ctildeum^{\frac{1}{2}}}\bigl(2e^\alpha(Z^h)\langle \pi^\star e^i,W^v\rangle_{_E}\mp e^k(e_\alpha,Z^h)\langle\pi^\star e^j,W^v\rangle_{_E}\pm e^j(e_\alpha,Z^h)\langle\pi^\star  e^k,W^v\rangle_{_E}\bigr)\ .
 \end{align*}
 Hence
 \[  R^{\mg_\phi}(e_\alpha,\pi^\star e^i)=\frac{\ctildezero^2s}{2\ctildeum^{\frac{1}{2}}}
      \bigl(2e^\alpha\wedge \theta^i\mp e_\alpha\lrcorner e^k\wedge\theta^j\pm e_\alpha\lrcorner e^j\wedge\theta^k\bigr) \ .  \]
  Computing case by case we get four clearly linearly independent families of three \textit{similar} 2-forms. Writting $V_{\alpha i}=e^\alpha\wedge\theta^i$, we get for instance
 \[ \begin{cases}
       R^{\mg_\phi}(e_4,\pi^\star e^1)=2V_{41}\mp V_{72}\pm V_{63} \\
       R^{\mg_\phi}(e_7,\pi^\star e^2)=2V_{72}\mp V_{41}+V_{63} \\
       R^{\mg_\phi}(e_6,\pi^\star e^3)=2V_{63}\pm V_{41}+V_{72}
    \end{cases}  \ . \]
 These forms are linearly dependent. In fact, all the following matrices, corresponding to the four families, have rank 2:
 \[ \begin{array}{ccc}    
       \left[\begin{array}{ccc}
           2 & \mp 1 & \pm 1 \\ \mp1 & 2& 1 \\ \pm1 & 1 & 2 
        \end{array}\right]  & &  \left[\begin{array}{ccc}
           2 & -1 &  -1 \\ -1 & 2 & -1 \\ -1 & -1 & 2 
     \end{array}\right]  \\
        & & \\
     \left[\begin{array}{ccc}
           2 & 1 &  \mp1 \\ 1 & 2& \pm1 \\ \mp1 & \pm1 & 2 
     \end{array}\right] & & \left[\begin{array}{ccc}
           2 & \pm1 &  1 \\ \pm1 & 2& \mp1 \\ 1 & \mp1 & 2 
         \end{array}\right]     \end{array}       \]
 and therefore the curvature generates a subspace of dimension 8.
\end{proof}

The case $s=0$ implies constant $\varphi_1,\varphi_2$. Let us finally solve this case. Recall the famous K3 surfaces are complex surfaces; with the Calabi-Yau metric, they have a preferred orientation, are Ricci-flat and anti-self-dual. With holonomy equal to $\SU(2)$. By a theorem of C. Lebrun, K3 surfaces and quotients of the 4-torus by finite groups give us all the compact spin Ricci-flat K\"ahler surfaces and hence anti-self-dual 4-manifolds. Anti-self-duality happens necessarily with every scalar flat K\"ahler surface.

The flat case being trivial, we follow on to another foreseeable result.
\begin{teo}
 For any K3 surface $M$, the $\gdois$ metrics on $\Lambda_+^2T^*M$ have holonomy $\SU(2)$. 
\end{teo}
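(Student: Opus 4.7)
The plan is to verify the hypotheses of Proposition \ref{holonomiaflat2}(iii) for a K3 surface $M$ endowed with its Calabi--Yau metric. Such $M$ is hyperk\"ahler with holonomy exactly $\SU(2)$; in particular it is Einstein and Ricci-flat, so $\ric_0 = 0$ and $s = \frac{1}{12}\Scal^M = 0$, and it is anti-self-dual ($W_+ = 0$) with the Calabi--Yau orientation (this last point is the well-known fact that on a K\"ahler surface $W_+$ is essentially proportional to the scalar curvature).

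First I would read off from the definitions of $\varphi_1,\varphi_2$ that $s=0$ forces the weight functions to be constant: $\varphi_1 \equiv \frac{1}{4}\log\ctildeum$ and $\varphi_2 \equiv -\frac{1}{4}\log\ctildeum + \log\ctildezero$. Next I would show that the induced metric connection $\na$ on $E = \Lambda^2_+T^*M$ is flat. Using the curvature-block description already exploited in the proof of Theorem \ref{AholonomiaG2paratodoosnaonulo}, the anti-self-dual Einstein hypothesis gives the analogue identity $\rho^i_+ = -s\,e^i_+$, which vanishes since $s=0$; and the vanishing of $\ric_0$ kills the off-diagonal contributions $\rho^i_+(e_{-,j}) = -\mathcal{R}_{i\bar{j}}$. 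Together these force every curvature 2-form $\rho^i$ to vanish, hence $R^E = 0$.

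With $D^{^E}$ flat and both weight functions constant, Proposition \ref{holonomiaflat2}(iii) immediately gives $\hol^{\mg_\phi} = \hol^M = \SU(2)$. This is consistent with the standard embedding $\SU(2) \subset \gdois$ arising from the decomposition (Calabi--Yau $2$-fold) $\times\,\R^3$, since under the present hypotheses $(E,\mg_\phi)$ is locally isometric to such a product (up to constant rescalings of the two factors).

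I do not anticipate any serious obstacle: the heavy lifting has already been done by the machinery of the preceding theorem. The one point requiring care, though purely algebraic, is the flatness of $\na$ on $\Lambda^2_+T^*M$, which is a direct consequence of the anti-self-dual Ricci-flat hypothesis through the block decomposition of the curvature operator $\mathcal{R}$; equivalently, it reflects the representation-theoretic fact that the $\SU(2)$ holonomy of a K3 acts trivially on the self-dual 2-forms.
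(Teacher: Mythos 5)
Your proposal is correct and follows essentially the same route as the paper: the anti-self-dual Ricci-flat hypothesis with $s=0$ forces constant weights and a flat connection on $\Lambda^2_+T^*M$, after which the flat-bundle curvature formulas (equivalently Proposition \ref{holonomiaflat2}(iii)) reduce the holonomy to $\hol^M=\sul(2)$. The paper's own proof is just a one-line appeal to \eqref{curvaturaflathhh}--\eqref{curvaturaflatvvv}; you have merely made the same argument explicit.
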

\begin{proof}
 Of course we recur to the global formulas (\ref{curvaturaflathhh}--\ref{curvaturaflatvvv}) because $E$ is flat for $s=0$, as we have seen. 
\end{proof}

Let us stress we have completed in theorem \ref{AholonomiaG2paratodoosnaonulo} the proof of \cite[Theorem 2.4.]{Alb2014b}. Now we are completely sure the spaces 
\begin{equation}
 D_{r_0,\pm}{\cal H}^4\qquad\mbox{and}\qquad D_{r_0,+}\calH_\C^2\ ,
\end{equation}
with the metric $\mg_\phi$, have $\gdois$ holonomy. Let us see a topological proof for the Bryant-Salamon metrics. This will give the third independent proof, again suitable only for the positive $\Scal^M$ cases.
\begin{prop}
 The $\gdois$ metric on $\Lambda^2_-T^*S^4$ has holonomy equal to $\gdois$.
\end{prop}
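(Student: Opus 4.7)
I argue by contradiction, assuming $\hol(\mg_\phi) \subsetneq \gdois$. Since $(E,\mg_\phi)$ is simply-connected and complete (the Bryant-Salamon metric being complete by the original construction), de~Rham's decomposition theorem combined with Berger's classification forces $E$ to split globally as a Riemannian product $E = M_1 \times M_2$ with $\dim M_1 + \dim M_2 = 7$ and both factors of positive dimension. Indeed, in dimension $7$ the only irreducible non-symmetric holonomy groups are $\SO(7)$ and $\gdois$, and the $7$-dimensional irreducible Riemannian symmetric spaces ($S^7$ and its non-compact dual) have holonomy $\SO(7)$, not contained in $\gdois$. This splitting provides a nontrivial parallel sub-bundle $V = TM_1 \subset TE$ of some rank $k \in \{1,\ldots,6\}$.

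The large isometry group of the base then rigidly determines $V$ on the zero section. By Corollary \ref{Isometriesarenaturallylifted}, $\mathrm{Isom}(S^4) = \SO(5)$ lifts to act by isometries on $E$, transitively on $O_M \cong S^4$ with isotropy $\SO(4)$ at each point. Since $7$ is odd, the two de~Rham factors have distinct dimensions, so each is $\SO(5)$-invariant; hence so is $V$. At $x \in O_M$ one has $T_xE = T_xS^4 \oplus \Lambda^2_-T^*_xS^4 \cong \R^4 \oplus \R^3$, where the isotropy $\SO(4)$ acts irreducibly on each summand (standard representation on $\R^4$; through $\SO(4) \to \SO(3)$ on the anti-self-dual $\R^3$). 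These irreducibles are non-isomorphic (they have different dimensions), so the only proper $\SO(4)$-invariant subspaces are the two summands themselves. Thus $V|_x$ must equal $\calH|_x$ (if $k=4$) or $\calV|_x$ (if $k=3$), and the ranks $k \in \{1,2,5,6\}$ are excluded at once.

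To close the argument I apply Theorem \ref{curvatura11}. Parallelism of $V$ requires $R^{\mg_\phi}_o(X, Y)\,V|_o \subset V|_o$ at every $o \in O_M$. For unit vectors $X \in T_xS^4$ and $Y \in \Lambda^2_-T^*_xS^4$, using $R^E(X, X) = 0$,
\begin{equation*}
  R^{\mg_\phi}_o(X^h, Y^v, X^h, Y^v) \;=\; a\,e^{2\varphi_1(0)} \;=\; \frac{\ctildezero^2 s}{\sqrt{\ctildeum}} \;\neq\; 0,
\end{equation*}
since $a|_0 = 2\varphi_1'(0) = \ctildezero^2 s/\ctildeum$ and $s > 0$ on $S^4$. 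Thus $R^{\mg_\phi}_o(X^h, Y^v)$ sends $X^h$ to a vector with nonzero $Y^v$-component, so it preserves neither $\calH|_o$ nor $\calV|_o$ (which, being orthogonal complements under a skew-symmetric curvature operator, are preserved together). This contradicts parallelism of $V$ and forces $\hol = \gdois$.

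\textbf{Main obstacle.} The subtlest step is the first, where one must justify the global Riemannian product structure: this rests on completeness of the Bryant-Salamon metric and on verifying that no $7$-dimensional irreducible Riemannian symmetric space has holonomy inside $\gdois$. Once this reduction is established, the $\SO(5)$-symmetry pins $V|_{O_M}$ to one of two canonical distributions, and Theorem \ref{curvatura11} delivers the contradiction in a single line.
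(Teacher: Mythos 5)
Your strategy is sound and genuinely different from the paper's. The paper's own proof is much shorter and purely topological: it invokes a theorem of Bryant (\cite{Bryant1}) saying that a metric with holonomy contained in $\gdois$ and admitting no parallel vector field has holonomy equal to $\gdois$, and then excludes parallel vector fields via theorem \ref{parallelvectorfieldsonEandM}: such a field would restrict over the zero section to a parallel vector field of $S^4$ plus a parallel (hence nowhere-vanishing, non-degenerate) anti-self-dual $2$-form, both impossible on $S^4$ since $\chi(S^4)\neq0$ and $S^4$ carries no almost complex structure. You instead exclude every proper subgroup directly: Berger's list plus the absence of $7$-dimensional irreducible symmetric spaces with holonomy inside $\gdois$ reduces to the reducible case, which yields a parallel distribution; the lifted $\SO(5)$-action of corollary \ref{Isometriesarenaturallylifted} pins its fibre at a zero-section point to $\calH_o$ or $\calV_o$; and the single component $R^{\mg_\phi}_o(X^h,Y^v,X^h,Y^v)=a\,e^{2\varphi_1(0)}=\ctildezero^2 s/\sqrt{\ctildeum}\neq0$ from theorem \ref{curvatura11} kills both. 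This is closer in spirit to theorem \ref{AholonomiaG2paratodoosnaonulo}, with the isotropy representation replacing the $14$-dimensional count; what the paper's route buys is brevity (Bryant's theorem does the heavy lifting), what yours buys is independence from that citation, at the cost of Berger, de~Rham and the completeness of the Bryant--Salamon metric.

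One step needs repair. The claim ``since $7$ is odd the two de~Rham factors have distinct dimensions, so each is $\SO(5)$-invariant'' is not rigorous as stated: the two-factor splitting you extract from reducibility is not canonical, and the canonical de~Rham decomposition may have three or more factors, possibly of equal dimension (e.g.\ $2+2+3$), which an isometry is only guaranteed to permute. The standard fix: work with the canonical decomposition of $T_oE$ into the flat part and the irreducible holonomy modules; the isotropy $\SO(4)\subset\SO(5)$ at $o$ permutes these factors, and being connected it preserves each of them (the permutation is locally constant near the identity). Since the isotropy representation on $T_oE=\R^4\oplus\R^3$ has exactly two invariant proper subspaces, every canonical factor --- and hence the fibre of the parallel distribution you use --- must be $\calH_o$ or $\calV_o$, after which your curvature computation applies verbatim. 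Note also that you do not actually need the global product (hence not completeness) to produce the parallel distribution: a holonomy-invariant subspace already parallel-transports to one; completeness only enters if you insist on the splitting $E=M_1\times M_2$, and it is needed in any case to pass from ``locally symmetric'' to the classification of symmetric spaces only at the level of restricted holonomy, which is harmless here.
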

\begin{proof}
 A theorem in \cite{Bryant1} assures that if the holonomy group is contained in $\gdois$, which is the case, and the metric does not admit parallel vector fields, then the subgroup coincides with the whole group. Now if $E$ had a parallel vector field for the $\gdois$ metric, then this would restrict on the zero section $O_M$ to the sum of a parallel vector field and a parallel section, by theorem \ref{parallelvectorfieldsonEandM}. These fields would have constant norm. But it is well-known that $S^4$ does not have non-vanishing vector fields, nor it admits a non-degenerate 2-form field (an almost-complex structure). Of course every self or anti-self-dual 2-form is a non-degenerate 2-form. 
\end{proof}
Analogous result follows for $\C\Proj^2$, because it does not admit a non-vanishing vector field, nor a K\"ahler structure compatible with the Fubini-Study metric and inducing the reversed orientation. It is well-known that $\overline{\C\Proj}^2$ is not even a complex manifold.

\vspace{1.5cm}

%\medskip

\

\textbf{R. Albuquerque}

{\texttt{\textbf{rpa@uevora.pt}}}

Centro de Investiga\c c\~ao em Mate\-m\'a\-ti\-ca e Aplica\c c\~oes

Rua Rom\~ao Ramalho, 59

671-7000 \'Evora, Portugal
\vspace{1mm}\\
\begin{small}The research leading to these results has received funding from the People Programme (Marie Curie Actions) of the European Union's Seventh Framework Programme (FP7/2007-2013) under REA grant agreement n${}^\circ$ PIEF-GA-2012-332209.\end{small}

\end{document}